\theoremstyle{plain}
\numberwithin{equation}{section}
\newtheorem{theorem}{Theorem}[section]
\newtheorem{lemma}[theorem]{Lemma}
\newtheorem{corollary}[theorem]{Corollary}
\newtheorem{proposition}[theorem]{Proposition}
\title[Action of $\mathbb{R}$-Fuchsian groups on $\mathbb{P}_\mathbb{C}^n$]{Action of $\mathbb{R}$-Fuchsian groups on $\mathbb{P}_\mathbb{C}^n$}
\author{W. Barrera}
\address{Facultad de Matem\'aticas, Universidad Aut\'onoma de Yucat\'an, Anillo Perif\'erico Norte Tablaje Cat, M\'erida, 13615, Yucat\'an, M\'exico.}
\email{bvargas@correo.uady.mx}
\author{E. Montiel}
\address{Instituto de Matem\'aticas, UNAM, Av. Universidad, Cuernavaca, 62210, Morelos, M\'exico.}
\email{eduardo.montiel@im.unam.mx}
\author{J.P. Navarrete}
\address{Facultad de Matem\'aticas, Universidad Aut\'onoma de Yucat\'an, Anillo Perif\'erico Norte Tablaje Cat, M\'erida, 13615, Yucat\'an, M\'exico.}
\email{jp.navarrete@correo.uady.mx}
\subjclass{Primary 51M10,22E40}
\keywords{Complex hyperbolic spaces, limit set, complex projective space}
\begin{document}

\begin{abstract}
We consider discrete subgroups of the group of  orientation preserving isometries of the $m$-dimensional hyperbolic space, whose limit set is a $(m-1)$-dimensional real sphere, acting on the $n$-dimensional complex projective space for $n\geq m$, via an embedding from the group of orientation preserving isometries of the $m$-dimensional hyperbolic space to the group of holomorphic isometries of the $n$-dimensional complex hyperbolic space. We describe the Kulkarni limit set of any of these subgroups under the embedding as a real semi-algebraic set. Also, we show that the Kulkarni region of discontinuity can only have one or three connected components. We use the Sylvester's law of inertia when $n=m$. In the other cases, we use some suitable projections of the the $n$-dimensional complex projective space to the $m$-dimensional complex projective space.   
\end{abstract}

\maketitle

\section{Introduction}\label{sec1}
 Let $\Gamma$ be a discrete subgroup of SO$_+(m,1)$, the group of isometries that preserves orientation of the hyperbolic space of dimension $m$, we know that the limit set of $\Gamma$ is contained in $\mathbb{S}^{m-1}$, see \cite{cano2013complex,kapovich2008kleinian}. Here, we are taking the limit set of a discrete subgroup $\Gamma \subset \mathrm{SO}_+(m,1)$ just as the accumulation set in the sphere $\mathbb{S}^{m-1}$ of an orbit $\Gamma \cdot \mathbf{z}$, where $\mathbf{z} \in \mathbb{H}_{\mathbb{R}}^{m}$, see \cite{cano2013complex,kapovich2008kleinian}. We are interested in discrete subgroups $\Gamma \subset \mathrm{SO}_+(m,1)$ whose limit set is the whole sphere $\mathbb{S}^{m-1}$. In the remainder of this article all the discrete subgroups we will consider are of this kind, examples of such groups are given by lattices. Given $\Gamma$ a discrete subgroup of SO$_+(m,1)$ we can obtain a complex Kleinian group $\iota(\Gamma)$ whose limit set in the sense Chen and Greenberg, see \cite{chen1974hyperbolic}, is a $(m-1)$-dimensional real sphere, by the embedding 
\begin{gather} \label{eq:0}
 \iota: \mathrm{SO}_+(m,1)  \rightarrow \mathrm{SU}(n,1)\\
  G= \left(\begin{matrix} \notag
 A  & \mathbf{b}\\
\mathbf{c}  & d
\end{matrix}\right)\mapsto \left(\begin{matrix}
 A & 0 & \mathbf{b}\\
0 & I & 0 \\
\mathbf{c} & 0 & d
\end{matrix}\right)
\end{gather} 
where $A$ is the left-upper submatrix of size $m \times m$ of $G$, $\mathbf{c}$ is a row matrix of size $m$, $\mathbf{b}$ is a column vector of size $m$, $d$ is a $1 \times 1$ entry and $I$ is the identity matrix of size $(n-m) \times (n-m)$. By the work of Cano et al. in \cite{cano2017limit} we can compute the Kulkarni limit set (see \cite{kulkarni1978groups}) of $\iota(\Gamma)$ as the union of all complex projective hyperplanes in $\mathbb{P}_\mathbb{C}^n$ which are tangent to the sphere $\partial \mathbb{H}_\mathbb{C}^n$ at points in the Chen-Greenberg limit set, in other words, if we denote by $\Lambda_{Kul}(\iota(\Gamma))$ the Kulkarni limit set of $\iota(\Gamma)$, by $\mathcal{S}$ the $(m-1)$-dimensional real sphere contained in $\partial \mathbb{H}_\mathbb{R}^n$ whose points have the form $[w_1: \cdots :w_m:0:\cdots:0:w_{n+1}]$  and by $H_z$ the complex projective hyperplane tangent to $\partial \mathbb{H}_\mathbb{C}^n$ at the point $z \in \mathcal{S}$ then 
\begin{equation}\notag
\Lambda_{Kul}(\iota(\Gamma))=\underset{z\in\mathcal{S}}{\bigcup}H_{z}.
\end{equation}
We show that in this case $\Lambda_{Kul}(\iota(\Gamma))$ is a real semi-algebraic set and that the complement $\Omega_{Kul}:=\mathbb{P}_\mathbb{C}^n\setminus \Lambda_{Kul}(\iota(\Gamma))$ has three connected components when $m=2$, and is connected in other case, the precise statements are given in Theorems \ref{teo:uno}, \ref{teo:dos}, \ref{teo:tres}. The case $m=n=2$ was studied in \cite{cano2016action} using as main tool the Hermitian cross-product $\boxtimes$ defined in $\mathbb{C}^{2,1}$ by Goldman, the Hermitian cross-product of two vectors $\mathbf{z}$ and $\mathbf{w}$ is a orthogonal vector to both $\mathbf{z}$ and $\mathbf{w}$. The function $f(\mathbf{z}):=\langle i\mathbf{z} \boxtimes \overline{\mathbf{z}}, i\mathbf{z} \boxtimes \overline{\mathbf{z}} \rangle$ allows to characterize $\Lambda$ as a semi-algebraic subset of $\mathbb{P}_\mathbb{C}^2$, moreover the function $f$ gives a partition of $\mathbb{C}^{2,1} \setminus \{\mathbf{0}\}$ that under projection provides a partition of $\mathbb{P}_\mathbb{C}^2$ into $\mathrm{SO}_+(2,1)$-invariants sets. This partition allows to describe the set $\Lambda$ and the connected components of $\Omega$. When we consider  $\Gamma \subset \mathrm{SO}_+(m,1)$ acting on $\mathbb{P}_\mathbb{C}^m$, we find that the Sylvester's law of Inertia is a good replacement for the Hermitian cross-product because if we define the function 
\begin{equation}\notag 
f:= -\bigg\lvert \begin{array}{cc}
\langle\mathbf{z_1},\mathbf{z_1}\rangle & \langle\mathbf{z_1},\mathbf{z_2}\rangle\\
\langle\mathbf{z_2},\mathbf{z_1}\rangle & \langle\mathbf{z_2},\mathbf{z_2}\rangle
\end{array}\bigg\vert        
 \end{equation}
then the Sylvester's law of Inertia allows to describe the set $\Lambda$ as a real semi-algebraic subset of $\mathbb{P}_\mathbb{C}^m$. The function $f$ defined in terms of the determinant of $\left(\begin{array}{cc}
\langle\mathbf{z_1},\mathbf{z_1}\rangle & \langle\mathbf{z_1},\mathbf{z_2}\rangle\\
\langle\mathbf{z_2},\mathbf{z_1}\rangle & \langle\mathbf{z_2},\mathbf{z_2}\rangle
\end{array}\right)$ coincides with the Hermitian product $\langle i\mathbf{z} \boxtimes \overline{\mathbf{z}}, i\mathbf{z} \boxtimes \overline{\mathbf{z}} \rangle$ when $m=2$. For our purposes we just need to know if the orthogonal complement of the plane spanned by $\mathbf{z}$ and $\overline{\mathbf{z}}$, under the assumption that $\mathbf{z}$ and $\overline{\mathbf{z}}$ are linearly independent, is elliptic (positive definite), parabolic (degenerate) or hyperbolic (non-degenerate and indefinite), this is the main reason to use the Sylvester's law of Inertia. This information as well as the linearly dependence of $\mathbf{z}$ and $\overline{\mathbf{z}}$  is codified by the function $f$. In the same manner that before the function $f$ determines a partition of $\mathbb{C}^{m,1} \setminus \{\mathbf{0}\}$ which projects to a partition of $\mathbb{P}_\mathbb{C}^m$ into $\mathrm{SO}_+(m,1)$-invariants sets which are useful to describe $\Lambda$ and $\Omega$. For $2<m=n$ we have that the set $\Omega$ is connected, in contrast to the case $m=n=2$, this occurs because $\mathbb{C}^{m,1}$ does not contain negative definite subspaces of codimension 2 while $\mathbb{C}^{2,1}$ does contain such spaces. This phenomenon makes the set $\Lambda$ become ``larger'' for $m>2$, in fact in this case the set $\Lambda$ has non-empty interior. In order to study the case $m<n$ we consider the projection 
\begin{gather*}
Q: \mathbb{P}_\mathbb{C}^n \setminus \Lambda_0 \rightarrow \mathbb{P}_\mathbb{C}^m \\
[z_1: \cdots :z_{n+1}] \mapsto [z_1: \cdots :z_m: z_{n+1}]
\end{gather*}
which is induced by the map 
\begin{gather} \label{eq:pro}
\widetilde{Q}:\mathbb{C}^{n,1} \rightarrow \mathbb{C}^{m,1} \\
(z_1, \cdots ,z_{n+1}) \mapsto (z_1, \cdots ,z_m, z_{n+1}) \notag
\end{gather}
the set $\Lambda_0$ is the projectivization of the kernel of the map $\widetilde{Q}$ and is contained in the Kulkarni limit set $\Lambda$, the projection $Q$ allows to obtain the topological information of the sets $\Lambda$ and $\Omega$ from the information obtained when $m$ and $n$ coincide. Actually, if we denote by $\Lambda_{(m)}$ the Kulkarni limit set of $\Gamma$ when it acts on the copy of $\mathbb{P}_\mathbb{C}^m$, then the set $\Lambda$ is obtained as the union of $\Lambda_0$ and the preimage under $Q$ of $\Lambda_{(m)}$. Also, writing $\Omega_{(m)}$ the complement of $\Lambda_{(m)}$ in $\mathbb{P}_\mathbb{C}^m$ we obtain that $\Omega$ is the preimage under $Q$ of $\Omega_{(m)}$. Our main results can be summarized in the following theorems. 
\begin{theorem}\label{teo:uno}
Let $\Gamma$ be a discrete subgroup of $\mathrm{SO}_+(m,1)$ acting in $\mathbb{H}_{\mathbb{R}}^m$, whose limit set is $\mathbb{S}^{m-1}$. If $\Omega$ is the Kulkarni discontinuity region of $\iota(\Gamma)$, see \eqref{eq:0}, acting in $\mathbb{P}_{\mathbb{C}}^m$, then there exists an $\mathrm{SO_+}(m,1)$-equivariant smooth fibre bundle $\Pi$ from $\Omega$ to $\mathbb{H}_{\mathbb{R}}^{m}$.   
\end{theorem}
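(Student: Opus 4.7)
The plan is to build the bundle projection $\Pi$ by exploiting the complex-conjugation symmetry. Because $\iota(\mathrm{SO}_+(m,1))\subset\mathrm{SU}(m,1)$ consists of real matrices, the antilinear involution $\sigma\colon\mathbf{z}\mapsto\overline{\mathbf{z}}$ of $\mathbb{C}^{m,1}$ commutes with $\iota(\Gamma)$ and descends to an antiholomorphic involution of $\mathbb{P}_\mathbb{C}^m$, also denoted $\sigma$. Its fixed locus is $\mathbb{P}_\mathbb{R}^m$, and since both $\mathcal{S}$ and the polarity $z\mapsto z^\perp$ are $\sigma$-equivariant, $\sigma$ preserves $\Lambda$ and $\Omega$; in particular $\mathbb{H}_\mathbb{R}^m=\mathbb{H}_\mathbb{C}^m\cap\mathrm{Fix}(\sigma)\subset\Omega$. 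Once $\Pi$ is built out of $\sigma$ and the Hermitian form alone, its $\mathrm{SO}_+(m,1)$-equivariance will be automatic.

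On the open piece $\mathbb{H}_\mathbb{C}^m\subset\Omega$ I first define $\Pi(z)$ as the Bergman-metric midpoint of $z$ and $\sigma(z)$: this midpoint is characterised metrically and $\sigma$ is a Bergman isometry exchanging the two points, so $\Pi(z)$ is $\sigma$-fixed and lies in $\mathbb{H}_\mathbb{R}^m$. To extend $\Pi$ to the rest of $\Omega$ I would use the stratification provided by the function $f$ from the Introduction. Whenever $\mathbf{z}$ and $\sigma(\mathbf{z})$ are linearly independent, the 2-plane $V_z=\mathrm{Span}_\mathbb{C}(\mathbf{z},\sigma(\mathbf{z}))$ is $\sigma$-stable and hence of the form $V_z^{\mathbb{R}}\otimes_\mathbb{R}\mathbb{C}$ for a real 2-plane $V_z^{\mathbb{R}}\subset\mathbb{R}^{m,1}$; a direct Gram-matrix computation shows $V_z^{\mathbb{R}}$ is Lorentzian exactly when $f(\mathbf{z})>0$ and positive-definite exactly when $f(\mathbf{z})<0$. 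In the Lorentzian case $\mathbb{P}(V_z^{\mathbb{R}})\cap\mathbb{H}_\mathbb{R}^m$ is a geodesic $\gamma_z$ and I define $\Pi(z)$ as the natural extension of the midpoint to the projective closure of the complex hyperbolic line $\mathbb{P}(V_z)\cap\mathbb{H}_\mathbb{C}^m$ — equivalently, the unique point of $\gamma_z$ polar to $[\mathbf{z}]$ under $\sigma|_{\mathbb{P}(V_z)}$. In the positive-definite case $[\mathbf{z}]$ lies outside $\overline{\mathbb{H}_\mathbb{C}^m}$ and I use instead the $\sigma$-stable Hermitian complement $V_z^{\perp}$, whose real form has signature $(m-2,1)$ and meets $\mathbb{H}_\mathbb{R}^m$ in a totally real totally geodesic $\mathbb{H}_\mathbb{R}^{m-2}$ onto which $[\mathbf{z}]$ projects naturally. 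Finally on $\Omega\cap\mathbb{P}_\mathbb{R}^m=\mathbb{H}_\mathbb{R}^m$ one sets $\Pi(z)=z$.

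Once $\Pi$ is smooth the fibre-bundle conclusion is formal: fixing $x_0\in\mathbb{H}_\mathbb{R}^m$ with stabiliser $K\cong\mathrm{SO}(m)$, equivariance makes $F:=\Pi^{-1}(x_0)$ a $K$-invariant smooth subset, and the map $[g,v]\mapsto g\cdot v$ identifies $\mathrm{SO}_+(m,1)\times_K F$ with $\Omega$ as an equivariant smooth fibre bundle over $\mathbb{H}_\mathbb{R}^m=\mathrm{SO}_+(m,1)/K$.

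The hard part will be checking that the piecewise recipe above really assembles into a single smooth submersion across the interfaces of the $f$-stratification: along $\{f=0\}$, across $\partial\mathbb{H}_\mathbb{C}^m\cap\Omega$, and where $\mathbf{z}$ becomes proportional to $\sigma(\mathbf{z})$. I expect the cleanest route is to replace the case analysis by a single closed formula for $\Pi$ — essentially a Hermitian projection onto the real form with coefficients controlled by $f$ — after which smoothness and the submersion property reduce to an explicit calculation, and the structure of the fibre $F$ can be read off from the companion theorems on the geometry of $\Omega$.
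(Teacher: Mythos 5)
Your core construction is the one the paper uses: the map is Goldman's orthogonal projection, sending $z$ to the midpoint of $z$ and $\overline z$, which admits the single closed formula $\widetilde\Pi(\mathbf z)=\mathbf z\,\overline\eta(\mathbf z)+\overline{\mathbf z}\,\eta(\mathbf z)$ with $\eta^2(\mathbf z)=-\langle\mathbf z,\overline{\mathbf z}\rangle$, and the bundle structure comes from equivariance together with the transitivity of $\mathrm{SO}_+(m,1)$ on $\mathbb H_{\mathbb R}^m$. The genuine gap is in your identification of the domain. For $m>2$ the paper shows (Lemmas \ref{le:5} and \ref{le:6}, Proposition \ref{te:de}) that $\Omega=[U_+]\cup\mathbb H_{\mathbb R}^m$: a point with $\mathbf z,\overline{\mathbf z}$ independent lies in $\Omega$ precisely when $f(\mathbf z)>0$, i.e.\ when $\overleftrightarrow{\mathbf z,\overline{\mathbf z}}$ is hyperbolic. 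Your ``positive-definite case'' ($f<0$) and the non-real part of $\{f=0\}$ lie entirely in $\Lambda$: there $\mathbf H_{\mathbf z}\cap\mathbf H_{\overline{\mathbf z}}$ is the complexification of a real subspace that is hyperbolic or parabolic (it has codimension $2$, hence dimension $>1$, so it cannot be negative definite), so it contains a real null vector and $z\in\Lambda$ by Lemma \ref{le:la}. Consequently the extension you propose on those strata defines $\Pi$ on points outside its domain, and the ``hard part'' you flag --- gluing across the $f$-stratification and across $\partial\mathbb H_{\mathbb C}^m$ --- does not arise: the formula above is smooth on all of $U_+$ (where $\eta\neq0$) and reduces to a positive multiple of the identity on real negative vectors. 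Your $f<0$ discussion is really the $m=2$ picture of \cite{cano2016action}, where those points do lie in $\Omega$ and account for the two extra components; carrying it over to $m>2$ is the step that fails.

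What remains after fixing the domain is exactly what you defer and the paper actually carries out: checking $\langle\widetilde\Pi(\mathbf z),\widetilde\Pi(\mathbf z)\rangle=2\lvert\eta(\mathbf z)\rvert^2\bigl(\langle\mathbf z,\mathbf z\rangle-\lvert\eta(\mathbf z)\rvert^2\bigr)<0$ on $U_+$ so the image lands in $\mathbb H_{\mathbb R}^m$; computing the fibre over $o=[0:\cdots:0:1]$ explicitly (the punctured Lagrangian $\{[iy_1:\cdots:iy_m:x_{m+1}]:x_{m+1}\neq0\}$, which is connected for $m>2$); and producing local trivializations from explicit elements of $\mathrm{SO}_+(m,1)$ carrying $o$ to a given point --- your associated-bundle packaging $\mathrm{SO}_+(m,1)\times_K F\cong\Omega$ is a legitimate form of that last step. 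One further correction: the midpoint $[\widetilde\Pi(\mathbf z)]$ is not the point of $\gamma_z$ polar to $[\mathbf z]$; one computes $\langle\widetilde\Pi(\mathbf z),\mathbf z\rangle=\overline\eta(\mathbf z)\bigl(\langle\mathbf z,\mathbf z\rangle-\lvert\eta(\mathbf z)\rvert^2\bigr)$, which vanishes only when $f(\mathbf z)=0$, so that alternative characterization would select the wrong point on $U_+$.
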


\begin{theorem}\label{teo:dos}
Let $\Gamma$ be a discrete subgroup of $\mathrm{SO}_+(m,1)$ acting in $\mathbb{H}_{\mathbb{R}}^m$, whose limit set is $\mathbb{S}^{m-1}$. If $\Lambda$ is the Kulkarni limit set of $ \iota(\Gamma)$, see \eqref{eq:0}, acting in $\mathbb{P}_{\mathbb{C}}^m$, then $\Lambda$ is a semi-algebraic set. Moreover, if $m>2$, $\Lambda$ has non empty interior.   
\end{theorem}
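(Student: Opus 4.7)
The plan is to characterize $\Lambda$ pointwise via the sign of $f(\mathbf{z})$ using Sylvester's law of inertia, turning the geometric description $\Lambda=\bigcup_{\mathbf{w}\in\mathcal{S}}H_{\mathbf{w}}$ recalled in the introduction into a finite system of polynomial (in)equalities on homogeneous coordinates. First, since every $\mathbf{w}\in\mathcal{S}$ admits a real representative in $\mathbb{R}^{m,1}$, and since for such real $\mathbf{w}$ one has $\overline{\langle\mathbf{z},\mathbf{w}\rangle}=\langle\overline{\mathbf{z}},\mathbf{w}\rangle$, the equation $\langle\mathbf{z},\mathbf{w}\rangle=0$ is equivalent to $\langle\overline{\mathbf{z}},\mathbf{w}\rangle=0$. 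Writing $\mathbf{z}=\mathbf{x}+i\mathbf{y}$ with $\mathbf{x},\mathbf{y}\in\mathbb{R}^{m+1}$, this shows $[\mathbf{z}]\in\Lambda$ if and only if the real orthogonal complement $V^{\perp}$ of $V:=\mathrm{span}_{\mathbb{R}}(\mathbf{x},\mathbf{y})$ in $\mathbb{R}^{m,1}$ contains a non-zero null vector.

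Next, a direct computation gives that the Hermitian Gram determinant of $(\mathbf{z},\overline{\mathbf{z}})$ equals four times the real Gram determinant of $(\mathbf{x},\mathbf{y})$, so the sign of $f$ detects the signature of the form restricted to $V$, and by complementarity the signature of $V^{\perp}$. Assuming $\mathbf{z}$ and $\overline{\mathbf{z}}$ are linearly independent over $\mathbb{C}$, Sylvester's law then yields a complete membership criterion: when $f>0$, $V$ has signature $(1,1)$ and $V^{\perp}$ is positive definite of signature $(m-1,0)$, so $[\mathbf{z}]\notin\Lambda$; when $f=0$, $V$ is degenerate and its radical is simultaneously null and contained in $V^{\perp}$, so $[\mathbf{z}]\in\Lambda$; and when $f<0$, $V$ is positive definite and $V^{\perp}$ has signature $(m-2,1)$, which contains a null vector precisely when $m\geq 3$. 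The complementary case in which $\mathbf{z}$ and $\overline{\mathbf{z}}$ are linearly dependent corresponds to $[\mathbf{z}]$ being a real point of $\mathbb{P}_{\mathbb{C}}^{m}$, and a one-dimensional Sylvester argument places $[\mathbf{z}]$ in $\Lambda$ exactly when $\langle\mathbf{z},\mathbf{z}\rangle\geq 0$.

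All of the conditions involved, namely the sign of $f$, the vanishing of the $2\times 2$ minors of $(\mathbf{z}\,|\,\overline{\mathbf{z}})$, and the sign of $\langle\mathbf{z},\mathbf{z}\rangle$, are invariant under $\mathbf{z}\mapsto\lambda\mathbf{z}$ up to a positive scalar (for instance $f(\lambda\mathbf{z})=|\lambda|^{4}f(\mathbf{z})$), so each descends to a well-defined locus in $\mathbb{P}_{\mathbb{C}}^{m}$; viewing $\mathbb{P}_{\mathbb{C}}^{m}$ as a real algebraic variety in the standard way, this exhibits $\Lambda$ as a real semi-algebraic subset. For the interior statement, when $m\geq 3$ the open set $\{[\mathbf{z}]:f(\mathbf{z})<0\}$ is non-empty because $\mathbb{R}^{m,1}$ contains positive definite real $2$-planes (take any two orthonormal spacelike vectors), and by the preceding analysis this open set lies in $\Lambda$; hence $\Lambda$ has non-empty interior whenever $m>2$. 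The main obstacle I anticipate is the careful bookkeeping at the locus $f=0$: one must separate the linearly dependent sub-case (whose membership depends on the sign of $\langle\mathbf{z},\mathbf{z}\rangle$) from the linearly independent degenerate sub-case (which is always in $\Lambda$), and verify that the resulting polynomial (in)equalities genuinely reproduce the tangent-hyperplane union $\bigcup_{\mathbf{w}\in\mathcal{S}}H_{\mathbf{w}}$.
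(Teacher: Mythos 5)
Your proposal is correct and follows essentially the same route as the paper: both reduce membership in $\Lambda$ to whether the orthogonal complement of the plane spanned by $\mathbf{z}$ and $\overline{\mathbf{z}}$ contains a null vector, read off the signature of that plane from the sign of the Gram determinant $f$ via Sylvester's law of inertia, handle the linearly dependent (real point) case separately through the sign of $\langle\mathbf{z},\mathbf{z}\rangle$, and obtain the non-empty interior from the open locus $\{f<0\}$ when $m>2$. The only cosmetic difference is that you work with the real $2$-plane $\mathrm{span}_{\mathbb{R}}(\mathbf{x},\mathbf{y})$ in $\mathbb{R}^{m,1}$, while the paper works with its complexification $\overleftrightarrow{\mathbf{z},\overline{\mathbf{z}}}$ and the complex orthogonal complement $\mathbf{H_z}\cap\mathbf{H}_{\overline{\mathbf{z}}}$; the signatures agree, so the arguments are interchangeable.
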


\begin{theorem}\label{teo:tres}
Let $\Gamma$ be a discrete subgroup of $\mathrm{SO}_+(m,1)$ acting in $\mathbb{H}_{\mathbb{R}}^m$, whose limit set is $\mathbb{S}^{m-1}$. If $\Omega$ is the Kulkarni discontinuity region of $\iota(\Gamma)$, see \eqref{eq:0}, acting in $\mathbb{P}_{\mathbb{C}}^n$, where $n \geq m$, then 
\begin{itemize}
	\item[i)] $\Omega$ has three connected components, for $m=2$;
	\item[ii)] $\Omega$ is connected, for $m>2$. 
\end{itemize}
\end{theorem}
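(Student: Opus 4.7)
The plan is to split the proof into the equal-dimensional case $n=m$, which is handled by the function $f$ and Sylvester's law of inertia, and the higher-dimensional case $n>m$, which I reduce to $n=m$ via the projection $Q$ introduced in the introduction. For $n=m$ I use the $\mathrm{SO}_+(m,1)$-invariant stratification of $\mathbb{P}_\mathbb{C}^m$ given by (a) the locus where $\mathbf{z}$ and $\overline{\mathbf{z}}$ are linearly dependent (which projects to $\mathbb{P}_\mathbb{R}^m\hookrightarrow\mathbb{P}_\mathbb{C}^m$) and (b) on its complement, the sign of $f(\mathbf{z})$, which records the signature of the complex orthogonal complement of $\mathrm{span}_\mathbb{C}(\mathbf{z},\overline{\mathbf{z}})$. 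The zero set $\{f=0\}$ lies in $\Lambda$ since a degenerate orthogonal complement meets the real null cone, so the candidate components of $\Omega$ arise from $\{f>0\}$, $\{f<0\}$, and appropriate pieces of the linearly-dependent locus.

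For $m=2$ I would identify three components: the complex hyperbolic ball $\mathbb{H}_\mathbb{C}^2$, an outer region coming from $\{f>0\}$, and a third region associated to negative-definite $1$-dimensional (codimension $2$) subspaces of $\mathbb{C}^{2,1}$. Each is shown to be connected by combining the transitive $\mathrm{SO}_+(2,1)$-action on slices with an explicit path in the complex phase, and they are pairwise distinguished by the locally constant invariant given by the sign of $f$ and of $\langle\mathbf{z},\mathbf{z}\rangle$. For $m>2$ the negative-definite codimension-$2$ stratum is empty, since $\mathbb{C}^{m,1}$ has signature $(m,1)$ and hence no negative-definite subspace of dimension $m-1>1$. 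The two remaining open strata $\{f>0\}$ and $\{f<0\}$ must then be shown to be joined into a single connected component of $\Omega$: I would produce explicit paths between a point in each stratum by detouring through the real locus $\mathbb{P}_\mathbb{R}^m\setminus \mathbb{S}^{m-1}$, using the fact that for $m\geq 3$ this real locus is already path-connected and lies in $\Omega$.

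For $n>m$, the projection $Q:\mathbb{P}_\mathbb{C}^n\setminus\Lambda_0\to\mathbb{P}_\mathbb{C}^m$ satisfies $\Omega=Q^{-1}(\Omega_{(m)})$, as recalled in the introduction. A fiber of $Q$ over a point of $\Omega_{(m)}$ is the complement, in a complex projective subspace of $\mathbb{P}_\mathbb{C}^n$, of the projective subspace $\Lambda_0$; such a complement is an affine chart and in particular is connected. Consequently $Q|_{\Omega}\to\Omega_{(m)}$ is a surjection with connected fibers, so the connected components of $\Omega$ are in natural bijection with those of $\Omega_{(m)}$. This reduces both parts $(\mathrm{i})$ and $(\mathrm{ii})$ to the $n=m$ case already treated.

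The main obstacle I expect is the connectedness step for $m>2$, $n=m$: showing that the strata $\{f>0\}$ and $\{f<0\}$, which are separated inside the ambient $\mathbb{P}_\mathbb{C}^m$ by the codimension-one hypersurface $\{f=0\}\subset\Lambda$, nevertheless lie in a common component of $\Omega$ once one routes through $\mathbb{P}_\mathbb{R}^m\setminus\mathbb{S}^{m-1}$. Carrying this out requires a careful determination of exactly which real points of $\mathbb{P}_\mathbb{R}^m$ belong to $\Omega$ versus $\Lambda$, and then a concrete construction of continuous paths that pass from one sign of $f$ to the other across the real locus; this is where the hypothesis $m>2$ is used in an essential topological way, in contrast to $m=2$ where such a detour is obstructed by the third component coming from the exceptional negative-definite subspaces.
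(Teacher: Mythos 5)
Your reduction of the case $n>m$ to $n=m$ via the projection $Q$ is sound and is essentially the paper's own argument (Lemmas \ref{le:17} and \ref{le:19}): the fibres of $Q$ over $\Omega_{(m)}$ are affine spaces $\cong\mathbb{C}^{n-m}$ and $\Omega=Q^{-1}(\Omega_{(m)})$ is a trivial bundle over $\Omega_{(m)}$, so components correspond. The genuine gap is in the equal-dimensional case, and it is not the technical difficulty you anticipate but a misidentification of $\Omega$ itself. For $m=n>2$ the stratum $\{f<0\}=[U_-]$ is contained in $\Lambda$, not in $\Omega$: if $\overleftrightarrow{\mathbf{z},\overline{\mathbf{z}}}$ is elliptic, its orthogonal complement $\mathbf{H_z}\cap\mathbf{H}_{\overline{\mathbf{z}}}$ is a hyperbolic subspace of dimension $n-1\geq 2$ (it cannot be negative definite, as you yourself observe), it is the complexification of a real subspace of signature $(n-2,1)$, and hence it contains real null vectors; by Lemma \ref{le:la} this places $z$ in $\Lambda$. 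This is exactly Lemma \ref{le:5} ($z\in\Lambda$ iff $f(\mathbf{z})\leq 0$ on the linearly independent locus) and is precisely why $\Lambda$ has non-empty interior in Theorem \ref{teo:dos}. Consequently there is no second open stratum to join to $\{f>0\}$: one has $\Omega=[U_+]\cup\mathbb{H}_{\mathbb{R}}^m$ (Proposition \ref{te:de}), and the only thing to prove is that this single set is connected. Your proposed bridge is also unavailable: by Lemma \ref{le:6} a real point with $\langle\mathbf{z},\mathbf{z}\rangle\geq 0$ lies on a real tangent hyperplane whose complexification is in $\Lambda$, so $\mathbb{P}_{\mathbb{R}}^m\cap\Omega=\mathbb{H}_{\mathbb{R}}^m$ and the claim that $\mathbb{P}_{\mathbb{R}}^m\setminus\mathbb{S}^{m-1}$ lies in $\Omega$ is false. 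The paper instead obtains connectedness for $m=n>2$ from Theorem \ref{teo:uno}: the midpoint projection $\Pi$ exhibits $\Omega$ as a smooth fibre bundle over the connected base $\mathbb{H}_{\mathbb{R}}^m$ with connected Lagrangian fibres $L_x$.

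Your sketch for $m=2$ is also not correct as stated. The three components of $\Omega_{(2)}$ are not ``the ball $\mathbb{H}_{\mathbb{C}}^2$, an outer $\{f>0\}$ region, and the negative-definite region.'' Since each tangent line $H_p$ meets $\overline{\mathbb{H}_{\mathbb{C}}^2}$ only at the point of tangency $p$, the connected set $\overline{\mathbb{H}_{\mathbb{C}}^2}\setminus\partial\mathbb{H}_{\mathbb{R}}^2$ lies in $\Omega$, so the open ball is in the same component as points outside the closed ball; in particular the sign of $\langle\mathbf{z},\mathbf{z}\rangle$ is not locally constant on $\Omega$ and cannot separate components. The correct decomposition (from \cite{cano2016action}, reproduced in Subsection \ref{subsec8}) has $\Omega_{(2)}^{0}=[U_+]\cup\mathbb{H}_{\mathbb{R}}^2$ as a single component, while the negative-definite stratum $[U_-]$ is disconnected and splits into the two components $\Omega_{(2)}^{1\pm}$ distinguished by the sign of $\mathrm{Re}(p_1)\mathrm{Im}(p_2)-\mathrm{Im}(p_1)\mathrm{Re}(p_2)$. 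Your count of three is coincidentally right, but the invariants you propose neither separate the actual components nor detect the splitting of $[U_-]$.
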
 

The map $\Pi$ that appears in Theorem \ref{teo:uno} assigns to $z \in \Omega$ the class $[\mathbf{z} \overline{\eta} (\mathbf{z})+ \mathbf{\overline{z}} \eta (\mathbf{z})]$, where $\eta^2(\mathbf{z})\!=\!-<\!\mathbf{z},\mathbf{\overline{z}}\!>$, we can compute explicitly the fibre over the origin and use the equivariance of $\Pi$ to show that it is a smooth fibre bundle. Theorem \ref{teo:dos} follows from the Lemmas \ref{le:5} and \ref{le:6} which use the Sylvester's law of Inertia to give a criterion to decide whether or not a point is in $\Lambda$ for $m>2$, the result for $m=2$ was proved in \cite{cano2017limit}. Theorem \ref{teo:tres} follows from Lemmas \ref{le:17} and \ref{le:19} which are obtained using the projections defined in \eqref{eq:pro}. 
We have the following consequence of our results:
\begin{corollary}
Let $\Gamma$ be a discrete subgroup of $\mathrm{SO}_+(m,1)$ acting in $\mathbb{H}_{\mathbb{R}}^m$, whose limit set is $\mathbb{S}^{m-1}$. If $\Omega$ is the Kulkarni discontinuity region of $\iota(\Gamma)$, see \eqref{eq:0}, acting in $\mathbb{P}_{\mathbb{C}}^n$, where $n \geq m$, then the quotient $\Omega / \Gamma$ is the union of a finite number of $n$-dimensional complex manifolds.  
\end{corollary}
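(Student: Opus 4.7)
My plan is to decompose the argument into two observations: first, that $\Omega$ has finitely many connected components, each invariant under $\iota(\Gamma)$; and second, that the properly discontinuous holomorphic action on each component yields an $n$-dimensional complex manifold quotient.

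By the very definition of the Kulkarni limit set, the induced action of $\iota(\Gamma)$ on the open set $\Omega = \mathbb{P}_\mathbb{C}^n \setminus \Lambda$ is properly discontinuous, and since $\iota(\Gamma) \subset \mathrm{SU}(n,1)$, this action is by biholomorphisms of $\mathbb{P}_\mathbb{C}^n$. Next I would show that each connected component of $\Omega$ is $\iota(\Gamma)$-invariant: the sphere $\mathcal{S}\subset \partial\mathbb{H}_\mathbb{R}^n$ is manifestly $\mathrm{SO}_+(m,1)$-invariant, hence so are $\Lambda=\bigcup_{z\in\mathcal{S}}H_z$ and its complement $\Omega$. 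Because $\mathrm{SO}_+(m,1)$ is path-connected, any $\iota(\gamma)$ is connected to the identity through a continuous path in $\iota(\mathrm{SO}_+(m,1))$, which induces a continuous isotopy of $\Omega$; therefore $\iota(\gamma)$ necessarily sends each connected component of $\Omega$ to itself.

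By Theorem \ref{teo:tres}, the number of connected components of $\Omega$ is $3$ when $m=2$ and $1$ when $m>2$, in either case finite. On each component, $\iota(\Gamma)$ acts properly discontinuously by biholomorphisms; passing, if necessary, to a torsion-free finite-index subgroup of $\Gamma$ via Selberg's lemma and then taking a further finite quotient, the quotient naturally carries the structure of an $n$-dimensional complex manifold. Assembling these pieces, $\Omega/\Gamma$ is the disjoint union of at most three $n$-dimensional complex manifolds, as asserted.

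The principal technical wrinkle is in handling elements of $\Gamma$ with fixed points in $\Omega$, for instance elliptic elements whose fixed locus in $\mathbb{H}_\mathbb{R}^m\subset \mathbb{H}_\mathbb{C}^n$ meets $\Omega$. A clean resolution is to pass to a torsion-free subgroup of finite index and describe $\Omega/\Gamma$ as the quotient of a complex manifold by a finite group of holomorphic automorphisms, whose smooth stratum provides the desired $n$-dimensional complex manifold structure on each connected component.
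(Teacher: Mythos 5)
Your overall route is the one the paper intends: the corollary is stated there as a direct consequence of Theorem \ref{teo:tres} (the number of components of $\Omega$ is one or three, hence finite) together with Kulkarni's theorem that $\iota(\Gamma)$ acts properly discontinuously and holomorphically on $\Omega_{Kul}$, and your observation that the connected group $\mathrm{SO}_+(m,1)$ preserves $\Omega$ and therefore acts trivially on its (discrete) set of components is a correct way to see that each component is $\iota(\Gamma)$-invariant (though for the statement as written even a permutation of finitely many components would suffice).

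The one step that does not hold up is your treatment of torsion. If $\gamma\in\Gamma$ is elliptic, it fixes a point of $\mathbb{H}_{\mathbb{R}}^m\subset\Omega$, and near the image of such a point the quotient $\Omega/\Gamma$ is modelled on $\mathbb{C}^n/G$ for a nontrivial finite group $G$ of biholomorphisms; this is in general a normal complex space with quotient singularities, not a manifold. Passing to a torsion-free finite-index subgroup $\Gamma'$ (which, incidentally, requires $\Gamma$ to be finitely generated for Selberg's lemma to apply) exhibits $\Omega/\Gamma$ as a finite quotient of the complex manifold $\Omega/\Gamma'$, but restricting to the ``smooth stratum'' of that quotient does not recover all of $\Omega/\Gamma$, so your final sentence does not prove the statement as literally written. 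In fairness, the paper supplies no argument for this corollary and the statement is most honestly read at the level of complex orbifolds, or under the additional hypothesis that $\Gamma$ is torsion-free; read that way, your proof is complete and its substance is exactly Theorem \ref{teo:tres} plus proper discontinuity of the action on the Kulkarni domain.
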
 
The main interest in counting connected components comes from Kleinian groups in $\mathrm{PSL}(2,\mathbb{C})$ where it is known that the number of components of the discontinuity region can only be $0$, $1$, $2$ or infinity. For higher dimensional Kleinian groups there is no result on the number of components, our results provide examples where the Kulkarni discontinuity region is connected or has three components. In particular, we calculate the number of components for lattices in $\mathrm{SO}_+(m,1)$ for $m \geq 2$.   
The paper is organized as follows: Section \ref{sec2} is devoted to generalities about complex hyperbolic geometry and projective geometry. The case of a discrete subgroup of $\mathrm{SO}_+(m,1)$ acting on $\mathbb{P}_\mathbb{C}^m$ viewed as a subgroup of $\mathrm{SU}(m,1)$ via the embedding $\iota$ is discussed in Section \ref{sec3}. In Section \ref{sec4}, we treated the case when the subgroup $\Gamma$ of $\mathrm{SO}_+(2,1)$ has as limit set a $\mathbb{R}$-circle, and $n$ is greater than 2. Finally, the remaining cases are studied in Section \ref{sec5}.

\section{Preliminaries}\label{sec2}
   \subsection{Complex Projective Spaces and Complex Hyperbolic Geometry}
\label{subsec1}
We recall some results about complex hyperbolic geometry, general references for complex hyperbolic geometry are \cite{bridson2013metric}, \cite{kapovich2022survey},\cite{parker2003notes},\cite{parker2008hyperbolic} and \cite{schwartz2007spherical} . We call projective complex space to the quotient of $\mathbb{C}^{n+1} \setminus \{\mathbf{0}\}$ obtained by the equivalence relation: $\mathbf{u} \sim \mathbf{v}$ if there is $\alpha \in \mathbb{C}^*$ such that $\mathbf{u}= \alpha \mathbf{v}$. We denote by $\mathbb{P}_\mathbb{C}^n$ the projective complex space and by $[ \; \;]$ the quotient map that sends a point $\mathbf{z}=(z_1,z_2,...,z_{n+1})$ to its corresponding class, denoted $[z_1:z_2:...:z_{n+1}]$.\\
 
A complex projective  $(d-1)$-subspace of $\mathbb{P}_\mathbb{C}^n$ is a subset $W$ of $\mathbb{P}_\mathbb{C}^n$ such that $[W]^{-1}  \cup \{0\}$ is a complex linear $d$-subspace of $\mathbb{C}^{n+1}$. Given a subset $A \subset \mathbb{P}_\mathbb{C}^n$, we denote by $\langle A\rangle$ the smallest projective subspace containing $A$ and, we call it the projective subspace generated by $A$. For $\mathbf{z_1}$ and $\mathbf{z_2}$ linearly independent vectors in $\mathbb{C}^{n+1}$, we denote by $\overleftrightarrow{\mathbf{z_1},\mathbf{z_2}}$ the complex $2$-plane spanned by  $\mathbf{z_1}$ and $\mathbf{z_2}$. Also we denote by $\overleftrightarrow{z_1,z_2}$ to the line $\langle \{z_1,z_2\} \rangle$, where $z_1=[\mathbf{z_1}]$ and $z_2=[\mathbf{z_2}]$. The next lemma is the translation of the dimension formula for vector spaces to the setting of projective spaces, see \cite{coxeter2003projective}.
\begin{lemma}
Given $L$ and $L'$ two projective subspaces of $\mathbb{P}_\mathbb{C}^n$, the following formula holds: 
\begin{equation} \notag
\mathrm{dim}\:L+\mathrm{dim}\:L'=\mathrm{dim}(L \cap L')+\mathrm{dim}(\langle L \cup L' \rangle).
\end{equation}   
\end{lemma}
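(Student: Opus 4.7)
The plan is to lift everything to $\mathbb{C}^{n+1}$ and invoke the Grassmann (dimension) formula for linear subspaces, then subtract to account for the shift in dimension under projectivization.

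First I would fix notation: write $V = [L]^{-1} \cup \{\mathbf{0}\}$ and $V' = [L']^{-1} \cup \{\mathbf{0}\}$, the linear subspaces of $\mathbb{C}^{n+1}$ associated to $L$ and $L'$ by the definition given just above the lemma. By that definition we have $\dim L = \dim V - 1$ and $\dim L' = \dim V' - 1$, with the convention $\dim \emptyset = -1$ (so that $V = \{\mathbf{0}\}$ corresponds to the empty projective subspace).

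Next I would identify the two projective constructions in the statement with linear-algebraic ones. For the intersection, a point $[\mathbf{z}]$ lies in $L \cap L'$ iff $\mathbf{z} \in V \cap V'$, so $[V \cap V']^{-1} \cup \{\mathbf{0}\} = V \cap V'$ and hence $\dim(L \cap L') = \dim(V \cap V') - 1$. For the join, I would verify that $\langle L \cup L' \rangle$ is the projectivization of $V + V'$: the subspace $V + V'$ is a linear subspace containing both $V$ and $V'$ and therefore projectivizes to a projective subspace containing $L \cup L'$; conversely any projective subspace containing $L \cup L'$ lifts to a linear subspace containing both $V$ and $V'$, hence containing $V + V'$. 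Thus $\dim \langle L \cup L' \rangle = \dim(V+V') - 1$.

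With these identifications the statement reduces to the classical Grassmann identity
\begin{equation}\notag
\dim V + \dim V' = \dim(V \cap V') + \dim(V + V'),
\end{equation}
which I would assume as standard linear algebra. Subtracting $2$ from both sides yields the desired projective formula. The only subtlety to check is the degenerate case $V \cap V' = \{\mathbf{0}\}$, where $L \cap L' = \emptyset$ and the convention $\dim \emptyset = -1$ makes the bookkeeping come out correctly; this is really the main (and only) obstacle, and it is purely a matter of being careful with the empty-set convention rather than any substantive difficulty.
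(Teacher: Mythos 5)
Your proof is correct and is exactly the argument the paper has in mind: the paper gives no proof, merely describing the lemma as ``the translation of the dimension formula for vector spaces to the setting of projective spaces'' and citing Coxeter, and your lift-to-$\mathbb{C}^{n+1}$, apply-Grassmann, subtract-$2$ argument (with the $\dim\emptyset=-1$ convention handled) supplies precisely the details that translation requires.
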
  

As a consequence: If $\mathrm{dim}\:L+\mathrm{dim}\:L' \geq n$, the intersection $L \cap L'$ is non-empty. We denote by $\mathbb{C}^{n,1}$ the complex vector space $\mathbb{C}^{n+1}$ equipped with the non-degenerate indefinite Hermitian form of signature $(n,1)$:
\begin{equation}\notag 
\langle\mathbf{z},\mathbf{w}\rangle=z_1 \overline{w_1}+z_n \overline{w_2}+ \cdots z_n \overline{w_n}-z_{n+1} \overline{w_{n+1}},
\end{equation}  
where $\mathbf{z}=(z_1,z_2, \ldots,z_{n+1})$ and $\mathbf{w}=(w_1,w_2, \ldots,w_{n+1})$. Given two vectors $\mathbf{z},\mathbf{w}$ in $\mathbb{C}^{n+1}$, we say that 
$\mathbf{z}$ and $\mathbf{w}$ are orthogonal if $\langle\mathbf{z},\mathbf{w}\rangle=0$. Consider $W$ a subspace of $\mathbb{C}^{n+1}$, the set 
\begin{equation}\notag 
W^{\bot}=\{\mathbf{z} \in \mathbb{C}^{n+1} : \langle\mathbf{z},\mathbf{w}\rangle=0 \; \mathrm{for \; every \;\mathbf{w} \in W  } \}
\end{equation}
 is a subspace of $\mathbb{C}^{n+1}$ that we call the orthogonal complement of $W$. We use the same terminology as in \cite{chen1974hyperbolic}. That is, if $W$ is a subspace of $\mathbb{C}^{n+1}$, we say that $W$ is hyperbolic, elliptic or parabolic if the Hermitian form $H$ restricted to $W$ is respectively non-degenerate and indefinite, positive definite or degenerate. \\

When $W$ is a hyperbolic (respectively elliptic) subspace then $\mathbb{C}^{n+1}=W \oplus W^{\bot}$, so $W^{\bot}$ must be elliptic (respectively hyperbolic). If $W$ is a parabolic subspace, then so is $W^{\bot}$.  We use the following notation for the null, positive and negative sets of vectors in $\mathbb{C}^{n+1}$: 

\begin{align*} 
V_0=&\{\mathbf{z} \in \mathbb{C}^{n,1} \setminus \{\mathbf{0}\}: \langle\mathbf{z},\mathbf{z}\rangle=0\}, \\
V_+=&\{\mathbf{z} \in \mathbb{C}^{n,1}: \langle\mathbf{z},\mathbf{z}\rangle>0\}, \\
V_-=&\{\mathbf{z} \in \mathbb{C}^{n,1}: \langle\mathbf{z},\mathbf{z}\rangle<0\}.
\end{align*}

An automorphism that preserves the Hermitian form is called an unitary transformation and we denote the group of all unitary transformations by $
\mathrm{U}(n,1)$. The sets $V_0,V_+$ and $V_-$ are preserved by $\mathrm{U}(n,1)$, also $\mathrm{U}(n,1)$ acts transitively in $V_-$ and doubly transitively in $V_0$. The projectivization of $V_-$: 

\begin{equation}\notag
[V_-]=\{[z_1:\cdots :z_n:1] \in \mathbb{P}_{\mathbb{C}}^n: \lvert z_1 \rvert ^2 +\cdots  +\lvert z_n \rvert ^2< 1 \}
\end{equation}

is a complex $n$-dimensional open ball in $\mathbb{P}_{\mathbb{C}}^n$. The set $[V_-]$ equipped with the quadratic form induced by the Hermitian Form $\langle \cdot \; , \cdot \rangle$ is a model for the complex hyperbolic space $\mathbb{H}_{\mathbb{C}}^n$. In the same manner, we obtain that $[V_0]$ is the $(2n-1)$-sphere in $\mathbb{P}_{\mathbb{C}}^n$ that is the boundary of $\mathbb{H}_{\mathbb{C}}^n$. Finally, we obtain that $[V_+]$ is the complement of the complex n-dimensional closed ball $\overline{\mathbb{H}_{\mathbb{C}}^n}$.\\ 

The projectivization in $\mathrm{PGL}(n+1,\mathbb{C})$ of the unitary group $\mathrm{U}(n,1)$, denoted by $\mathrm{PU}(n,1)$, acts transitively in $\mathbb{H}_{\mathbb{C}}^n$ and by diffeomorphisms in the boundary $\partial \mathbb{H}_{\mathbb{C}}^n$.

\subsection{Sylvester's law of Inertia}
\label{subsec2}

In order to obtain analogous results to those in Cano et al. \cite{cano2016action},  we use the following Lemma known as Sylvester's law of Inertia.

\begin{lemma} \label{le:sy}
Let $\mathbf{z_1}$ and $\mathbf{z_2}$ be two linearly independent vectors in $\mathbb{C}^{n,1}$. Consider the matrix $G=\left(\begin{array}{cc}
\langle\mathbf{z_1},\mathbf{z_1}\rangle & \langle\mathbf{z_1},\mathbf{z_2}\rangle\\
\langle\mathbf{z_2},\mathbf{z_1}\rangle & \langle\mathbf{z_2},\mathbf{z_2}\rangle
\end{array}\right)
 $, we have that $\overleftrightarrow{\mathbf{z_1},\mathbf{z_2}}$ is elliptic, hyperbolic, or parabolic if and only if $G$ has two positive eigenvalues, one negative eigenvalue, or one zero eigenvalue, respectively. Moreover, $\overleftrightarrow{\mathbf{z_1},\mathbf{z_2}}$ is elliptic, hyperbolic, or parabolic if and only if the determinant of $G$ is positive, negative, or zero, respectively.
\end{lemma}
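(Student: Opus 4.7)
The plan is to recognize $G$ as the Gram matrix representing the restriction of the ambient Hermitian form $\langle\cdot,\cdot\rangle$ to the $2$-plane $\overleftrightarrow{\mathbf{z_1},\mathbf{z_2}}$, expressed in the basis $\{\mathbf{z_1},\mathbf{z_2}\}$. Since $\langle\mathbf{z_2},\mathbf{z_1}\rangle=\overline{\langle\mathbf{z_1},\mathbf{z_2}\rangle}$, the matrix $G$ is Hermitian, hence unitarily diagonalizable with real eigenvalues. The restriction of the form to $\overleftrightarrow{\mathbf{z_1},\mathbf{z_2}}$ is non-degenerate precisely when $G$ is invertible, and the signature of the restricted form coincides with the signature of $G$ by the classical Sylvester's law of inertia applied to the two-dimensional Hermitian space.

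From this, the eigenvalue characterization is immediate. Two positive eigenvalues mean the restricted form is positive definite, i.e.\ $\overleftrightarrow{\mathbf{z_1},\mathbf{z_2}}$ is elliptic; a zero eigenvalue means the restricted form is degenerate, i.e.\ parabolic; one positive and one negative eigenvalue mean the restricted form is non-degenerate and indefinite, i.e.\ hyperbolic. The only case not yet accounted for, \emph{a priori}, is that $G$ could have two negative eigenvalues, which would correspond to $\overleftrightarrow{\mathbf{z_1},\mathbf{z_2}}$ being a negative-definite $2$-plane.

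The key step is to rule this out using the global signature of the ambient space. Since $\langle\cdot,\cdot\rangle$ on $\mathbb{C}^{n,1}$ has signature $(n,1)$, any negative-definite subspace has dimension at most $1$; in particular, no $2$-plane can be negative definite. Therefore $G$ cannot have two negative eigenvalues, and the three cases (two positive, one positive and one negative, at least one zero) are exhaustive and mutually exclusive.

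Passing to the determinant, we have $\det G = \lambda_1\lambda_2$, where $\lambda_1,\lambda_2$ are the real eigenvalues of $G$. Given the exhaustive trichotomy above, $\det G >0$ is equivalent to both eigenvalues being positive (elliptic case), $\det G <0$ is equivalent to eigenvalues of opposite sign (hyperbolic case), and $\det G =0$ is equivalent to at least one eigenvalue vanishing (parabolic case). I do not anticipate a serious obstacle here; the one point that must be stated explicitly is the use of the signature $(n,1)$ to exclude a negative-definite $2$-plane, which is what forces the clean correspondence with the sign of $\det G$.
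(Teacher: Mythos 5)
Your proposal is correct and rests on the same two pillars as the paper's own argument: the identification of $G$ with the Gram matrix of the restricted Hermitian form (so that its signature is that of the form on $\overleftrightarrow{\mathbf{z_1},\mathbf{z_2}}$), and the exclusion of the two-negative-eigenvalue case because the ambient signature $(n,1)$ forbids a negative-definite $2$-plane. The only difference is presentational: the paper verifies the elliptic case by explicitly computing the eigenvalues and a Cauchy--Schwarz-type inequality, whereas you invoke congruence invariance of the signature directly, which handles all three cases uniformly.
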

 We give the proof of the elliptic case. The other cases use similar arguments, and we omit their proof:
\begin{proof}
The eigenvalues of $G$ are \begin{align*}
\lambda_1 &=\frac{\langle \mathbf{z_1},\mathbf{z_1}\rangle+\langle \mathbf{z_2},\mathbf{z_2}\rangle + \sqrt{(\langle \mathbf{z_1},\mathbf{z_1}\rangle-\langle \mathbf{z_2},\mathbf{z_2}\rangle)^2+4 \lvert \langle \mathbf{z_1},\mathbf{z_2}\rangle \rvert^2}}{2}, \; \mathrm{and} \\
\lambda_2 &=\frac{\langle \mathbf{z_1},\mathbf{z_1}\rangle+\langle \mathbf{z_2},\mathbf{z_2}\rangle - \sqrt{(\langle \mathbf{z_1},\mathbf{z_1}\rangle-\langle \mathbf{z_2},\mathbf{z_2}\rangle)^2+4 \lvert \langle \mathbf{z_1},\mathbf{z_2}\rangle \rvert^2}}{2}.
\end{align*}  
If $\overleftrightarrow{\mathbf{z_1},\mathbf{z_2}}$ is elliptic then $\lambda_1>0$ since $\langle\mathbf{z_1},\mathbf{z_1}\rangle,\langle\mathbf{z_2},\mathbf{z_2}\rangle>0$. Also, 
\begin{align*}\left\langle\mathbf{z_1}- \mathbf{z_2} \frac {\langle\mathbf{z_1},\mathbf{z_2}\rangle} {\langle\mathbf{z_2},\mathbf{z_2}\rangle},\mathbf{z_1}- \mathbf{z_2} \frac{\langle\mathbf{z_1},\mathbf{z_2}\rangle} {\langle\mathbf{z_2},\mathbf{z_2}\rangle}\right\rangle=\frac{\langle\mathbf{z_1},\mathbf{z_1}\rangle \langle\mathbf{z_2},\mathbf{z_2}\rangle -\lvert \langle\mathbf{z_1},\mathbf{z_2}\rangle \rvert ^2}{\langle\mathbf{z_2},\mathbf{z_2}\rangle}>0.
\end{align*} Thus both eigenvalues and the determinant of $G$ are positive.  In fact, the inequality $\langle\mathbf{z_1},\mathbf{z_1}\rangle\langle\mathbf{z_2},\mathbf{z_2}\rangle -\lvert \langle\mathbf{z_1},\mathbf{z_2}\rangle \rvert ^2>0$ is  the Cauchy's inequality for $\mathbf{z_1}$ and $\mathbf{z_2}$, and this inequality holds for elliptic spaces. Conversely, if $\mathrm{det}(G)=\langle\mathbf{z_1},\mathbf{z_1}\rangle \langle\mathbf{z_2},\mathbf{z_2}\rangle -\lvert \langle\mathbf{z_1},\mathbf{z_2}\rangle \rvert ^2>0$, then the eigenvalues are both negative or both positive. In the first case, $G$ is similar to $-I$, thus $\overleftrightarrow{\mathbf{z_1},\mathbf{z_2}}$ is  negative definite, a contradiction. So, both eigenvalues are positive, $G$ is similar to $I$ and  $\overleftrightarrow{\mathbf{z_1},\mathbf{z_2}}$ is positive definite.       
\end{proof}

\section{A real $(n-1)$-sphere in $\mathbb{P}_{\mathbb{C}}^{n}$}
\label{sec3}

We extend the work of Cano et al. \cite{cano2016action}, for $n>2$, in the sense that given $S$ a $\mathbb{R}-$sphere of highest dimension in $\mathbb{P}_{\mathbb{C}}^{n}$, we describe the set $\Lambda$ defined as the union of all complex hyperplanes tangent to $\partial\mathbb{H}_{\mathbb{C}}^{n}$ at points in $S$, also we show that $\Omega:=\mathbb{P}_{\mathbb{C}}^{n} \setminus \Lambda$ is connected. 

Consider the real $(n-1)$-sphere 
\begin{equation}\notag\partial \mathbb{H}_{\mathbb{R}}^n=\partial \mathbb{H}_{\mathbb{C}}^n \cap  \mathbb{P}_{\mathbb{R}}^n,
\end{equation} 
where $\mathbb{P}_{\mathbb{R}}^n=\{[r_1:r_2:\cdots : r_{n+1}] \in \mathbb{P}_{\mathbb{C}}^n: r_j \in \mathbb{R}, \; 1 \leq j \leq n+1 \}$. It is a well known fact that all the other real $(n-1)$-spheres are obtained by the usual action of $\mathrm{PU}(n,1)$ on $\partial \mathbb{H}_{\mathbb{C}}^n$, see \cite{goldman1999complex}.  \\
Given $p \in \mathbb{P}_{\mathbb{C}}^n$, we denote by $H_p$ the projective hyperplane 
\begin{equation}\notag
\{w \in \mathbb{P}_{\mathbb{C}}^n : p_1 \overline{w_1} + \cdots +p_n \overline{w_n} -p_{n+1} \overline{w_{n+1}}=0\}.
\end{equation}
 Then we can write the set $\Lambda$ as the set: 
\begin{equation}\notag
\underset{p\in\partial\mathbb{H}_{\mathbb{R}}^{n}}{\bigcup}H_{p}.
\end{equation}

The next Lemma is just a restatement in the $n$-dimensional case of the part $(i)$ of Proposition 4.1 in Barrera et al. \cite{barrera2019chains}.

\begin{lemma} \label{le:la} 
The set $\Lambda$ can be written as:
\begin{equation}\notag
\{z \in \mathbb{P}_{\mathbb{C}}^n:H_z \cap \partial\mathbb{H}_{\mathbb{R}}^{n} \neq \emptyset\}.
\end{equation}
\end{lemma}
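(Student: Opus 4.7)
The plan is to exploit the duality induced by the Hermitian form: the defining equation of $H_p$, namely $p_1 \overline{w_1} + \cdots + p_n \overline{w_n} - p_{n+1}\overline{w_{n+1}} = 0$, is symmetric in $p$ and $w$ in the following sense. If we write it as $\langle p, w\rangle = 0$ (with the convention of the paper), then taking complex conjugates yields $\langle w, p\rangle = 0$. Hence, for any two points $p, z \in \mathbb{P}_{\mathbb{C}}^n$, we have the pointwise equivalence
\begin{equation}\notag
z \in H_p \iff \langle p, z\rangle = 0 \iff \langle z, p\rangle = 0 \iff p \in H_z.
\end{equation}

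Once this duality is recorded, the proof of the lemma is a direct unwinding of definitions. First I would rewrite $\Lambda$ using the given description:
\begin{equation}\notag
z \in \Lambda \iff z \in \bigcup_{p \in \partial \mathbb{H}_{\mathbb{R}}^n} H_p \iff \exists\, p \in \partial \mathbb{H}_{\mathbb{R}}^n \text{ such that } z \in H_p.
\end{equation}
Then I would apply the duality observation to rewrite the condition $z \in H_p$ as $p \in H_z$, obtaining
\begin{equation}\notag
z \in \Lambda \iff \exists\, p \in \partial \mathbb{H}_{\mathbb{R}}^n \text{ such that } p \in H_z \iff H_z \cap \partial \mathbb{H}_{\mathbb{R}}^n \neq \emptyset,
\end{equation}
which is exactly the claimed equality.

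There is no real obstacle here; the result is essentially the standard projective polarity with respect to the Hermitian form of signature $(n,1)$, restricted to the real boundary sphere $\partial \mathbb{H}_{\mathbb{R}}^n$. The only small care required is to note that $H_p$ is well defined on projective classes (scaling $p$ by $\alpha \in \mathbb{C}^*$ multiplies the defining equation by $\overline{\alpha}$, so its zero set is unchanged), and symmetrically the condition $z \in H_p$ is independent of representatives for both $z$ and $p$, so the equivalence passes cleanly to $\mathbb{P}_{\mathbb{C}}^n$.
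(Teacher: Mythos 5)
Your proof is correct and is essentially the same argument the paper gives: the paper's proof is the one-line observation that $z\in H_w$ if and only if $w\in H_z$, which is exactly the Hermitian duality you record (obtained by conjugating the defining equation). Your version just spells out the well-definedness on projective classes, which the paper leaves implicit.
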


\begin{proof}
Consider $z \in \mathbb{P}_{\mathbb{C}}^n$, $z$ lies in $\Lambda$ if and only if there is $w$ in $\partial\mathbb{H}_{\mathbb{R}}^{n}$ such that $z$ lies in  $H_w$, that is, if and only if $w$ lies in  $H_z$.
\end{proof}

We use this characterization to determine when the projectivization of a vector $\mathbf{z}=(z_1,\ldots,z_{n+1}) \in \mathbb{C}^{n,1} \setminus \{\mathbf{0}\}$ lies in $\Lambda$. 

\begin{lemma} \label{le:1}
Let $\mathbf{z}$ be a vector in $\mathbb{C}^{n,1} \setminus \{\mathbf{0}\}$. The following sentences are equivalent: 
\begin{enumerate}
	\item The vectors $\mathbf{z}$ and $\overline{\mathbf{z}}$ are linearly dependent, 
	\item  the complex numbers $z_j \overline{z_k}-z_k \overline{z_j}$ vanish for $1 \leq j,k \leq n+1$,
	\item the point $z=[\mathbf{z}]$, lies in $\mathbb{P}_{\mathbb{R}}^n$.
\end{enumerate}
 \end{lemma}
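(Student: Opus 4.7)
The plan is to prove the equivalences (1) $\Leftrightarrow$ (2) and (1) $\Leftrightarrow$ (3), since each step is short and relies on elementary linear algebra applied to the pair $\mathbf{z},\overline{\mathbf{z}}$.

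For (1) $\Leftrightarrow$ (2), I would recall the standard criterion that two vectors in $\mathbb{C}^{n+1}$ are linearly dependent if and only if every $2 \times 2$ minor of the $2 \times (n+1)$ matrix formed by stacking them vanishes. Applying this to the matrix with rows $\mathbf{z}$ and $\overline{\mathbf{z}}$, the minor indexed by columns $j$ and $k$ is exactly $z_j\overline{z_k} - z_k\overline{z_j}$, which gives the equivalence on the nose. I would note that the nontrivial direction (vanishing minors $\Rightarrow$ dependence) requires picking an index $j_0$ with $z_{j_0} \neq 0$ and checking that $\overline{\mathbf{z}} = (\overline{z_{j_0}}/z_{j_0})\,\mathbf{z}$.

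For (3) $\Rightarrow$ (1), I would simply observe that if $[\mathbf{z}] \in \mathbb{P}_{\mathbb{R}}^n$, then $\mathbf{z} = \alpha\mathbf{r}$ for some $\alpha \in \mathbb{C}^*$ and $\mathbf{r} \in \mathbb{R}^{n+1}$, so $\overline{\mathbf{z}} = (\overline{\alpha}/\alpha)\,\mathbf{z}$. The slightly more delicate direction is (1) $\Rightarrow$ (3). Assume $\overline{\mathbf{z}} = \lambda\mathbf{z}$ for some $\lambda \in \mathbb{C}^*$. Taking conjugates gives $\mathbf{z} = \overline{\lambda}\,\overline{\mathbf{z}} = |\lambda|^2\mathbf{z}$, so $|\lambda|=1$ and I can write $\lambda = e^{i\theta}$. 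Setting $\mathbf{w} := e^{i\theta/2}\mathbf{z}$, a direct computation yields $\overline{\mathbf{w}} = e^{-i\theta/2}\overline{\mathbf{z}} = e^{-i\theta/2}e^{i\theta}\mathbf{z} = \mathbf{w}$, so $\mathbf{w} \in \mathbb{R}^{n+1}$ and $[\mathbf{z}] = [\mathbf{w}] \in \mathbb{P}_{\mathbb{R}}^n$.

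The only mild obstacle is the choice of the scalar $e^{i\theta/2}$ in the last step; everything else is routine. Once these three implications are in hand, the lemma follows immediately.
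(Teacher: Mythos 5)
Your proposal is correct and uses essentially the same elementary arguments as the paper: the vanishing of the $2\times 2$ minors $z_j\overline{z_k}-z_k\overline{z_j}$ together with the choice of a nonzero coordinate for $(2)\Rightarrow(1)$, and the observation that $\mathbf{z}$ is a complex scalar multiple of a real vector for the implications involving $(3)$. The only differences are organizational (you prove the two biconditionals $(1)\Leftrightarrow(2)$ and $(1)\Leftrightarrow(3)$ rather than the paper's cyclic chain $(1)\Rightarrow(3)\Rightarrow(2)\Rightarrow(1)$), and your $(1)\Rightarrow(3)$ step spells out the $|\lambda|=1$ and $e^{i\theta/2}$ normalization that the paper compresses into the single sentence ``$z$ is invariant under complex conjugation.''
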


\begin{proof}
 The third statement follows from the first one, since if $\mathbf{z}$ and $\overline{\mathbf{z}}$ are linearly dependent, then $z$ is invariant under complex conjugation, that is, $z$ belongs to $\mathbb{P}_{\mathbb{R}}^n$.\\

Now, if the third statement holds, then there are a $\mathbf{w}$ in $\mathbb{R}^{n,1}$ and an $\alpha \in \mathbb{C}^* $ such that $\mathbf{z}=\alpha \mathbf{w}$, so $z_j \overline{z_k}-z_k \overline{z_j}$ is equal to $\lvert \alpha \rvert^2 (w_j w_k-w_k w_j)=0$ for every $1 \leq j,k \leq n+1$.\\

Finally, we prove that the second statement implies the linear dependence of $\mathbf{z}$ and $\overline{\mathbf{z}}$. Without loss of generality, we suppose that $z_1 \neq 0$, so $\overline{z_k}=z_k \frac{\overline{z_1}}{z_1}$ for $1 \leq k \leq n+1$; in other words, there is $\alpha= \frac{\overline{z_1}}{z_1}$ such that $\mathbf{\overline{z}}=\alpha \mathbf{z}$.\\ 
\end{proof}

Lemmas \ref{le:la} and \ref{le:1} imply that:

\begin{proposition} \label{pro:1}
A point $z=[\mathbf{z}]$ lies in $\Lambda$ if and only if there is  $\mathbf{w}\in\mathbb{C}^{n,1}$ satisfying the following three conditions:
\begin{enumerate}
	\item  $w=[\mathbf{w}]\in H_z$, 
	\item $\mathbf{w}$ and $\overline{\mathbf{w}}$ are linearly dependent, and 
	\item $\langle\mathbf{w},\mathbf{w}\rangle=0$.
\end{enumerate}
\end{proposition}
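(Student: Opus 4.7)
The plan is to simply unpack the definition of $\partial \mathbb{H}_{\mathbb{R}}^n$ and apply the two lemmas just proved, since the three conditions in the statement are precisely a term-by-term translation of the condition ``there exists $w \in H_z \cap \partial \mathbb{H}_{\mathbb{R}}^n$'' that appeared in Lemma \ref{le:la}.

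First I would invoke Lemma \ref{le:la} to reduce membership $z \in \Lambda$ to the existence of a point $w \in \partial \mathbb{H}_{\mathbb{R}}^n$ with $w \in H_z$. Recalling that $\partial \mathbb{H}_{\mathbb{R}}^n = \partial \mathbb{H}_{\mathbb{C}}^n \cap \mathbb{P}_{\mathbb{R}}^n$, the condition $w \in \partial \mathbb{H}_{\mathbb{R}}^n$ splits into the two requirements $w \in \partial \mathbb{H}_{\mathbb{C}}^n$ and $w \in \mathbb{P}_{\mathbb{R}}^n$. The first is, by definition of $V_0$ and the model of $\mathbb{H}_{\mathbb{C}}^n$ from Section \ref{subsec1}, equivalent to $\langle \mathbf{w}, \mathbf{w} \rangle = 0$, which is condition (3). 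For the second, I would invoke the equivalence $(1) \Leftrightarrow (3)$ of Lemma \ref{le:1}, which gives $w \in \mathbb{P}_{\mathbb{R}}^n$ if and only if $\mathbf{w}$ and $\overline{\mathbf{w}}$ are linearly dependent, that is condition (2).

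Putting the three translations together yields the equivalence claimed in the proposition; condition (1) is the hypothesis $w \in H_z$ transferred to any lift $\mathbf{w}$, which is well-defined because the defining equation of $H_z$ is homogeneous in $\mathbf{w}$. There is really no obstacle to this argument: the proposition is a bookkeeping lemma that packages the two previous lemmas into a criterion phrased entirely at the level of representatives in $\mathbb{C}^{n,1}$, and I would expect the whole proof to be two or three lines.
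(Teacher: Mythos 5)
Your proof is correct and is exactly the paper's argument: the paper states Proposition \ref{pro:1} as an immediate consequence of Lemmas \ref{le:la} and \ref{le:1}, which is precisely the unpacking of $\partial\mathbb{H}_{\mathbb{R}}^{n}=\partial\mathbb{H}_{\mathbb{C}}^{n}\cap\mathbb{P}_{\mathbb{R}}^{n}$ that you carry out. No gaps.
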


The first two conditions above give a system of equations that can be interpreted geometrically. Actually, we can write the condition 1 as, 
	\begin{equation} \label{eq:1}
	z_1\overline{w_1}+ \cdots z_n\overline{w_n}-z_{n+1} \overline{w_{n+1}} =0,
	\end{equation}
 and using the condition 2, we can rewrite it in the form,  

\begin{equation} \label{eq:2}
	z_1 w_1+ \cdots z_n w_n-z_{n+1} w_{n+1} =0,
	\end{equation}

additionally, taking the conjugate of Equation \eqref{eq:1} we obtain,

\begin{equation}  \label{eq:3*}
	\overline{z_1} w_1+ \cdots \overline{z_n} w_n- \overline{z_{n+1}} w_{n+1} =0.
	\end{equation}  

Equations \eqref{eq:2} and \eqref{eq:3*} imply that $\mathbf{w}$ belongs to the orthogonal hyperplane to $\mathbf{z}$ and to the orthogonal hyperplane to $\overline{\mathbf{z}}$, respectively. If we denote by $\mathbf{H_p}$ the hyperplane orthogonal to a vector $\mathbf{p}$ in $\mathbb{C}^{n,1}$, then $\mathbf{w}$ belongs to the intersection $\mathbf{H_z} \cap \mathbf{H}_{\overline{\mathbf{z}}}$.  

\begin{proposition}
There are only two possibilities for the intersection $\mathbf{H_z} \cap \mathbf{H}_{\overline{\mathbf{z}}}$:   
\begin{itemize}
	\item[a)] The intersection $\mathbf{H_z} \cap \mathbf{H}_{\overline{\mathbf{z}}}$ is the hyperplane $\mathbf{H_z}$. It occurs if and only if $\mathbf{z}$ and $\overline{\mathbf{z}}$ are linearly dependent;
	\item[b)] the intersection $\mathbf{H_z} \cap \mathbf{H}_{\overline{\mathbf{z}}}$ is a codimension 2 subspace, and it happens if and only if $\mathbf{z}$ and $\overline{\mathbf{z}}$ are linearly independent.  
\end{itemize}
\end{proposition}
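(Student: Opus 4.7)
The plan is to reduce everything to standard linear algebra, exploiting the fact that the Hermitian form $\langle\cdot,\cdot\rangle$ on $\mathbb{C}^{n,1}$ is non-degenerate so that the assignment $\mathbf{v}\mapsto \mathbf{H_v}$ behaves as a bijection between rays and hyperplanes.

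First I would observe that, since $\mathbf{z}\neq\mathbf{0}$, both $\mathbf{H_z}$ and $\mathbf{H}_{\overline{\mathbf{z}}}$ are genuine hyperplanes, i.e.\ subspaces of dimension $n$ in $\mathbb{C}^{n,1}$. The Grassmann dimension formula then reads
\[
\dim(\mathbf{H_z}\cap\mathbf{H}_{\overline{\mathbf{z}}})
 = 2n - \dim(\mathbf{H_z}+\mathbf{H}_{\overline{\mathbf{z}}}).
\]
The sum $\mathbf{H_z}+\mathbf{H}_{\overline{\mathbf{z}}}$ contains $\mathbf{H_z}$, so its dimension is either $n$ or $n+1$. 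In the first case the two hyperplanes coincide and the intersection is $\mathbf{H_z}$ itself; in the second case the intersection has dimension $n-1$, i.e.\ codimension $2$. This already produces the dichotomy stated in (a) and (b); what remains is to identify when each alternative occurs.

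Next I would show that $\mathbf{H_z}=\mathbf{H}_{\overline{\mathbf{z}}}$ is equivalent to the linear dependence of $\mathbf{z}$ and $\overline{\mathbf{z}}$. Because the Hermitian form is non-degenerate, the orthogonal complement of any hyperplane $\mathbf{H_v}$ is one-dimensional; and since $\langle \mathbf{w},\mathbf{v}\rangle=0$ for every $\mathbf{w}\in\mathbf{H_v}$, this complement is exactly $\mathbb{C}\mathbf{v}$. Hence
\[
\mathbf{H_z}=\mathbf{H}_{\overline{\mathbf{z}}}
\iff (\mathbf{H_z})^{\perp}=(\mathbf{H}_{\overline{\mathbf{z}}})^{\perp}
\iff \mathbb{C}\mathbf{z}=\mathbb{C}\overline{\mathbf{z}},
\]
which is precisely the linear dependence of $\mathbf{z}$ and $\overline{\mathbf{z}}$. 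Conversely, if the two vectors are linearly independent, the previous equivalence forces $\mathbf{H_z}\neq \mathbf{H}_{\overline{\mathbf{z}}}$, so the sum has dimension $n+1$ and the intersection has codimension $2$. Combining the two steps gives both (a) and (b).

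There is no real obstacle: the argument is essentially linear algebra plus non-degeneracy of $\langle\cdot,\cdot\rangle$. The only subtle point to be careful about is that the correspondence $\mathbf{v}\mapsto\mathbf{H_v}$ is conjugate-linear in $\mathbf{v}$, so one should check that $(\mathbf{H_v})^{\perp}=\mathbb{C}\mathbf{v}$ and not $\mathbb{C}\overline{\mathbf{v}}$; but this is immediate from the sesquilinearity convention $\langle \alpha\mathbf{w},\mathbf{v}\rangle=\alpha\langle \mathbf{w},\mathbf{v}\rangle$.
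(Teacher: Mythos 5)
Your argument is correct and complete. The paper states this proposition without any proof, treating it as elementary; your dimension count via the Grassmann formula, combined with the identification $(\mathbf{H_v})^{\perp}=\mathbb{C}\mathbf{v}$ (valid precisely because the Hermitian form is non-degenerate, so that $(W^{\perp})^{\perp}=W$), is the standard argument the authors evidently had in mind, and your remark on the conjugate-linearity of $\mathbf{v}\mapsto\mathbf{H_v}$ is an appropriate precaution.
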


In either case, the intersection $\mathbf{H_z} \cap \mathbf{H}_{\overline{\mathbf{z}}}$ is preserved by conjugation, so, $\mathbf{H_z} \cap \mathbf{H}_{\overline{\mathbf{z}}}$ is either the complexification of a hyperplane in $\mathbb{R}^{n,1}$ or the complexification of a codimension 2 subspace in $\mathbb{R}^{n,1}$.

\subsection{Case 1: $\mathbf{z}$ and $\overline{\mathbf{z}}$ linearly independent}
\label{subsec3}

We first note that the plane $\overleftrightarrow{\mathbf{z},\mathbf{\overline{z}}}$ spanned by $\mathbf{z}$ and $\mathbf{\overline{z}}$ is orthogonal to $\mathbf{H_z} \cap \mathbf{H}_{\overline{\mathbf{z}}}$. Moreover, $\overleftrightarrow{\mathbf{z},\mathbf{\overline{z}}}$ is the orthogonal complement of $\mathbf{H_z} \cap \mathbf{H}_{\overline{\mathbf{z}}}$. 

\begin{lemma} \label{le:phe}
Let $\mathbf{z}$ be a vector in $\mathbb{C}^{n,1}$ such that $\mathbf{z}$ and $\overline{\mathbf{z}}$ are linearly independent. The following statements hold:
\begin{itemize}
\item [a)] The set $\mathbf{H_z} \cap \mathbf{H}_{\overline{\mathbf{z}}} \setminus \{\mathbf{0}\}$ is contained in $V_+$ if and only if $\overleftrightarrow{\mathbf{z},\mathbf{\overline{z}}}$ has a negative vector and $(\mathbf{H_z} \cap \mathbf{H}_{\overline{\mathbf{z}}}) \cap \overleftrightarrow{\mathbf{z},\mathbf{\overline{z}}}=\{\mathbf{0}\}$;
\item [b)] the set $\overleftrightarrow{\mathbf{z},\mathbf{\overline{z}}} \setminus\{\mathbf{0}\}$  is contained in $V_+$ if and only if $\mathbf{H_z} \cap \mathbf{H}_{\overline{\mathbf{z}}}$ has a negative vector and  $(\mathbf{H_z} \cap \mathbf{H}_{\overline{\mathbf{z}}}) \cap \overleftrightarrow{\mathbf{z},\mathbf{\overline{z}}}=\{\mathbf{0}\}$;
\item [c)] the space $\mathbf{H_z} \cap \mathbf{H}_{\overline{\mathbf{z}}}$ is parabolic if and only if $\overleftrightarrow{\mathbf{z},\mathbf{\overline{z}}}$ is parabolic.
\end{itemize} 
\end{lemma}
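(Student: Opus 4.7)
The plan is to exploit the observation, noted just before the lemma, that $\mathbf{H_z} \cap \mathbf{H}_{\overline{\mathbf{z}}}$ and $\overleftrightarrow{\mathbf{z},\mathbf{\overline{z}}}$ are mutual orthogonal complements in $\mathbb{C}^{n,1}$, and that the ambient Hermitian form has signature $(n,1)$. The three statements then become a bookkeeping exercise about how the signature distributes between a subspace and its orthogonal complement, together with the standard criterion for non-degeneracy of a restricted form.

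The key technical point I would establish first is the equivalence: the condition $(\mathbf{H_z} \cap \mathbf{H}_{\overline{\mathbf{z}}}) \cap \overleftrightarrow{\mathbf{z},\mathbf{\overline{z}}}=\{\mathbf{0}\}$ holds if and only if both subspaces are non-degenerate. Indeed, for any subspace $W$ of $\mathbb{C}^{n,1}$ the radical of the restricted form equals $W \cap W^{\perp}$; since here $(\mathbf{H_z} \cap \mathbf{H}_{\overline{\mathbf{z}}})^{\perp}=\overleftrightarrow{\mathbf{z},\mathbf{\overline{z}}}$ and, conversely, $\overleftrightarrow{\mathbf{z},\mathbf{\overline{z}}}^{\perp}=\mathbf{H_z} \cap \mathbf{H}_{\overline{\mathbf{z}}}$ (using that the ambient form is non-degenerate), the displayed intersection is precisely the common radical of the two subspaces. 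This immediately yields part (c): one subspace is parabolic if and only if this common radical is non-trivial if and only if the other subspace is parabolic.

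For parts (a) and (b), I would combine the above with the additive signature identity $\operatorname{sig}(W)+\operatorname{sig}(W^{\perp})=(n,1)$, valid whenever $W$ is non-degenerate. If $\mathbf{H_z} \cap \mathbf{H}_{\overline{\mathbf{z}}}\setminus\{\mathbf{0}\}\subset V_{+}$, this subspace is positive definite of signature $(n-1,0)$, hence non-degenerate (so the intersection condition holds), and the orthogonal complement $\overleftrightarrow{\mathbf{z},\mathbf{\overline{z}}}$ has signature $(1,1)$, which in particular contains a negative vector. Conversely, if $\overleftrightarrow{\mathbf{z},\mathbf{\overline{z}}}$ contains a negative vector and the intersection condition holds, this $2$-plane is non-degenerate with at least one negative direction, forcing signature $(1,1)$; its orthogonal complement $\mathbf{H_z} \cap \mathbf{H}_{\overline{\mathbf{z}}}$ must then have signature $(n-1,0)$, i.e.\ be positive definite. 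This proves (a), and (b) follows by the mirror-image argument with the roles of the two subspaces swapped.

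The only delicate point, and the main obstacle I anticipate, is the equivalence of the intersection condition with non-degeneracy of both subspaces: this has to be extracted carefully from the definition of the radical together with the identity $(W^{\perp})^{\perp}=W$, which in turn relies on non-degeneracy of the ambient form. Once that is in hand, the rest reduces to invoking the signature identity, and for the $2$-dimensional side one may also appeal directly to Lemma \ref{le:sy} to read off the elliptic/hyperbolic/parabolic trichotomy of $\overleftrightarrow{\mathbf{z},\mathbf{\overline{z}}}$ from the Gram determinant of $\mathbf{z}$ and $\overline{\mathbf{z}}$.
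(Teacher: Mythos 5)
Your proposal is correct and follows essentially the same route as the paper's own proof: both identify the intersection $(\mathbf{H_z} \cap \mathbf{H}_{\overline{\mathbf{z}}}) \cap \overleftrightarrow{\mathbf{z},\mathbf{\overline{z}}}$ as the common radical (so triviality of the intersection is equivalent to non-degeneracy of both subspaces, giving part (c)), and then deduce (a) and (b) from the direct-sum decomposition of $\mathbb{C}^{n,1}$ together with the signature count $(n,1)$. The only cosmetic difference is that you phrase the counting step as an explicit additive signature identity, whereas the paper argues directly from the decomposition.
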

\begin{proof}
This is a particular case of the well known fact that the orthogonal complement of a hyperbolic, elliptic or parabolic subspace is (respectively) elliptic, hyperbolic or parabolic; however, we provide a proof.\\

We first note that the subspaces $\mathbf{H_z} \cap \mathbf{H}_{\overline{\mathbf{z}}}$ and $\overleftrightarrow{\mathbf{z},\mathbf{\overline{z}}}$ are not parabolic if and only if the intersection $(\mathbf{H_z} \cap \mathbf{H}_{\overline{\mathbf{z}}}) \cap \overleftrightarrow{\mathbf{z},\mathbf{\overline{z}}}$ is $\{\mathbf{0}\}$, and it happens, if and only if $\mathbb{C}^{n,1}$ is decomposed as the direct sum of $\mathbf{H_z} \cap \mathbf{H}_{\overline{\mathbf{z}}}$ and $\overleftrightarrow{\mathbf{z},\mathbf{\overline{z}}}$.\\
a) Given that $\mathbf{H_z} \cap \mathbf{H}_{\overline{\mathbf{z}}} \setminus \{\mathbf{0}\} \subset V_+$ then $\mathbb{C}^{n,1}$ is the direct sum of $\mathbf{H_z} \cap \mathbf{H}_{\overline{\mathbf{z}}}$ and $\overleftrightarrow{\mathbf{z},\mathbf{\overline{z}}}$, thus $\overleftrightarrow{\mathbf{z},\mathbf{\overline{z}}}$ has a negative vector. Conversely, $\mathbb{C}^{n,1}= (\mathbf{H_z} \cap \mathbf{H}_{\overline{\mathbf{z}}}) \oplus \overleftrightarrow{\mathbf{z},\mathbf{\overline{z}}}$ and $\overleftrightarrow{\mathbf{z},\mathbf{\overline{z}}}$ has a negative vector, since the total space has signature $(n,1)$ then  $\mathbf{H_z} \cap \mathbf{H}_{\overline{\mathbf{z}}} \setminus \{\mathbf{0}\} \subset V_+$. \\
b) It follows by the same arguments used in a).\\
c) $\mathbf{H_z} \cap \mathbf{H}_{\overline{\mathbf{z}}}$ is parabolic if and only if $(\mathbf{H_z} \cap \mathbf{H}_{\overline{\mathbf{z}}}) \cap \overleftrightarrow{\mathbf{z},\mathbf{\overline{z}}} \neq \{\mathbf{0}\}$, and it happens if and only if $\overleftrightarrow{\mathbf{z},\mathbf{\overline{z}}}$ is parabolic. 
 \end{proof}

\begin{corollary}
 If the plane $\overleftrightarrow{\mathbf{z},\mathbf{\overline{z}}}$ is elliptic or parabolic then, $\mathbf{z}$ is positive. 
\end{corollary}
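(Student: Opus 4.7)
My plan splits along the two hypotheses. The elliptic case is immediate: by definition the Hermitian form restricted to $\overleftrightarrow{\mathbf{z},\overline{\mathbf{z}}}$ is positive definite, so every nonzero vector of the plane, in particular $\mathbf{z}$, is positive.

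The parabolic case is where the real work lies, and my main tool is the identity $\langle\overline{\mathbf{z}},\overline{\mathbf{z}}\rangle=\langle\mathbf{z},\mathbf{z}\rangle$, which follows from the entrywise-conjugate definition of the Hermitian form together with the reality of the diagonal. Using Lemma~\ref{le:sy}, parabolicity amounts to $\det G=0$, which in our symmetric situation becomes $\langle\mathbf{z},\mathbf{z}\rangle^{2}=\lvert\langle\mathbf{z},\overline{\mathbf{z}}\rangle\rvert^{2}$. It remains to rule out the possibilities $\langle\mathbf{z},\mathbf{z}\rangle=0$ and $\langle\mathbf{z},\mathbf{z}\rangle<0$. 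In the first case the identity forces $\langle\mathbf{z},\overline{\mathbf{z}}\rangle=0$ as well, so every entry of $G$ vanishes and $\overleftrightarrow{\mathbf{z},\overline{\mathbf{z}}}$ is a two-dimensional totally isotropic subspace of $\mathbb{C}^{n,1}$, contradicting the fact that the Witt index of a form of signature $(n,1)$ is one. In the second case $\mathbf{z}$ is negative, so $\mathbf{z}^{\perp}$ splits off as a subspace of signature $(n,0)$, hence positive definite; but the parabolic (degenerate, not totally isotropic) plane $\overleftrightarrow{\mathbf{z},\overline{\mathbf{z}}}$ has a one-dimensional null radical spanned by some nonzero vector $\mathbf{v}$, and the inclusion $\mathbf{v}\in\mathbf{z}^{\perp}$ is incompatible with positive definiteness. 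Hence $\langle\mathbf{z},\mathbf{z}\rangle>0$.

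The main obstacle is recognizing that the bare word \emph{parabolic} (i.e.\ merely \emph{degenerate}) leaves in principle three possible signatures $(+,0)$, $(-,0)$, $(0,0)$ for a two-dimensional plane, and that the ambient signature $(n,1)$ is what rules out the latter two. Once this is isolated, Sylvester's law together with the symmetry $\langle\overline{\mathbf{z}},\overline{\mathbf{z}}\rangle=\langle\mathbf{z},\mathbf{z}\rangle$ reduces the entire question to a short sign analysis; no further computation with coordinates of $\mathbf{z}$ is needed.
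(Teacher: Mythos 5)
Your proof is correct; the paper states this corollary without any proof, and your argument supplies exactly the missing content, in particular the key point that a degenerate plane in signature $(n,1)$ can only be positive semidefinite with a one-dimensional radical not containing $\mathbf{z}$ (the totally isotropic alternative being excluded by the Witt index, and the negative or negative-semidefinite alternatives by the positive definiteness of the orthogonal complement of a negative vector). This stays entirely within the paper's own toolkit (Lemma \ref{le:sy}, the identity $\langle\overline{\mathbf{z}},\overline{\mathbf{z}}\rangle=\langle\mathbf{z},\mathbf{z}\rangle$, and the signature considerations behind Lemma \ref{le:phe}), so there is nothing to correct.
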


Now, we generalize the function $f$ given by Cano et al. \cite{cano2016action} by means of Sylvester's law of inertia. In order to define $f$, we note that the determinant of $G=\left( \begin{array}{cc}
\langle\mathbf{z},\mathbf{z}\rangle & \langle\mathbf{z},\overline{\mathbf{z}}\rangle\\
\langle\overline{\mathbf{z}},\mathbf{z}\rangle & \langle\overline{\mathbf{z}},\overline{\mathbf{z}}\rangle
\end{array} \right)$  is:
 \begin{align*}
\langle\mathbf{z},\mathbf{z}\rangle^2 -\lvert \langle\mathbf{z},\mathbf{\overline{z}}\rangle \rvert ^2={\displaystyle \sum_{j=1}^{n}}(z_{j}\overline{z_{n+1}}-z_{n+1}\overline{z_{j}})^{2}-\sum_{1\leq j<k \leq n}(z_{j}\overline{z_{k}}-z_{k}\overline{z_{j}})^{2}\\
={\displaystyle -\sum_{j=1}^{n}}4(x_jy_{n+1}-x_{n+1}y_{j})^{2}+\sum_{1\leq j<k \leq n} 4(x_jy_{k}-y_{k}x_{j})^{2}.
 \end{align*} 

Where $x_j=\mathrm{Re}(z_j)$ and $y_j=\mathrm{Im}(z_j)$. Thus, we define $f$ as follows: 
\begin{equation}\notag
f: \mathbb{C}^{n,1} \setminus \{\mathbf{0}\} \rightarrow \mathbb{R}  
\end{equation}
 \begin{equation} \label{eq:3} \notag
f(\mathbf{z})={\displaystyle \sum_{j=1}^{n}}4(x_jy_{n+1}-x_{n+1}y_{j})^{2}-\sum_{1\leq j<k \leq n} 4(x_jy_{k}-y_{k}x_{j})^{2}.
\end{equation}

The function $f$ allow us to characterize the set $\Lambda$ when 
$\mathbf{z}$ and $\mathbf{\overline{z}}$ are linearly independent, and $n>2$. 

\begin{lemma} \label{le:5} 
 Let $n>2$ be an integer. Consider $\mathbf{z}$ a vector in $\mathbb{C}^{n,1}$ such that $\mathbf{z}$ and $\mathbf{\overline{z}}$ are linearly independent. The projectivization $z=[\mathbf{z}]$ lies in $\Lambda$ if and only if $f(\mathbf{z}) \leq 0$.   
\end{lemma}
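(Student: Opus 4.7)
The plan is to combine Proposition \ref{pro:1} with the orthogonal decomposition $\mathbb{C}^{n,1}=\overleftrightarrow{\mathbf{z},\overline{\mathbf{z}}}\oplus(\mathbf{H}_z\cap\mathbf{H}_{\overline{\mathbf{z}}})$ and read off the signature of $\mathbf{H}_z\cap\mathbf{H}_{\overline{\mathbf{z}}}$ from Sylvester's law of inertia applied to the opposite summand $\overleftrightarrow{\mathbf{z},\overline{\mathbf{z}}}$. By Proposition \ref{pro:1} and the derivation of equations \eqref{eq:2} and \eqref{eq:3*}, the point $z=[\mathbf{z}]$ lies in $\Lambda$ iff there exists a nonzero $\mathbf{w}\in\mathbf{H}_z\cap\mathbf{H}_{\overline{\mathbf{z}}}$ with $\mathbf{w},\overline{\mathbf{w}}$ linearly dependent and $\langle\mathbf{w},\mathbf{w}\rangle=0$. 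After rescaling by a complex constant one may take $\mathbf{w}\in\mathbb{R}^{n,1}$, so the problem reduces to deciding whether the real subspace $R:=(\mathbf{H}_z\cap\mathbf{H}_{\overline{\mathbf{z}}})\cap\mathbb{R}^{n,1}$ contains a nonzero null vector for the standard real form of signature $(n,1)$.

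Since $\mathbf{H}_z\cap\mathbf{H}_{\overline{\mathbf{z}}}$ is conjugation-invariant (as already noted in the text), it is the complexification $R\otimes_{\mathbb{R}}\mathbb{C}$, so $\dim_{\mathbb{R}}R=n-1$; moreover any real orthogonal basis of $R$ is automatically a Hermitian orthogonal basis of $\mathbf{H}_z\cap\mathbf{H}_{\overline{\mathbf{z}}}$ with the same squared norms, so the real form on $R$ and the Hermitian form on $\mathbf{H}_z\cap\mathbf{H}_{\overline{\mathbf{z}}}$ have the same signature. Next, using that $\langle\mathbf{z},\mathbf{z}\rangle$ is real so $\langle\overline{\mathbf{z}},\overline{\mathbf{z}}\rangle=\langle\mathbf{z},\mathbf{z}\rangle$, one checks directly from the definition that $f(\mathbf{z})=-\det G$. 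Sylvester's law (Lemma \ref{le:sy}) then identifies $\overleftrightarrow{\mathbf{z},\overline{\mathbf{z}}}$ as elliptic, parabolic, or hyperbolic according as $f(\mathbf{z})<0$, $=0$, or $>0$, and Lemma \ref{le:phe} transfers this to $\mathbf{H}_z\cap\mathbf{H}_{\overline{\mathbf{z}}}$: its signature is $(n-2,1)$, degenerate, or $(n-1,0)$, respectively.

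Finishing the argument amounts to checking in each case whether $R$ contains a nonzero null vector. If $f(\mathbf{z})>0$, then $R$ has signature $(n-1,0)$, is positive definite, and contains no null vector, hence $z\notin\Lambda$. If $f(\mathbf{z})=0$, then $R$ is degenerate and any nonzero element of its radical is a real null vector, so $z\in\Lambda$. If $f(\mathbf{z})<0$, then $R$ has signature $(n-2,1)$ with both indices at least $1$ (this is where the hypothesis $n>2$ is used), and a standard continuity argument on the unit sphere of $R$ produces a real null vector, giving $z\in\Lambda$. Combining the three cases yields $z\in\Lambda$ iff $f(\mathbf{z})\leq 0$.

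The main delicate point, and the precise reason for the hypothesis $n>2$, is the case $f(\mathbf{z})<0$: the codimension-two real subspace $R$ must be large enough to carry both a positive and a negative direction. When $n=2$ the same signature degenerates to $(0,1)$, i.e.\ $R$ becomes negative definite and admits no null vector, which is exactly the reason the opposite criterion appears in \cite{cano2016action} for $m=n=2$.
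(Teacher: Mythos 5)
Your proof is correct and follows essentially the same route as the paper: Proposition \ref{pro:1} reduces membership in $\Lambda$ to the existence of a real null vector in $\mathbf{H}_z\cap\mathbf{H}_{\overline{\mathbf{z}}}$, whose signature is read off from $\overleftrightarrow{\mathbf{z},\overline{\mathbf{z}}}$ via Lemma \ref{le:phe} and Sylvester's law (Lemma \ref{le:sy}). You merely make explicit the step the paper leaves implicit, namely that ``$\mathbf{H}_z\cap\mathbf{H}_{\overline{\mathbf{z}}}$ hyperbolic or parabolic'' is equivalent to the existence of such a null vector, with the hypothesis $n>2$ ruling out the negative definite case --- exactly the point the paper addresses only in the remark following the lemma.
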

\begin{proof}
By the remark just below Proposition \ref{pro:1}, $z \in \Lambda$ if and only if $\mathbf{H_z} \cap \mathbf{H}_{\overline{\mathbf{z}}}$ is hyperbolic or parabolic, and by Lemma \ref{le:phe} it happens if and only if the plane $\overleftrightarrow{\mathbf{z},\mathbf{\overline{z}}}$ is elliptic or parabolic, and by Lemma \ref{le:sy}, the latter occurs, if and only if $f(\mathbf{z}) \leq 0$.  
\end{proof}

The Lemma \ref{le:5} exhibits a difference between the cases $n=2$ and $n>2$. We recall that if $n=2$, see Corollary 2.5 in Cano et al. \cite{cano2016action}, it is necessary that $f(\mathbf{z}) = 0$ to ensure that $z=[\mathbf{z}]$ lies in $\Lambda$. This behavior is due to that for $n=2$, $\mathbf{H_z} \cap \mathbf{H}_{\overline{\mathbf{z}}}$ is the complex line spanned by $i\mathbf{z} \boxtimes \mathbf{\overline{z}}$ and this line can be contained in $V_-$. In other words, $\mathbf{H_z} \cap \mathbf{H}_{\overline{\mathbf{z}}}$ can be a negative definite subspace because  it has dimension $1$. However in the case $n>2$, $\mathbf{H_z} \cap \mathbf{H}_{\overline{\mathbf{z}}}$ can not be a negative definite subspace because it has codimension $2$ and thus dimension greater than $1$.  
The case $n=2$ is a threshold at which the set $\Lambda$  ``grows'', in fact for $n>2$ it can be observed from inequality $f(\mathbf{z}) \leq 0$ that $\Lambda$ has non-empty interior.  

\subsection{Case 2: $\mathbf{z}$ and $\overline{\mathbf{z}}$ linearly dependent}
\label{subsec4}

If $\mathbf{z} \neq \mathbf{0}$, and satisfies that $\mathbf{z}$ and $\overline{\mathbf{z}}$ are linearly dependent, then by Lemma \ref{le:1}, $z=[\mathbf{z}] \in \mathbb{P}_{\mathbb{R}}^n$ and $f(\mathbf{z})=0$.  

\begin{lemma} \label{le:6}
Let $\mathbf{z}$ be a vector in $\mathbb{C}^{n,1}\setminus \{\mathbf{0}\}$ such that $\mathbf{z}$ and $\mathbf{\overline{z}}$ are linearly dependent. The projectivization $z=[\mathbf{z}]$ lies in $\Lambda$ if and only if $\langle\mathbf{z},\mathbf{z}\rangle \geq 0$.
\end{lemma}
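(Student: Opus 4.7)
The plan is to reduce everything to the real form of signature $(n,1)$ and apply the characterization in Proposition \ref{pro:1}. First I would invoke Lemma \ref{le:1}: since $\mathbf{z}$ and $\overline{\mathbf{z}}$ are linearly dependent, the class $z$ lies in $\mathbb{P}_{\mathbb{R}}^n$, so after replacing $\mathbf{z}$ by a suitable scalar multiple we may assume $\mathbf{z}\in\mathbb{R}^{n,1}\setminus\{\mathbf{0}\}$. Because rescaling by $\alpha\in\mathbb{C}^{*}$ multiplies $\langle\mathbf{z},\mathbf{z}\rangle$ by $|\alpha|^{2}>0$, the sign of $\langle\mathbf{z},\mathbf{z}\rangle$ is an invariant of the point $z$, so the statement is well posed.

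Next I would apply Proposition \ref{pro:1}: the point $z$ lies in $\Lambda$ iff there exists $\mathbf{w}\in\mathbb{C}^{n,1}\setminus\{\mathbf{0}\}$ with (1) $\langle\mathbf{w},\mathbf{z}\rangle=0$, (2) $\mathbf{w}$ and $\overline{\mathbf{w}}$ linearly dependent, and (3) $\langle\mathbf{w},\mathbf{w}\rangle=0$. Condition (2), together with Lemma \ref{le:1}, lets me assume $\mathbf{w}\in\mathbb{R}^{n,1}$ (again up to a complex scalar). With both $\mathbf{z}$ and $\mathbf{w}$ real, condition (1) becomes the real orthogonality relation with respect to the real Lorentzian form on $\mathbb{R}^{n,1}$, which has signature $(n,1)$. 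Thus the problem reduces to: when does the real hyperplane $\mathbf{z}^{\perp}\cap\mathbb{R}^{n,1}$ contain a nonzero null vector?

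I would then split into three cases by the sign of $\langle\mathbf{z},\mathbf{z}\rangle$. If $\langle\mathbf{z},\mathbf{z}\rangle=0$, then $\mathbf{z}\in\mathbf{z}^{\perp}$ is itself a real null vector, so the witness $\mathbf{w}=\mathbf{z}$ works; equivalently $z\in\partial\mathbb{H}_{\mathbb{R}}^{n}\subset\Lambda$. If $\langle\mathbf{z},\mathbf{z}\rangle>0$, then the restriction of the real form to $\mathbf{z}^{\perp}\cap\mathbb{R}^{n,1}$ is nondegenerate of signature $(n-1,1)$, hence this hyperplane contains a vector of negative norm and therefore also a nonzero null vector $\mathbf{w}$, yielding $z\in\Lambda$. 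If $\langle\mathbf{z},\mathbf{z}\rangle<0$, then the restriction to $\mathbf{z}^{\perp}\cap\mathbb{R}^{n,1}$ is positive definite of signature $(n,0)$, so contains no nonzero null vector; thus no witness $\mathbf{w}$ exists and $z\notin\Lambda$.

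The only delicate point is the reduction in step two: one has to observe that even though Proposition \ref{pro:1} quantifies over complex vectors $\mathbf{w}$, condition (2) forces $\mathbf{w}$ to have a real representative, and then condition (1) is equivalent to its real part (or, said differently, the complex equation $\langle\mathbf{w},\mathbf{z}\rangle=0$ for real $\mathbf{w},\mathbf{z}$ is just the real orthogonality equation). Once that reduction is in place, the three signature subcases are immediate from the standard behaviour of a nondegenerate indefinite real quadratic form restricted to a hyperplane.
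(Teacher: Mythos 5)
Your argument is correct, and it reaches the conclusion by a slightly different route than the paper. You run everything through the algebraic criterion of Proposition \ref{pro:1}: after using Lemma \ref{le:1} to replace $\mathbf{z}$ and the witness $\mathbf{w}$ by real representatives, you decide the existence of a null witness in $\mathbf{z}^{\perp}\cap\mathbb{R}^{n,1}$ by computing the signature of that real hyperplane ($(n-1,1)$ when $\langle\mathbf{z},\mathbf{z}\rangle>0$, positive definite when $\langle\mathbf{z},\mathbf{z}\rangle<0$). The paper instead argues geometrically: for the null case it notes $z\in\partial\mathbb{H}_{\mathbb{R}}^{n}\subset\Lambda$ exactly as you do, but for $\langle\mathbf{z},\mathbf{z}\rangle>0$ it observes that a point of $\mathbb{P}_{\mathbb{R}}^{n}\setminus\overline{\mathbb{H}_{\mathbb{R}}^{n}}$ lies on a real hyperplane tangent to $\partial\mathbb{H}_{\mathbb{R}}^{n}$, whose complexification is one of the hyperplanes $H_{p}$ making up $\Lambda$; and it dismisses the necessity direction ($z\in\Lambda\Rightarrow\langle\mathbf{z},\mathbf{z}\rangle\geq 0$) as ``straightforward,'' since tangent hyperplanes to $\partial\mathbb{H}_{\mathbb{C}}^{n}$ miss the open ball. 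These are dual views of the same fact --- the paper constructs the tangent hyperplane through $z$, you locate the point of tangency $w$ with $z\in H_{w}$ --- but your version has the merit of being self-contained and of actually proving the necessity direction (via the positive definiteness of $\mathbf{z}^{\perp}$) rather than asserting it. Your reduction of the complex witness to a real one, justified by noting that conditions (1) and (3) of Proposition \ref{pro:1} are insensitive to the scalar $\alpha$ relating $\mathbf{w}$ to a real vector, is exactly the delicate point and you handle it correctly.
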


\begin{proof}
The necessary condition is straightforward: if $z \in \Lambda$ then $\mathbf{z}$ is non-negative. Now, if $\mathbf{z}$ is null then $z \in \partial \mathbb{H}_{\mathbb{R}}^n \subset \Lambda$ because $z \in \mathbb{P}_{\mathbb{R}}^n$. If $\mathbf{z}$ is positive, then $z \in \mathbb{P}_{\mathbb{R}}^n \setminus \overline{\mathbb{H}_{\mathbb{R}}^n}$,  so there is a real projective hyperplane $H $ tangent to $\partial \mathbb{H}_{\mathbb{R}}^n$ that passes through $z$. Thus $z \in \Lambda$ because the complexification of $H$ is a projective hyperplane contained in $\Lambda$.  
\end{proof}

Note that Lemma \ref{le:6} does not depend of $n$ but Lemma \ref{le:5} does.  

\begin{proof} [Proof of Theorem \ref{teo:dos}] \label{proo:1}
For $n>2$ the Lemmas \ref{le:5} and \ref{le:6} imply that the set $\Lambda$ is the semi-algebraic subset of $\mathbb{P}_{\mathbb{C}}^n$ consisting of points $z=[\mathbf{z}]$ that satisfy the following pair of inequalities: 
\begin{align*}
1) &\; 0 \leq \langle\mathbf{z},\mathbf{z}\rangle= x_1^2+y_1^2+ \cdots +x_n^2+y_n^2-x_{n+1}^2-y_{n+1}^2 \\
2) & \; 0 \geq f(\mathbf{z})= {\displaystyle \sum_{j=1}^{n}}4(x_jy_{n+1}-x_{n+1}y_{j})^{2}-\sum_{1\leq j<k \leq n} 4(x_jy_{k}-y_{k}x_{j})^{2}.
\end{align*}
Thus $\Lambda$ has non-empty interior. If $n=2$, the result is the Corollary 2.5 in \cite{cano2016action}. In this case inequality 2 is indeed an equality.  
\end{proof} 

\begin{lemma} \label{le:7}
The function $f(\mathbf{z})$ is invariant under the usual action of $\mathrm{SO_+}(n,1)$
\end{lemma}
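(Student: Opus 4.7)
The plan is to avoid manipulating the real polynomial expression for $f$ directly, and instead use the algebraic characterization of $f$ in terms of the Hermitian form that was established just before its definition, namely
\begin{equation}\notag
f(\mathbf{z}) = \lvert \langle \mathbf{z}, \overline{\mathbf{z}} \rangle \rvert^{2} - \langle \mathbf{z}, \mathbf{z} \rangle^{2},
\end{equation}
which is simply $-\det(G)$ with $G$ the Gram matrix of $\{\mathbf{z},\overline{\mathbf{z}}\}$ relative to $\langle\cdot,\cdot\rangle$. Once $f$ is expressed this way, the statement reduces to checking two invariance properties simultaneously: invariance of the Hermitian form and compatibility of the group action with complex conjugation.

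First I would recall that in the $m=n$ setting of this section the embedding $\iota$ of \eqref{eq:0} is just the natural inclusion $\mathrm{SO}_+(n,1) \hookrightarrow \mathrm{SU}(n,1)$, since the block $I$ of size $(n-m)\times(n-m)$ is empty. In particular, any $g \in \mathrm{SO}_+(n,1)$ is a \emph{real} matrix that preserves the Hermitian form $\langle\cdot,\cdot\rangle$ of signature $(n,1)$.

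Next I would observe the two key identities. Because $g$ has real entries, conjugation commutes with its action:
\begin{equation}\notag
g\,\overline{\mathbf{z}} \;=\; \overline{g\,\mathbf{z}}.
\end{equation}
Because $g \in \mathrm{U}(n,1)$, it preserves $\langle\cdot,\cdot\rangle$, so
\begin{equation}\notag
\langle g\mathbf{z}, g\mathbf{z}\rangle = \langle \mathbf{z}, \mathbf{z}\rangle,
\qquad
\langle g\mathbf{z}, \overline{g\mathbf{z}}\rangle = \langle g\mathbf{z}, g\overline{\mathbf{z}}\rangle = \langle \mathbf{z}, \overline{\mathbf{z}}\rangle.
\end{equation}
Substituting into the displayed formula for $f$ yields $f(g\mathbf{z}) = f(\mathbf{z})$, completing the proof.

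There is no real obstacle in this argument; the only pitfall would be attempting to verify invariance directly from the sum-of-squares expression for $f$, which would obscure what is essentially a one-line consequence of $g$ being simultaneously Hermitian-isometric and real.
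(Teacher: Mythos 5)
Your proof is correct and rests on the same mechanism as the paper's: both reduce the claim to the facts that $g\in\mathrm{SO}_+(n,1)$ preserves the Hermitian form and, being real, commutes with complex conjugation. The only (cosmetic) difference is that you substitute into the Gram-determinant expression $f(\mathbf{z})=\lvert\langle\mathbf{z},\overline{\mathbf{z}}\rangle\rvert^{2}-\langle\mathbf{z},\mathbf{z}\rangle^{2}$, while the paper substitutes into the equivalent real/imaginary-part form $f(\mathbf{z})=4\langle\mathbf{x},\mathbf{y}\rangle^{2}-4\langle\mathbf{x},\mathbf{x}\rangle\langle\mathbf{y},\mathbf{y}\rangle$ with $\mathbf{x}=\mathrm{Re}(\mathbf{z})$, $\mathbf{y}=\mathrm{Im}(\mathbf{z})$.
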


\begin{proof}
\begin{align*}
f(\mathbf{z}) & ={\displaystyle \sum_{j=1}^{n}}4(x_jy_{n+1}-x_{n+1}y_{j})^{2}-\sum_{1\leq j<k \leq n} 4(x_jy_{k}-y_{k}x_{j})^{2}\\
& =4\langle\mathbf{x},\mathbf{y}\rangle^2-4\langle\mathbf{x},\mathbf{x}\rangle \langle\mathbf{y},\mathbf{y}\rangle,
\end{align*}
where $\mathbf{x}=\mathrm{Re}(\mathbf{z})$ and $\mathbf{y}=\mathrm{Im}(\mathbf{z})$. So, if $A$ lies in $\mathrm{SO_+}(n,1)$, then $A \mathbf{z}=A\mathbf{x} + i A\mathbf{y}$, and   
\begin{align*}
f(A\mathbf{z}) &=4\langle A\mathbf{x},A\mathbf{y}\rangle^2-4\langle A\mathbf{x},A\mathbf{x}\rangle \langle A\mathbf{y},A\mathbf{y}\rangle\\
&=4\langle\mathbf{x},\mathbf{y}\rangle^2-4\langle\mathbf{x},\mathbf{x}\rangle \langle\mathbf{y},\mathbf{y}\rangle\\
&=f(\mathbf{z}),
\end{align*}
in the second equality we used that $A$ preserves the Hermitian form $\langle \cdot, \cdot \rangle$.   
\end{proof}

\subsection{A partition of $\mathbb{P}_{\mathbb{C}}^{n}$}
\label{subsec5} 
In this subsection we will assume that $n>2$. If the function 
\begin{equation}\notag 
f(\mathbf{z})=\displaystyle -\sum_{j=1}^{n}(z_j\overline{z_{n+1}}-z_{n+1} \overline{z_{j}})^{2}+\sum_{1\leq j<k \leq n}(z_j \overline{z_{k}}-z_{k} \overline{z_{j}})^2
\end{equation}
 takes the value $0$ then there are two possibilities: the complex numbers $z_j \overline{z_{k}}-z_{k} \overline{z_{j}}$ vanish for all $j$ and $k$, so $\mathbf{z}$ and $\mathbf{\overline{z}}$ are linearly dependent; or $z_j \overline{z_{k}}-z_{k} \overline{z_{j}}$ is no zero for some $j$ and $k$, and therefore $\mathbf{z}$ and $\mathbf{\overline{z}}$ are linearly independent.\\
 The function $f$ gives a partition of $\mathbb{C}^{n,1} \setminus \{\mathbf{0}\}$ in the following sets:
 \begin{align*} 
 U_0 & =\{\mathbf{z} \in \mathbb{C}^{n,1} \setminus \{\mathbf{0}\}: f(\mathbf{z})=0\} \\
=& \{\mathbf{z} \in \mathbb{C}^{n,1} \setminus \{\mathbf{0}\} : \mathbf{z} \; \mathrm{and} \; \mathbf{\overline{z}} \; \mathrm{are\; linearly\; dependent}\} \cup \{ \mathbf{z} \in \mathbb{C}^{n,1} \setminus \{\mathbf{0}\} : \overleftrightarrow{\mathbf{z},\mathbf{\overline{z}}} \; \mathrm{is \; parabolic}\}, \\
 U_+ & =\{\mathbf{z} \in \mathbb{C}^{n,1}: f(\mathbf{z})>0\}=\{ \mathbf{z} \in \mathbb{C}^{n,1}: \overleftrightarrow{\mathbf{z},\mathbf{\overline{z}}} \; \mathrm{is \; hyperbolic}\}, \\
 U_- & =\{\mathbf{z} \in \mathbb{C}^{n,1}: f(\mathbf{z})<0\}=\{ \mathbf{z} \in \mathbb{C}^{n,1}: \overleftrightarrow{\mathbf{z},\mathbf{\overline{z}}} \; \mathrm{is \; elliptic}\}.
\end{align*}
 
\begin{proposition}
The sets $[U_0],[U_+]$ and $[U_-]$ give a partition of $\mathbb{P}_\mathbb{C}^{n}$.
\end{proposition}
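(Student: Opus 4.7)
The plan is straightforward: by the trichotomy of the real numbers, the sets $U_0$, $U_+$, $U_-$ already form a partition of $\mathbb{C}^{n,1}\setminus\{\mathbf{0}\}$ into the three level strata of $f$. To conclude that $[U_0]$, $[U_+]$, $[U_-]$ partition $\mathbb{P}_{\mathbb{C}}^n$, it suffices to verify that each $U_i$ is saturated under the equivalence relation $\mathbf{z}\sim \alpha\mathbf{z}$ with $\alpha\in\mathbb{C}^*$; that is, the sign of $f$ is invariant under nonzero scalar multiplication. Given this, disjointness of $[U_0],[U_+],[U_-]$ and the fact that their union is all of $\mathbb{P}_{\mathbb{C}}^n$ are immediate.

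The main computation is the homogeneity of $f$ under rescaling. Writing $f(\mathbf{z})=-\det G$, where $G$ is the Gram matrix of the pair $\{\mathbf{z},\overline{\mathbf{z}}\}$ already used to define $f$ in the introduction, I would apply the two identities
\begin{equation}\notag
\langle \alpha\mathbf{z},\alpha\mathbf{z}\rangle = |\alpha|^2\langle \mathbf{z},\mathbf{z}\rangle,\qquad \langle \alpha\mathbf{z},\overline{\alpha\mathbf{z}}\rangle = \alpha^{2}\langle \mathbf{z},\overline{\mathbf{z}}\rangle,
\end{equation}
to obtain $\det G(\alpha\mathbf{z})=|\alpha|^{4}\det G(\mathbf{z})$ and hence $f(\alpha\mathbf{z})=|\alpha|^{4}f(\mathbf{z})$. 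Alternatively, working directly with the real-coordinate expression for $f$, one checks that under the substitution $\mathbf{z}\mapsto\alpha\mathbf{z}$ each bilinear factor transforms as $x_j'y_k'-x_k'y_j'=|\alpha|^2(x_jy_k-x_ky_j)$, so every squared term is multiplied by $|\alpha|^4$ and the sign is preserved.

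Since $|\alpha|^4>0$ for all $\alpha\in\mathbb{C}^*$, the three sets $U_0$, $U_+$, $U_-$ are each saturated by the projective equivalence relation. Therefore $[U_0]$, $[U_+]$, $[U_-]$ are pairwise disjoint, and together they cover $\mathbb{P}_{\mathbb{C}}^n$. There is no substantive obstacle in this proof; the one point worth emphasizing is that the homogeneity factor is the positive real number $|\alpha|^4$, not a general complex scalar, which is precisely what allows a sign-based partition of $\mathbb{C}^{n,1}\setminus\{\mathbf{0}\}$ to descend to a partition of the projective quotient.
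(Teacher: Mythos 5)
Your proof is correct and follows essentially the same route as the paper's: both establish the homogeneity $f(\alpha\mathbf{z})=\lvert\alpha\rvert^{4}f(\mathbf{z})$ (the paper computes it directly from $f(\mathbf{z})=\lvert\langle\mathbf{z},\overline{\mathbf{z}}\rangle\rvert^{2}-\langle\mathbf{z},\mathbf{z}\rangle^{2}$, which is the same Gram-determinant identity you use) and then conclude that the sign of $f$ descends to the projective quotient. No gaps.
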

\begin{proof}
We remember that  $f(\mathbf{z})$ is equal to $\lvert \langle\mathbf{z},\mathbf{\overline{z}}\rangle \rvert ^2 - \langle\mathbf{z},\mathbf{z}\rangle^2$. So, if $\alpha$ lies in $\mathbb{C}^*$ then  
\begin{align*}
f(\alpha \mathbf{z}) &= \lvert \langle\alpha \mathbf{z},\overline{\alpha} \mathbf{\overline{z}}\rangle \rvert ^2 - \langle \alpha \mathbf{z},\alpha \mathbf{z}\rangle^2\\
& = \lvert \alpha \rvert ^4 (\lvert \langle\mathbf{z},\mathbf{\overline{z}}\rangle \rvert ^2 - \langle\mathbf{z},\mathbf{z}\rangle^2) \\
& = \lvert \alpha \rvert ^4 f(\mathbf{z}).
\end{align*}
Thus, $[U_0],[U_+]$ and $[U_-]$ are non-empty pairwise disjoint sets whose union is all $\mathbb{P}_\mathbb{C}^{n}$. 
\end{proof}

The next proposition is an analogous but different result to Theorem 2.7 in \cite{cano2016action}, in such proposition we can appreciate the ``growth'' of $\Lambda$.
\begin{proposition} \label{te:de}
The projective sets $[U_0],[U_+]$ and $[U_-]$ are $\mathrm{SO_+}(n,1)$-invariants and have the following properties:  

\begin{itemize}
	\item[i)] The union $([U_0] \setminus \mathbb{H}_{\mathbb{R}}^n) \cup [U_-]$ is equal to $\Lambda$;
	\item [ii)] The union $[U_+] \cup  \mathbb{H}_{\mathbb{R}}^n$ is equal to $\Omega=\mathbb{P}_{\mathbb{C}}^{n} \setminus \Lambda$;
	\item[iii)] The set $\Omega=\mathbb{P}_{\mathbb{C}}^{n} \setminus \Lambda$ is a complete Kobayashi hyperbolic space.  
\end{itemize}
\end{proposition}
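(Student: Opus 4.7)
The argument naturally splits: parts (i) and (ii) are direct bookkeeping based on Lemmas~\ref{le:5}, \ref{le:6}, and \ref{le:1}, while (iii) calls for an external input, namely the Kiernan--Kobayashi theorem on hyperbolically embedded subspaces of projective space. First, the $\mathrm{SO}_+(n,1)$-invariance of $[U_0], [U_+], [U_-]$ is immediate from Lemma~\ref{le:7} together with the observation that $\mathrm{SO}_+(n,1)$ consists of real matrices, hence commutes with complex conjugation and preserves the linear dependence status of $\mathbf{z}$ and $\overline{\mathbf{z}}$.

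For (i), I would split on whether $\mathbf{z}$ and $\overline{\mathbf{z}}$ are linearly independent. In the independent case, Lemma~\ref{le:5} identifies $z\in\Lambda$ with $f(\mathbf{z})\leq 0$, placing $z$ in $[U_0]\cup[U_-]$; moreover Lemma~\ref{le:1} forces $z\notin\mathbb{P}_{\mathbb{R}}^n\supset\mathbb{H}_{\mathbb{R}}^n$, so in fact $z\in ([U_0]\setminus\mathbb{H}_{\mathbb{R}}^n)\cup[U_-]$. In the dependent case, Lemma~\ref{le:1} puts $z\in\mathbb{P}_{\mathbb{R}}^n$ and gives $f(\mathbf{z})=0$ automatically, while Lemma~\ref{le:6} identifies $z\in\Lambda$ with $\langle\mathbf{z},\mathbf{z}\rangle\geq 0$, i.e., with $z\in[U_0]\setminus\mathbb{H}_{\mathbb{R}}^n$. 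Running this case analysis in both directions yields (i). Part (ii) then follows immediately: a real representative $\mathbf{z}$ of a point in $\mathbb{H}_{\mathbb{R}}^n$ satisfies $\overline{\mathbf{z}}=\mathbf{z}$, giving $\mathbb{H}_{\mathbb{R}}^n\subset[U_0]$, and subtracting $\Lambda=([U_0]\setminus\mathbb{H}_{\mathbb{R}}^n)\cup[U_-]$ from the partition $\mathbb{P}_{\mathbb{C}}^n=[U_0]\sqcup[U_+]\sqcup[U_-]$ leaves exactly $\mathbb{H}_{\mathbb{R}}^n\cup[U_+]$.

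For (iii), the plan is to invoke the Kiernan--Kobayashi theorem: the complement in $\mathbb{P}_{\mathbb{C}}^n$ of $2n+1$ hyperplanes in general position is hyperbolically embedded in $\mathbb{P}_{\mathbb{C}}^n$, and hence complete Kobayashi hyperbolic because $\mathbb{P}_{\mathbb{C}}^n$ is compact. I would then select $p_1,\dots,p_{2n+1}\in\partial\mathbb{H}_{\mathbb{R}}^n$ so that $H_{p_1},\dots,H_{p_{2n+1}}$ are in general position; since $\Omega\subset\mathbb{P}_{\mathbb{C}}^n\setminus\bigcup_i H_{p_i}$ and the Kobayashi pseudo-distance is monotone under inclusion, both the hyperbolic-embedding estimate relative to the Fubini--Study metric and the resulting completeness transfer to $\Omega$.

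The main obstacle is verifying the general position claim. The real analytic map $p\mapsto H_p$ sends $\partial\mathbb{H}_{\mathbb{R}}^n$ onto a real $(n-1)$-sphere in the dual projective space $(\mathbb{P}_{\mathbb{C}}^n)^*$. In the standard affine chart $\partial\mathbb{H}_{\mathbb{R}}^n$ is the Euclidean unit sphere, hence not contained in any proper real projective hyperplane of $\mathbb{P}_{\mathbb{R}}^n$; consequently its image in $(\mathbb{P}_{\mathbb{C}}^n)^*$ is not contained in any proper complex projective subspace. Since the locus of $(2n+1)$-tuples of hyperplanes in general position is Zariski-open and non-empty in $((\mathbb{P}_{\mathbb{C}}^n)^*)^{2n+1}$, and the product image of the sphere is not contained in its complementary Zariski-closed set, generic choices of points on $\partial\mathbb{H}_{\mathbb{R}}^n$ will produce the required configuration.
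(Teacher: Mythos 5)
Your proposal is correct and follows the same overall architecture as the paper: invariance from Lemma~\ref{le:7}, parts (i)--(ii) from the characterization of $\Lambda$ in Lemmas~\ref{le:5} and~\ref{le:6} (your explicit case split on the linear dependence of $\mathbf{z}$ and $\overline{\mathbf{z}}$ is just a more detailed version of the paper's appeal to the proof of Theorem~\ref{teo:dos}), and part (iii) from Corollary 3.10.9 of Kobayashi on complements of $2n+1$ hyperplanes in general position. The only genuine divergence is how the general-position configuration inside $\Lambda$ is produced. The paper takes a point of $\mathbb{P}_{\mathbb{R}}^n\setminus\overline{\mathbb{H}_{\mathbb{R}}^n}$ and uses the infinitely many tangent hyperplanes through it; as literally stated this cannot give general position, since more than $n$ hyperplanes sharing a common point always have nonempty $(n+1)$-fold intersections, so the paper's wording needs repair. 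Your route through the dual sphere is sounder, but your last inference is slightly loose: knowing that the general-position locus is nonempty Zariski-open in $((\mathbb{P}_{\mathbb{C}}^n)^*)^{2n+1}$ does not by itself show it meets the product of copies of the dual sphere, because that product is contained in a proper Zariski-closed subset (a product of quadrics). The clean fix is a greedy selection: since the sphere spans and is an irreducible real variety, it is not covered by the finitely many hyperplanes spanned by $n$-element subsets of the points already chosen, so one can always pick the next point; iterating $2n+1$ times gives the required configuration. With that small patch your argument for (iii) is complete, and arguably tighter than the paper's.
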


\begin{proof}
By Lemma \ref{le:7} we know that the sets $[U_0],[U_+]$ and $[U_-]$ are $\mathrm{SO_+}(n,1)$-invariants. The parts i) and ii) follow from the description of $\Lambda$ given in the proof of Theorem 2 because when $\mathbf{z}$ lies in $U_0 \setminus \mathbb{R}^{n,1}$ or in $U_-$ then $\mathbf{z}$ is positive. \\

iii) By Corollary 3.10.9 in \cite{kobayashi2013hyperbolic} we know that an open subset of $\mathbb{P}_{\mathbb{C}}^n$ that omits at least $2n+1$ projective hyperplanes in general position is a complete Kobayashi hyperbolic space. Given that for any point in $\mathbb{P}_{\mathbb{R}}^n \setminus \overline{\mathbb{H}_{\mathbb{R}}^n}$ there are infinitely many projective hyperplanes in general position passing through the point and contained in $\Lambda$, then $\Lambda$ contains infinitely many projective hyperplanes in general position. Thus, $\Omega$ is a complete Kobayashi hyperbolic space.   
\end{proof}

\subsection{The fiber bundle $\Omega \rightarrow \mathbb{H}_{\mathbb{R}}^n$}
\label{subsec6}

Following Cano et al. \cite{cano2016action} we use the orthogonal projection $\Pi_{\mathbb{R}}:  \mathbb{H}_{\mathbb{C}}^n \rightarrow \mathbb{H}_{\mathbb{R}}^n$ that sends $z$ to its closest point in $\mathbb{H}_{\mathbb{R}}^n$. Goldman in \cite{goldman1999complex} proves that $\Pi_{\mathbb{R}}(z)$ is the midpoint $m(z)$ of the geodesic segment joining $z$ and $\overline{z}$. Explicitly, $m(z)=[\mathbf{m}(\mathbf{z})]$, where 
\begin{equation} \notag 
\mathbf{m}(\mathbf{z})=\mathbf{z} \overline{\eta} (\mathbf{z})+ \mathbf{\overline{z}} \eta (\mathbf{z})
\end{equation} 
and $\eta(\mathbf{z})$ is a number such that 
\begin{equation} \notag
\eta^2(\mathbf{z})=-\langle\mathbf{z},\mathbf{\overline{z}}\rangle=z_{n+1}^2-z_1^2-\cdots -z_n^2.
\end{equation}

For the remainder of this subsection we will assume that $n>2$. We know, by definition of $U_+$, that the space of non-real negative vectors is contained in $U_+$, so we define the function $\widetilde{\Pi}: U_+ \rightarrow \mathbb{R}^{n,1}$ given by 

\begin{equation} \label{eq:pi}
\widetilde{\Pi}(\mathbf{z})=\mathbf{z} \overline{\eta} (\mathbf{z})+ \mathbf{\overline{z}} \eta (\mathbf{z}).
\end{equation}

\begin{lemma} \label{le:8}
The function $\widetilde{\Pi}$ is $\mathrm{SO_+}(n,1)$-equivariant.
\end{lemma}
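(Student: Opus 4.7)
The plan is to exploit two defining properties of $A\in\mathrm{SO}_+(n,1)$: first, $A$ has real matrix entries, hence commutes with complex conjugation, i.e. $A\overline{\mathbf{z}}=\overline{A\mathbf{z}}$ for every $\mathbf{z}\in\mathbb{C}^{n,1}$; second, $A$ preserves the Hermitian form $\langle\cdot,\cdot\rangle$. Combining these two facts gives
\begin{equation*}
\eta^{2}(A\mathbf{z})=-\langle A\mathbf{z},\overline{A\mathbf{z}}\rangle=-\langle A\mathbf{z},A\overline{\mathbf{z}}\rangle=-\langle \mathbf{z},\overline{\mathbf{z}}\rangle=\eta^{2}(\mathbf{z}),
\end{equation*}
so that $\eta(A\mathbf{z})$ and $\eta(\mathbf{z})$ differ at most by a sign. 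Once a compatible choice of square root is made, a direct substitution into \eqref{eq:pi} yields
\begin{equation*}
\widetilde{\Pi}(A\mathbf{z})=(A\mathbf{z})\,\overline{\eta}(A\mathbf{z})+\overline{A\mathbf{z}}\,\eta(A\mathbf{z})=A\bigl(\mathbf{z}\,\overline{\eta}(\mathbf{z})+\overline{\mathbf{z}}\,\eta(\mathbf{z})\bigr)=A\,\widetilde{\Pi}(\mathbf{z}),
\end{equation*}
which is exactly the equivariance claim.

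The first step I would carry out is to verify that $\widetilde{\Pi}$ is well defined on $U_+$: by Lemma \ref{le:phe}, if $\mathbf{z}\in U_+$ then $\overleftrightarrow{\mathbf{z},\overline{\mathbf{z}}}$ is hyperbolic, which guarantees that $\langle \mathbf{z},\overline{\mathbf{z}}\rangle$ and $\langle\mathbf{z},\mathbf{z}\rangle$ are not simultaneously zero in a way that kills the formula, and in particular $\eta(\mathbf{z})^{2}=-\langle\mathbf{z},\overline{\mathbf{z}}\rangle$ can be interpreted consistently. Second, I would explicitly record the two compatibility properties of $A$ above. Third, I would carry out the $\eta^{2}$ calculation just shown and then plug into the definition of $\widetilde{\Pi}$, factoring the real-linear map $A$ out of the expression.

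The main obstacle is the sign ambiguity in $\eta$: the identity $\eta^{2}(A\mathbf{z})=\eta^{2}(\mathbf{z})$ determines $\eta(A\mathbf{z})$ only up to $\pm1$, whereas the definition of $\widetilde{\Pi}$ in \eqref{eq:pi} is sign-sensitive (flipping the sign of $\eta$ flips the sign of $\widetilde{\Pi}(\mathbf{z})$, even though the projective class $[\widetilde{\Pi}(\mathbf{z})]$ is unchanged). To handle this, I would fix a continuous branch of $\eta$ on $U_+$ (using that the complex-valued map $\mathbf{z}\mapsto -\langle\mathbf{z},\overline{\mathbf{z}}\rangle$ avoids a suitable branch cut on each connected piece of $U_+$) and then observe that the identity $\eta^{2}(A\mathbf{z})=\eta^{2}(\mathbf{z})$ together with continuity of $\mathbf{z}\mapsto A\mathbf{z}$ forces $\eta(A\mathbf{z})=\eta(\mathbf{z})$ on each component, because the sign $\epsilon_A(\mathbf{z})=\eta(A\mathbf{z})/\eta(\mathbf{z})\in\{\pm1\}$ is locally constant and equals $+1$ at $A=\mathrm{Id}$.

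Once the sign is pinned down, the rest is a one-line substitution: pull $A$ out of both summands using $\mathbb{R}$-linearity and the intertwining $A\overline{\mathbf{z}}=\overline{A\mathbf{z}}$, recognize the remaining expression as $\widetilde{\Pi}(\mathbf{z})$, and conclude $\widetilde{\Pi}(A\mathbf{z})=A\widetilde{\Pi}(\mathbf{z})$. This simultaneously confirms that the image lies in $\mathbb{R}^{n,1}$, since $\widetilde{\Pi}(\mathbf{z})=\mathbf{z}\overline{\eta}(\mathbf{z})+\overline{\mathbf{z}}\,\eta(\mathbf{z})$ is manifestly invariant under complex conjugation, a consistency check that will be useful for the subsequent verification that $\widetilde{\Pi}$ descends to the promised fibre bundle in Theorem \ref{teo:uno}.
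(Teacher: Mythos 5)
Your proof is correct and follows essentially the same route as the paper: use that $A$ is real (so $\overline{A\mathbf{z}}=A\overline{\mathbf{z}}$) and preserves the Hermitian form to get $\eta^{2}(A\mathbf{z})=\eta^{2}(\mathbf{z})$, then substitute into the definition of $\widetilde{\Pi}$. Your additional discussion of the sign ambiguity in $\eta$ addresses a point the paper passes over silently (it simply asserts $\eta(A\mathbf{z})=\eta(\mathbf{z})$), and your resolution via a continuous branch and connectedness of $\mathrm{SO}_+(n,1)$ is a legitimate way to pin it down.
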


\begin{proof}
Since $A$ lies in $\mathrm{SO_+}(n,1)$, then $\overline{A\mathbf{z}}=A\mathbf{\overline{z}}$. So, we have that

\begin{equation} \notag
\eta^{2}(A\mathbf{z})=-\langle A\mathbf{z},\overline{A\mathbf{z}}\rangle=-\langle A\mathbf{z},A\mathbf{\overline{z}}\rangle=-\langle \mathbf{z},\mathbf{\overline{z}}\rangle=\eta^{2}(\mathbf{z}).
\end{equation}
Hence $\eta(A\mathbf{z})=\eta(\mathbf{z})$ and
\begin{equation}\notag
\widetilde{\Pi}(A\mathbf{z})=A\mathbf{z}\overline{\eta}(A\mathbf{z})+\overline{A\mathbf{z}}\eta(A\mathbf{z})=A\mathbf{z}\overline{\eta}(\mathbf{z})+A\overline{\mathbf{z}}\eta(\mathbf{z})=A(\mathbf{z}\overline{\eta}(\mathbf{z})+\overline{\mathbf{z}}\eta(\mathbf{z}))=A\widetilde{\Pi}(\mathbf{z}).
\end{equation}
That is $\widetilde{\Pi}$ is $\mathrm{SO_+}(n,1)$-equivariant.
\end{proof}

\begin{lemma}\label{le:9}
Consider the function $\widetilde{\Pi}$ defined by Equation \eqref{eq:pi}. The equality $\widetilde{\Pi}(\alpha \mathbf{z})=\lvert \alpha \rvert ^2 \widetilde{\Pi}(\mathbf{z})$ holds for every $\alpha \in \mathbb{C}^{*}$. 
\end{lemma}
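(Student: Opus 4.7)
The plan is a direct computation tracing how $\eta$ scales under $\mathbf{z} \mapsto \alpha \mathbf{z}$ for $\alpha \in \mathbb{C}^*$. First I would compute $\eta^2(\alpha\mathbf{z})$ using the sesquilinearity of the Hermitian form together with $\overline{\alpha\mathbf{z}} = \overline{\alpha}\,\overline{\mathbf{z}}$, which gives
\[
\eta^2(\alpha\mathbf{z}) = -\langle \alpha\mathbf{z}, \overline{\alpha}\,\overline{\mathbf{z}}\rangle = -\alpha\,\overline{\overline{\alpha}}\,\langle\mathbf{z},\overline{\mathbf{z}}\rangle = \alpha^2\,\eta^2(\mathbf{z}).
\]

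This identity pins down $\eta(\alpha\mathbf{z})$ only up to a sign, so the next step is to fix the branch $\eta(\alpha\mathbf{z}) = \alpha\,\eta(\mathbf{z})$. Substituting into Equation \eqref{eq:pi} then gives
\[
\widetilde{\Pi}(\alpha\mathbf{z}) = \alpha\mathbf{z}\,\overline{\alpha\,\eta(\mathbf{z})} + \overline{\alpha}\,\overline{\mathbf{z}}\,\alpha\,\eta(\mathbf{z}) = |\alpha|^2\bigl(\mathbf{z}\,\overline{\eta}(\mathbf{z}) + \overline{\mathbf{z}}\,\eta(\mathbf{z})\bigr) = |\alpha|^2\,\widetilde{\Pi}(\mathbf{z}),
\]
which is exactly the claimed identity.

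I do not anticipate any real obstacle; the only delicate point is the choice of branch for the square root $\eta$. The opposite branch flips the sign of $\widetilde{\Pi}(\alpha\mathbf{z})$, but this only changes the chosen lift by an overall scalar and hence does not affect the induced map on projective classes. Combined with the $\mathrm{SO}_+(n,1)$-equivariance established in Lemma \ref{le:8}, the homogeneity of degree $2$ proved here is precisely what is needed to guarantee that $\widetilde{\Pi}$ descends to a well-defined, $\mathrm{SO}_+(n,1)$-equivariant map $\Pi : \Omega \to \mathbb{H}_{\mathbb{R}}^n$ on projective classes, which is the key structural ingredient toward Theorem \ref{teo:uno}.
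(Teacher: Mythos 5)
Your proof is correct and follows essentially the same computation as the paper: scale $\eta^2$, fix the branch $\eta(\alpha\mathbf{z})=\alpha\,\eta(\mathbf{z})$, and substitute into the definition of $\widetilde{\Pi}$. In fact your intermediate identity $\eta^{2}(\alpha\mathbf{z})=\alpha^{2}\eta^{2}(\mathbf{z})$ is the correct one --- the paper's displayed step writes $\lvert\alpha\rvert^{2}\eta^{2}(\mathbf{z})$ there, which is a slip, although its final substitution uses the same branch $\eta(\alpha\mathbf{z})=\alpha\,\eta(\mathbf{z})$ that you do and reaches the same conclusion.
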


\begin{proof}
Take $\alpha \in \mathbb{C}^{*}$, since
\begin{equation} \notag
\eta^{2}(\alpha \mathbf{z})=-\langle\alpha \mathbf{z},\overline{\alpha \mathbf{z}}\rangle=-\lvert\alpha\rvert^2 \langle\mathbf{z},\mathbf{\overline{z}}\rangle=\lvert\alpha\rvert^2 \eta^{2}(\mathbf{z}),
\end{equation}
we have that, 
\begin{equation}\notag
\widetilde{\Pi}(\alpha \mathbf{z})=\alpha \mathbf{z} \overline{\eta}(\alpha \mathbf{z})+\overline{\alpha \mathbf{z}} \eta(\alpha \mathbf{z})= \alpha \mathbf{z} \overline{\alpha}\,\overline{\eta}(\mathbf{z})+\overline{\alpha}\, \mathbf{\overline{z}} \alpha \eta(\mathbf{z})=\lvert \alpha \rvert ^2 \widetilde{\Pi}(\mathbf{z}). 
\end{equation}
\end{proof}

By the Lemma \ref{le:9} the map $\widetilde{\Pi}$ induces a well-defined projection map: 
\begin{align*}
\Pi: [U_+] & \rightarrow \mathbb{P}_{\mathbb{R}}^n \\
 z & \mapsto \Pi(z)=[\widetilde{\Pi}(\mathbf{z})].
\end{align*}

If we take a real negative vector $\mathbf{z}$ then, since $ \mathbf{\overline{z}}=\mathbf{z}$
\begin{equation} \notag
\eta^{2}=-\langle\mathbf{z},\mathbf{\overline{z}}\rangle=-\langle\mathbf{z},\mathbf{z}\rangle=\lvert \langle\mathbf{z},\mathbf{z}\rangle \rvert,
\end{equation}

then $\overline{\eta}(\mathbf{z})=\eta(\mathbf{z})$, therefore

\begin{equation}\notag
\mathbf{z} \overline{\eta}(\mathbf{z})+ \overline{\mathbf{z}} \eta(\mathbf{z})=2\eta(\mathbf{z}) \mathbf{z}. 
\end{equation}

Thus, we can extend $\Pi$ continuously to all $\Omega =[U_+] \cup \mathbb{H}_{\mathbb{R}}^n$ by defining 

\begin{equation}\notag
\Pi(z)=z, \;\; \mathrm{if}\;\; z\in \mathbb{H}_{\mathbb{R}}^n. 
\end{equation}

The next result follows straightforwardly from Lemma \ref{le:8}. 
\begin{proposition}
The projection $\Pi: \Omega=[U_+] \cup \mathbb{H}_{\mathbb{R}}^n \rightarrow \mathbb{P}_{\mathbb{R}}^n$ is $\mathrm{SO_+}$-equivariant. 
\end{proposition}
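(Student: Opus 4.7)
The plan is to verify equivariance piecewise, according to the two cases used in the definition of $\Pi$, and then note that these cases are compatible because $\Pi$ was defined by continuous extension.

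First I would handle the generic case $z \in [U_+]$. Pick any lift $\mathbf{z} \in U_+$ so that $z=[\mathbf{z}]$. For $A \in \mathrm{SO}_+(n,1)$, Lemma \ref{le:7} gives $f(A\mathbf{z}) = f(\mathbf{z}) > 0$, so $A\mathbf{z} \in U_+$ and $A\cdot z \in [U_+]$, meaning the formula $\Pi(A\cdot z) = [\widetilde{\Pi}(A\mathbf{z})]$ applies. Lemma \ref{le:8} then gives $\widetilde{\Pi}(A\mathbf{z}) = A\widetilde{\Pi}(\mathbf{z})$, and projectivizing yields
\begin{equation}\notag
\Pi(A\cdot z) = [A\widetilde{\Pi}(\mathbf{z})] = A\cdot[\widetilde{\Pi}(\mathbf{z})] = A\cdot \Pi(z).
\end{equation}

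Next I would handle the case $z \in \mathbb{H}_{\mathbb{R}}^n$. Here $\Pi(z)=z$ by construction. Since any $A \in \mathrm{SO}_+(n,1)$ has real entries, it preserves $\mathbb{R}^{n,1}$, and because it lies in the connected component of the identity of the full orthogonal group, it preserves both the sign of the Hermitian form and the positive time-like cone. Consequently $A\cdot \mathbb{H}_{\mathbb{R}}^n = \mathbb{H}_{\mathbb{R}}^n$, so $A\cdot z \in \mathbb{H}_{\mathbb{R}}^n$ and
\begin{equation}\notag
\Pi(A\cdot z) = A\cdot z = A\cdot \Pi(z).
\end{equation}

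Since $\Omega = [U_+] \cup \mathbb{H}_{\mathbb{R}}^n$ and equivariance has been verified on each piece, the proposition follows. There is no real obstacle: the only point worth double-checking is that the definition of $\Pi$ on $\mathbb{H}_{\mathbb{R}}^n$ as the identity is genuinely the continuous extension of the formula $[\widetilde{\Pi}(\mathbf{z})]$, which was already observed just before the proposition (for a real negative vector $\mathbf{z}$, one has $\widetilde{\Pi}(\mathbf{z}) = 2\eta(\mathbf{z})\mathbf{z}$, whose projectivization is $[\mathbf{z}]$), so there is no incompatibility between the two cases of the argument.
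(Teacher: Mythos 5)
Your proof is correct and takes essentially the same route as the paper, which simply observes that the proposition ``follows straightforwardly from Lemma \ref{le:8}''; you have merely spelled out the details (invariance of $[U_+]$ via Lemma \ref{le:7}, equivariance on $[U_+]$ via Lemma \ref{le:8}, and the trivial check on $\mathbb{H}_{\mathbb{R}}^n$). The extra care about compatibility of the two cases is harmless but not needed beyond what the paper already records before the proposition.
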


\begin{lemma}
The image of the function $\widetilde{\Pi}$ is contained in the space of real negative vectors. Hence the image of the function $\Pi$ is contained in $\mathbb{H}_{\mathbb{R}}^n$.
\end{lemma}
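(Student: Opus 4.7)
The plan is to prove the two claims separately: first that $\widetilde{\Pi}(\mathbf{z})$ is real (invariant under complex conjugation), and then that it is a negative vector.

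For the first claim, I would compute
\begin{equation}\notag
\overline{\widetilde{\Pi}(\mathbf{z})}=\overline{\mathbf{z}\,\overline{\eta}(\mathbf{z})+\overline{\mathbf{z}}\,\eta(\mathbf{z})}=\overline{\mathbf{z}}\,\eta(\mathbf{z})+\mathbf{z}\,\overline{\eta}(\mathbf{z})=\widetilde{\Pi}(\mathbf{z}),
\end{equation}
so $\widetilde{\Pi}(\mathbf{z})\in\mathbb{R}^{n,1}$. Note this is independent of the sign ambiguity in the choice of $\eta(\mathbf{z})$.

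For the second claim, I would expand
\begin{equation}\notag
\langle\widetilde{\Pi}(\mathbf{z}),\widetilde{\Pi}(\mathbf{z})\rangle=|\eta(\mathbf{z})|^2\langle\mathbf{z},\mathbf{z}\rangle+\overline{\eta}(\mathbf{z})^2\langle\mathbf{z},\overline{\mathbf{z}}\rangle+\eta(\mathbf{z})^2\langle\overline{\mathbf{z}},\mathbf{z}\rangle+|\eta(\mathbf{z})|^2\langle\overline{\mathbf{z}},\overline{\mathbf{z}}\rangle
\end{equation}
using sesquilinearity. Since $\langle\overline{\mathbf{z}},\overline{\mathbf{z}}\rangle=\overline{\langle\mathbf{z},\mathbf{z}\rangle}=\langle\mathbf{z},\mathbf{z}\rangle$ (a real number), and using $\eta^2(\mathbf{z})=-\langle\mathbf{z},\overline{\mathbf{z}}\rangle$ together with $\overline{\eta}^2(\mathbf{z})=-\langle\overline{\mathbf{z}},\mathbf{z}\rangle$, the cross terms simplify to $-|\langle\mathbf{z},\overline{\mathbf{z}}\rangle|^2$ each, while $|\eta(\mathbf{z})|^2=|\langle\mathbf{z},\overline{\mathbf{z}}\rangle|$. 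Thus
\begin{equation}\notag
\langle\widetilde{\Pi}(\mathbf{z}),\widetilde{\Pi}(\mathbf{z})\rangle=2|\langle\mathbf{z},\overline{\mathbf{z}}\rangle|\,\langle\mathbf{z},\mathbf{z}\rangle-2|\langle\mathbf{z},\overline{\mathbf{z}}\rangle|^2=2|\langle\mathbf{z},\overline{\mathbf{z}}\rangle|\bigl(\langle\mathbf{z},\mathbf{z}\rangle-|\langle\mathbf{z},\overline{\mathbf{z}}\rangle|\bigr).
\end{equation}

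The main point is to deduce negativity from the hypothesis $\mathbf{z}\in U_+$. Since $\mathbf{z}$ and $\overline{\mathbf{z}}$ are linearly independent and $\overleftrightarrow{\mathbf{z},\overline{\mathbf{z}}}$ is hyperbolic, Lemma \ref{le:sy} gives $\det(G)=\langle\mathbf{z},\mathbf{z}\rangle^{2}-|\langle\mathbf{z},\overline{\mathbf{z}}\rangle|^{2}<0$, equivalently $|\langle\mathbf{z},\overline{\mathbf{z}}\rangle|>|\langle\mathbf{z},\mathbf{z}\rangle|\geq \langle\mathbf{z},\mathbf{z}\rangle$. In particular $|\langle\mathbf{z},\overline{\mathbf{z}}\rangle|>0$, so both factors make sense and $\langle\mathbf{z},\mathbf{z}\rangle-|\langle\mathbf{z},\overline{\mathbf{z}}\rangle|<0$. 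Therefore $\langle\widetilde{\Pi}(\mathbf{z}),\widetilde{\Pi}(\mathbf{z})\rangle<0$, showing $\widetilde{\Pi}(\mathbf{z})$ is a real negative vector. Passing to projective classes, $\Pi(z)\in[V_-]\cap\mathbb{P}_{\mathbb{R}}^{n}=\mathbb{H}_{\mathbb{R}}^{n}$, which gives the lemma.

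The only potentially tricky step is the bookkeeping in the expansion of $\langle\widetilde{\Pi}(\mathbf{z}),\widetilde{\Pi}(\mathbf{z})\rangle$, specifically keeping track of which conjugates attach to $\eta$ and which to the vectors, so that the expression collapses to the symmetric form above; everything else is a direct application of Sylvester's law via Lemma \ref{le:sy}.
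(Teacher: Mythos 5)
Your proposal is correct and follows essentially the same route as the paper: both expand $\langle\widetilde{\Pi}(\mathbf{z}),\widetilde{\Pi}(\mathbf{z})\rangle$ to the form $2\lvert\eta(\mathbf{z})\rvert^{2}\bigl(\langle\mathbf{z},\mathbf{z}\rangle-\lvert\eta(\mathbf{z})\rvert^{2}\bigr)$ and deduce negativity from $\mathbf{z}\in U_{+}$ (your appeal to Lemma \ref{le:sy} and $\det(G)<0$ is just the definition $f(\mathbf{z})=-\det(G)>0$ used directly in the paper). The only cosmetic difference is that you also verify explicitly that $\widetilde{\Pi}(\mathbf{z})$ is conjugation-invariant, which the paper takes as given from its construction.
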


\begin{proof}
We only need to prove that $\widetilde{\Pi}(\mathbf{z})$ is negative for all $\mathbf{z} \in U_+$ because the image of $\widetilde{\Pi}$ is a subset of $\mathbb{R}^{n,1}$ and $\Pi(z)=z$ for all $z \in \mathbb{H}_{\mathbb{R}}^n$.  If $\mathbf{z}$ lies in $U_+$ then 
\begin{align*} 
0<f(\mathbf{z}) &=\langle\mathbf{z},\mathbf{\overline{z}}\rangle \langle\mathbf{\overline{z}},\mathbf{z}\rangle-\langle\mathbf{z},\mathbf{z}\rangle \langle\mathbf{\overline{z}},\mathbf{\overline{z}}\rangle \\
& =-\eta^2(\mathbf{z})\,\overline{\eta}^2(\mathbf{z})-\langle\mathbf{z},\mathbf{z}\rangle^2 \\
& =\lvert \eta(\mathbf{z}) \rvert ^4- \langle\mathbf{z},\mathbf{z}\rangle^2. 
\end{align*}
Therefore  $\lvert \eta(\mathbf{z}) \rvert ^2 > \langle\mathbf{z},\mathbf{z}\rangle$. Using this fact it is not hard to check that 
\begin{align*} 
\langle \widetilde{\Pi}(\mathbf{z}),\widetilde{\Pi}(\mathbf{z})\rangle  
=2\lvert \eta(\mathbf{z}) \rvert ^2(\langle\mathbf{z},\mathbf{z}\rangle-\lvert \eta(\mathbf{z}) \rvert ^2) <0.
\end{align*}
Thus, $\widetilde{\Pi}(\mathbf{z})$ is a negative vector. It follows that $[\widetilde{\Pi}(\mathbf{z})]$ lies in $\mathbb{H}_{\mathbb{R}}^n$ for all $\mathbf{z}\in U_+$ and $\Pi(\Omega)$ is a subset of  $\mathbb{H}_{\mathbb{R}}^n$.
\end{proof}

The following Lemma is the analogous to Corollary 3.4 in \cite{cano2016action} and we have put it here for completeness. 

\begin{lemma}
If $\mathbf{z}$ is a positive vector such that $\mathbf{z}$ and $\mathbf{\overline{z}}$ are linearly independent and $\overleftrightarrow{\mathbf{z},\mathbf{\overline{z}}}$ is parabolic, then $\mathbf{z}\overline{\eta}(\mathbf{z})+\mathbf{\overline{z}}\eta(\mathbf{z}) \in \mathbf{H_z} \cap \mathbf{H}_{\overline{\mathbf{z}}}$.
\end{lemma}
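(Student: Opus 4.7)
The plan is to verify directly that the vector $\mathbf{m}(\mathbf{z}) := \mathbf{z}\overline{\eta}(\mathbf{z}) + \overline{\mathbf{z}}\eta(\mathbf{z})$ is Hermitian-orthogonal to both $\mathbf{z}$ and $\overline{\mathbf{z}}$, which is precisely the condition $\mathbf{m}(\mathbf{z}) \in \mathbf{H_z} \cap \mathbf{H}_{\overline{\mathbf{z}}}$. The parabolic hypothesis on $\overleftrightarrow{\mathbf{z},\mathbf{\overline{z}}}$ together with positivity of $\mathbf{z}$ will provide exactly the numerical identity needed to make the two inner products vanish.

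First I would record two elementary identities. The form of $\langle\cdot,\cdot\rangle$ shows immediately that $\langle\overline{\mathbf{z}},\overline{\mathbf{z}}\rangle = \langle\mathbf{z},\mathbf{z}\rangle$ (the diagonal entry is real and conjugation-invariant), and that $\langle\overline{\mathbf{z}},\mathbf{z}\rangle = \overline{\langle\mathbf{z},\overline{\mathbf{z}}\rangle} = -\overline{\eta}^{2}(\mathbf{z})$ by the defining relation $\eta^{2}(\mathbf{z}) = -\langle\mathbf{z},\overline{\mathbf{z}}\rangle$. Next, by Lemma \ref{le:sy}, the parabolic hypothesis on $\overleftrightarrow{\mathbf{z},\mathbf{\overline{z}}}$ is equivalent to $\det G = 0$, i.e.\ $\langle\mathbf{z},\mathbf{z}\rangle\langle\overline{\mathbf{z}},\overline{\mathbf{z}}\rangle = |\langle\mathbf{z},\overline{\mathbf{z}}\rangle|^{2}$. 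Using the first identity this simplifies to $\langle\mathbf{z},\mathbf{z}\rangle^{2} = |\eta(\mathbf{z})|^{4}$, and because $\mathbf{z}$ is positive we conclude $\langle\mathbf{z},\mathbf{z}\rangle = |\eta(\mathbf{z})|^{2} = \eta(\mathbf{z})\overline{\eta}(\mathbf{z})$.

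With this key scalar identity in hand, the two verifications are a direct line:
\begin{align*}
\langle \mathbf{m}(\mathbf{z}), \mathbf{z}\rangle &= \overline{\eta}(\mathbf{z})\langle\mathbf{z},\mathbf{z}\rangle + \eta(\mathbf{z})\langle\overline{\mathbf{z}},\mathbf{z}\rangle = \overline{\eta}(\mathbf{z})\langle\mathbf{z},\mathbf{z}\rangle - \eta(\mathbf{z})\overline{\eta}^{2}(\mathbf{z}) = \overline{\eta}(\mathbf{z})\bigl[\langle\mathbf{z},\mathbf{z}\rangle - |\eta(\mathbf{z})|^{2}\bigr],\\
\langle \mathbf{m}(\mathbf{z}), \overline{\mathbf{z}}\rangle &= \overline{\eta}(\mathbf{z})\langle\mathbf{z},\overline{\mathbf{z}}\rangle + \eta(\mathbf{z})\langle\overline{\mathbf{z}},\overline{\mathbf{z}}\rangle = -\overline{\eta}(\mathbf{z})\eta^{2}(\mathbf{z}) + \eta(\mathbf{z})\langle\mathbf{z},\mathbf{z}\rangle = \eta(\mathbf{z})\bigl[\langle\mathbf{z},\mathbf{z}\rangle - |\eta(\mathbf{z})|^{2}\bigr],
\end{align*}
and both expressions vanish by the identity derived from the parabolic condition.

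I do not anticipate any serious obstacle here; the only subtle point is the sign choice built into $\eta(\mathbf{z})$, but since the computation only uses $\eta(\mathbf{z})\overline{\eta}(\mathbf{z}) = |\eta(\mathbf{z})|^{2}$, $\eta^{2}(\mathbf{z}) = -\langle\mathbf{z},\overline{\mathbf{z}}\rangle$, and its conjugate, the argument is independent of which square root is selected. The real content of the lemma is therefore just the translation of \emph{``$\overleftrightarrow{\mathbf{z},\mathbf{\overline{z}}}$ is parabolic and $\mathbf{z}$ is positive''} into the single scalar equation $\langle\mathbf{z},\mathbf{z}\rangle = |\eta(\mathbf{z})|^{2}$, which is what makes the projection formula $\mathbf{m}(\mathbf{z})$ fall into the orthogonal complement of $\overleftrightarrow{\mathbf{z},\mathbf{\overline{z}}}$.
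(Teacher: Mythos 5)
Your proof is correct and follows essentially the same route as the paper: both reduce the parabolic hypothesis (via $\det G=0$, equivalently $f(\mathbf{z})=0$) together with positivity of $\mathbf{z}$ to the scalar identity $\langle\mathbf{z},\mathbf{z}\rangle=|\eta(\mathbf{z})|^{2}$, and then verify by direct computation that both Hermitian products with $\mathbf{z}$ and $\overline{\mathbf{z}}$ vanish. The only cosmetic difference is that you place $\mathbf{m}(\mathbf{z})$ in the first slot of the form while the paper places it in the second, which merely conjugates the expressions.
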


\begin{proof}
We have that $f(\mathbf{z})=0$, so 
\begin{equation}\notag
0=f(\mathbf{z})=\lvert \eta(\mathbf{z}) \rvert^4-\langle\mathbf{z},\mathbf{z}\rangle^2, 
\end{equation}
therefore $\lvert \eta(\mathbf{z}) \rvert^2=\langle\mathbf{z},\mathbf{z}\rangle>0$ because $\mathbf{z}$ is positive. 
Thus 
\begin{eqnarray*}
\langle \mathbf{z},\mathbf{z} \overline{\eta}(\mathbf{z})+\overline{\mathbf{z}}\eta(\mathbf{z}) \rangle & = & \eta(\mathbf{z})\langle \mathbf{z}, \mathbf{z} \rangle+ \overline{\eta}(\mathbf{z}) \langle \mathbf{z}, \overline{\mathbf{z}} \rangle \\
&=& \eta(\mathbf{z})\langle \mathbf{z}, \mathbf{z} \rangle +\overline{\eta}(\mathbf{z})(-\eta^2(\mathbf{z}))\\
&=& \eta(\mathbf{z})\left( \langle \mathbf{z}, \mathbf{z} \rangle- \lvert \eta(\mathbf{z}) \rvert^2 \right) \\
&=& 0 
\end{eqnarray*}
 Analogously, $\langle \overline{\mathbf{z}},\mathbf{z} \overline{\eta}(\mathbf{z})+\overline{\mathbf{z}}\eta(\mathbf{z}) \rangle=0$. Hence, $\mathbf{z}\overline{\eta}(\mathbf{z})+\mathbf{\overline{z}}\eta(\mathbf{z})$ lies in $\mathbf{H_z} \cap \mathbf{H}_{\overline{\mathbf{z}}}$.
\end{proof}

\subsection{The fiber of $\Pi$}
\label{subsec7}

In this subsection we still assume that $n>2$. Consider the function $\Pi$ defined in the Subsection \ref{subsec6}. We compute the preimage under $\Pi$ of the point $o=[0: \cdots :0:1] \in \mathbb{P}_{\mathbb{C}}^n$ and then we find the fiber over any other point using that $\mathrm{SO_+}(n,1)$ acts transitively on $\mathbb{H}_{\mathbb{R}}^n$ and the equivariance of $\Pi$.

\begin{lemma}
Let $z=[z_1: \cdots: z_{n+1}]$ be a point in $\Omega$ such that $\Pi(z)=o$, then the last homogeneous coordinate $z_{n+1}$ is non zero and the quotient $z_j/z_{n+1}$ is purely imaginary for $1 \leq j \leq n$. Moreover, the fiber $L_o=\Pi^{-1}(o)$ is the Lagrangian space 
\begin{equation}\notag
L_o=\{[iy_1:\cdots :iy_n:x_{n+1}] : x_{n+1},y_1, \dots y_n \in \mathbb{R}, x_{n+1} \neq 0\}.
\end{equation}
The boundary $C_o$ of $L_o$ consists of all the points in $\mathbb{P}_{\mathbb{C}}^n$ that can be represented by homogeneous coordinates of the same form but with $x_{n+1}=0$ 
\begin{equation}\notag
C_o=\{[iy_1:\cdots :iy_n:0] : (y_1, \dots ,y_n) \in \mathbb{R}^n \setminus \{(0, \dots,0)\} \}.
\end{equation} 
\end{lemma}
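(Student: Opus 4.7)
The plan is to prove the equality $L_o = \Pi^{-1}(o)$ by verifying both inclusions; the non-vanishing of $z_{n+1}$ and the purely-imaginary conclusion for $z_j/z_{n+1}$ then follow automatically, and $C_o$ is identified as the topological boundary of $L_o$ in $\mathbb{P}_{\mathbb{C}}^n$.

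For the inclusion $L_o\subseteq\Pi^{-1}(o)$, I take a representative $\mathbf{z}=(iy_{1},\dots,iy_{n},x_{n+1})$ of a point of $L_o$. Then $\eta^2(\mathbf{z})=x_{n+1}^2+\sum_{j=1}^n y_j^2>0$, so $\eta$ may be chosen real, and direct substitution in $\widetilde{\Pi}(\mathbf{z})=\mathbf{z}\overline{\eta}+\overline{\mathbf{z}}\eta$ cancels each $iy_j\eta$ against $-iy_j\eta$, leaving $\widetilde{\Pi}(\mathbf{z})=(0,\dots,0,2x_{n+1}\eta)$; thus $\Pi(z)=o$. I also need to check that every such $z$ actually lies in $\Omega$: if all $y_j$ vanish then $z=o\in\mathbb{H}_{\mathbb{R}}^n$, and if not a short evaluation of $f$ yields $f(\mathbf{z})=4x_{n+1}^2\sum y_j^2>0$, placing $z$ in $[U_+]$.

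The main work is the reverse inclusion. Let $z\in\Omega$ satisfy $\Pi(z)=o$ and fix a representative $\mathbf{z}$. Since $\widetilde{\Pi}(\mathbf{z})$ is real, the condition $\Pi(z)=o$ forces it to be a real scalar multiple of $(0,\dots,0,1)$; writing $m_j:=z_j\overline{\eta}+\overline{z_j}\eta$, this gives $m_j=0$ for $1\leq j\leq n$ and $m_{n+1}\neq 0$, from which $z_{n+1}\neq 0$ and $\eta\neq 0$. Let $\kappa:=\eta/\overline{\eta}$, a unit complex number. The equations $m_j=0$ rearrange to $z_j=-\kappa\,\overline{z_j}$ and hence $z_j^{2}=\kappa^{2}\,\overline{z_j}^{2}$ for $1\leq j\leq n$. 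Dividing the identity $\eta^2=z_{n+1}^2-\sum_{j=1}^n z_j^2$ by its complex conjugate and substituting these relations makes the $\sum z_j^2$ terms cancel, collapsing the equation to $\kappa^2=z_{n+1}^2/\overline{z_{n+1}}^2$; thus $\kappa=\pm z_{n+1}/\overline{z_{n+1}}$. The minus sign is ruled out by $m_{n+1}\neq 0$, since it would give $m_{n+1}=\overline{\eta}(z_{n+1}+\overline{z_{n+1}}\kappa)=0$. So $\kappa=z_{n+1}/\overline{z_{n+1}}$, and substituting back into $z_j=-\kappa\,\overline{z_j}$ yields $z_j\overline{z_{n+1}}+\overline{z_j}\,z_{n+1}=0$, i.e.\ $z_j/z_{n+1}$ is purely imaginary; writing $z_j/z_{n+1}=iy_j$ exhibits $z$ as a point of $L_o$.

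Finally, the description of $C_o$ is a standard closure argument: in the affine chart $\{z_{n+1}\neq 0\}$, $L_o$ is the real $n$-dimensional affine subspace $\{(iy_1,\dots,iy_n,1):y_j\in\mathbb{R}\}$, and normalizing any sequence of its elements by the largest modulus of its homogeneous coordinates shows that its topological boundary in $\mathbb{P}_{\mathbb{C}}^n$ consists precisely of the points $[iy_1:\cdots:iy_n:0]$ with $(y_1,\dots,y_n)\neq 0$. The main obstacle is the rigidity argument in the third paragraph: combining the linear constraints $m_j=0$ with the quadratic constraint $\eta^2=-\langle\mathbf{z},\overline{\mathbf{z}}\rangle$ to pin $\kappa$ down to exactly $z_{n+1}/\overline{z_{n+1}}$, after which the purely imaginary conclusion for $z_j/z_{n+1}$ falls out algebraically and the use of $m_{n+1}\neq 0$ to discard the wrong sign is the decisive step.
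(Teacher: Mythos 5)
Your argument is correct and follows essentially the same route as the paper: both exploit the $n$ linear relations $z_j\overline{\eta}+\overline{z_j}\eta=0$ together with the defining identity $\eta^2=z_{n+1}^2-\sum z_j^2$ to pin down the phase of $\eta^2$ (your $\kappa=\eta/\overline{\eta}$ is the paper's $e^{i\theta}$), and both use the nonvanishing of the last component of $\widetilde{\Pi}(\mathbf{z})$ to fix the sign and conclude that $z_{n+1}\neq 0$ and $z_j/z_{n+1}$ is purely imaginary. Your write-up is in fact slightly more complete, since you also verify the inclusion $L_o\subseteq\Pi^{-1}(o)$ (including that $L_o\subseteq\Omega$) and the closure computation for $C_o$, both of which the paper leaves implicit.
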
 

\begin{proof}
If $z \in \Omega$ and $\Pi(z)=o$ then 
\begin{equation}\notag
\widetilde{\Pi}(\mathbf{z})=\mathbf{z} \overline{\eta}(\mathbf{z})+\mathbf{\overline{z}}\eta(\mathbf{z})=\left(\begin{array}{c}
z_{1}\overline{\eta}+\overline{z_{1}}\eta\\
\vdots\\
z_{n}\overline{\eta}+\overline{z_{n}}\eta\\
z_{n+1}\overline{\eta}+\overline{z_{n+1}}\eta
\end{array}\right)=o=\left(\begin{array}{c}
0\\
\vdots\\
0\\
1
\end{array}\right)
\end{equation}
where $\eta^2=z_{n+1}^2-z_1^2- \cdots -z_n^2$. We obtain the $n+1$ equations
\begin{equation} \label{eq:6}
 0=z_j \overline{\eta}+\overline{z_j}\eta \mathrm{,} \;\; 1 \leq j \leq n,  
\end{equation}
\begin{equation}\label{eq:7}
 1=z_{n+1} \overline{\eta}+\overline{z_{n+1}}\eta.
\end{equation}
We multiply the $n$ equations in \eqref{eq:6} by their corresponding $z_j\eta$ 
\begin{equation} \label{eq:8}
0=z_j \eta(z_j \overline{\eta}+\overline{z_j}\eta)=z_j^2 \lvert \eta \rvert^2+ \lvert z_j \rvert^2 \eta^2 \; \mathrm{for} \; 1 \leq j \leq n.
\end{equation}
 We define $\theta$ such that $\eta^2=\lvert \eta \rvert ^2 e^{i \theta}$, so by Equations \eqref{eq:8} 
\begin{equation}\label{eq:9}
z_j^2=- \lvert z_j \rvert^2 e^{i \theta} \;\; 1 \leq j \leq n.
\end{equation}
We substitute \eqref{eq:9} in $\eta^2$,
\begin{equation} \notag
\lvert \eta \rvert ^2 e^{i\theta}=\eta^2=z_{n+1}^2-{\displaystyle \sum_{j=1}^{n}}z_j^2=z_{n+1}^2-{\displaystyle \sum_{j=1}^{n}} \lvert z_j \rvert ^2 e^{i\theta},
\end{equation}
So, we can write $z_{n+1}^2$ as $\epsilon \lvert z_{n+1} \rvert ^2 e^{i\theta}$ for some $\epsilon \in \{-1,1\}$, and $\lvert \eta \rvert ^2=\epsilon \lvert z_{n+1} \rvert ^2+ {\displaystyle \sum_{j=1}^{n}} \lvert z_j \rvert ^2$, now by these equalities and Equation \eqref{eq:7},  
\begin{align*}
1&=(z_{n+1} \overline{\eta}+\overline{z_{n+1}}\eta)^2 =2(\epsilon +1 )\lvert z_{n+1} \rvert ^2 \lvert \eta \rvert ^2. 
\end{align*}
Therefore, $z_{n+1} \neq 0$ and $\epsilon=1$, then $z_{n+1}^2=\lvert z_{n+1} \rvert^2 e^{i \theta}$ and the quotient $z_j^2/z_{n+1}^2=-\lvert z_{j} \rvert ^2/\lvert z_{n+1} \rvert ^2  $ is negative for $1 \leq j \leq n$. Hence $z_j/z_{n+1}$ is purely imaginary for $1 \leq j \leq n$.
\end{proof}

Now we can compute the general fibre  using the fibre over $o$, in the sake of this we consider $\mathbb{H}_{\mathbb{R}}$ as the unit ball $\{[x_1: \cdots x_n:1]: x_1 ^2+ \cdots + x_n ^2-1 <0\}$ in $\mathbb{P}_{\mathbb{R}}^n$. So, we can describe, see \cite{ratcliffe1994foundations},  the coordinates $(x_1, \cdots ,x_n)$ as the hyperbolic coordinates,   
\begin{align*}
x_1 &=\mathrm{tanh}(t_1)\mathrm{cos}(t_{n})\mathrm{sin}(t_{n-1}) \cdots \mathrm{sin}(t_{2})\\
x_2 &=\mathrm{tanh}(t_1)\mathrm{sin}(t_{n})\mathrm{sin}(t_{n-1}) \cdots \mathrm{sin}(t_{2}) \\
x_3 &=\mathrm{tanh}(t_1)\mathrm{cos}(t_{n-1})\mathrm{sin}(t_{n-2}) \cdots \mathrm{sin}(t_{2}) \\
\vdots \\
x_{n-2} &=\mathrm{tanh}(t_1)\mathrm{cos}(t_4)\mathrm{sin}(t_3)\mathrm{sin}(t_2)
\\
x_{n-1} &=\mathrm{tanh}(t_1)\mathrm{cos}(t_3)\mathrm{sin}(t_2)\\
x_n &=\mathrm{tanh}(t_1)\mathrm{cos}(t_2).  
\end{align*}
 In homogeneous coordinates we obtain 
\begin{align*}
x&=[x_1: \cdots :x_n :1]\\
&=[\mathrm{tanh}(t_1)\mathrm{cos}(t_{n}){\displaystyle \prod_{k=2}^{n-1}}{\mathrm{sin}(t_{k})}: \cdots :\mathrm{tanh}(t_1)\mathrm{cos}(t_2):1],
\end{align*}
as $t_1,...,t_n$ vary, we describe the real hyperbolic space $\mathbb{H}_{\mathbb{R}}^n$ embedded in $\mathbb{H}_{\mathbb{C}}^n \subset \mathbb{P}_{\mathbb{C}}^n$. 

\begin{lemma} \label{le:14}
Consider $x=[\mathrm{tanh}(t_1)\mathrm{cos}(t_{n}){\displaystyle \prod_{k=2}^{n-1}}{\mathrm{sin}(t_{k})}: \cdots :\mathrm{tanh}(t_1)\mathrm{cos}(t_2):1]$ a point in $\mathbb{H}_{\mathbb{R}}^n$. The matrix $A$ described below lies in $\mathrm{SO_+}(n,1)$ and projectively carries $o=[0: \cdots :0:1]$ into $x$.   
\end{lemma}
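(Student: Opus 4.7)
The plan is to decompose the action according to the spherical form of the hyperbolic coordinates: factor the target point $x$ into a hyperbolic translation by $t_1$ along the $x_n$-axis followed by a spatial rotation through the angles $t_2,\ldots,t_n$ that realizes the remaining spherical direction. Accordingly, I would build $A$ as a product $A=RT$, where $T\in\mathrm{SO}_+(n,1)$ is a hyperbolic boost along the $x_n$-axis and $R\in\mathrm{SO}(n)\subset\mathrm{SO}_+(n,1)$ is the spatial rotation in question (acting by the identity on the last, time-like coordinate).

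The boost is the standard block matrix
\[
T=\begin{pmatrix} I_{n-1} & 0 & 0\\ 0 & \cosh t_1 & \sinh t_1\\ 0 & \sinh t_1 & \cosh t_1\end{pmatrix},
\]
and a direct check gives $T^\top J T=J$ with $J=\mathrm{diag}(1,\ldots,1,-1)$, together with $\det T=1$ and $(n+1,n+1)$-entry $\cosh t_1>0$, so $T\in\mathrm{SO}_+(n,1)$. Applied to $o=[0:\cdots:0:1]$, the matrix $T$ produces $[0:\cdots:0:\sinh t_1:\cosh t_1]=[0:\cdots:0:\tanh t_1:1]$, i.e.\ the point at hyperbolic distance $t_1$ from $o$ along the positive $x_n$-axis.

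For $R$, I would define it as a suitable ordered product of rotations in the coordinate planes $(x_j,x_{j+1})$ by the angles $t_{n+1-j}$, each extended by the identity on the $(n+1)$-st coordinate. Since every such planar rotation lies in $\mathrm{SO}(n)\subset\mathrm{SO}_+(n,1)$, so does $R$, and a standard induction on $n$ (mirroring the familiar formulas for spherical coordinates on $S^{n-1}\subset\mathbb{R}^n$) shows that $R$ sends the spatial unit vector $e_n=(0,\ldots,0,1,0)^\top$ to the unit vector with first $n$ entries
\[
\bigl(\cos(t_n)\prod_{k=2}^{n-1}\sin(t_k),\,\ldots,\,\cos(t_3)\sin(t_2),\,\cos(t_2)\bigr)
\]
and $(n+1)$-st entry $0$.

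Combining the two pieces, $A=RT$ lies in $\mathrm{SO}_+(n,1)$ (closed under products), and $A\cdot o=R[0:\cdots:0:\tanh t_1:1]$. Because $R$ fixes the last coordinate while rotating $e_n$ into the prescribed spherical direction, the first $n$ entries of a representative of $A\cdot o$ become $\tanh t_1$ times that spherical vector and the last entry stays equal to $1$, which is precisely the tuple of homogeneous coordinates of $x$ given in the statement. The only genuine obstacle is the inductive bookkeeping needed to certify that the particular ordered product of elementary rotations realizes the exact spherical parametrization used above: this is a standard but slightly tedious calculation, whose angle and sign conventions must be aligned with those fixed earlier in the paper.
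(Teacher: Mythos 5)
Your overall strategy (factor the isometry as a spatial rotation composed with a hyperbolic boost) is sound and does produce \emph{an} element of $\mathrm{SO}_+(n,1)$ taking $o$ to $x$; the paper itself omits the proof as a straightforward verification, so there is no official argument to compare against. However, as written your argument does not establish the statement actually claimed, which is about the \emph{specific displayed matrix} $A$: those particular columns are what get used immediately afterwards to define the adapted basis $\mathcal{B}_x$ and the fibres $L_x$ and $C_x$, so producing some other isometry with the same image of $o$ is not enough. Moreover, your matrix $RT$ is provably not the displayed $A$. Since $R$ is spatial, the bottom row of $RT$ is $e_{n+1}^{\top}RT=e_{n+1}^{\top}T=(0,\dots,0,\sinh t_1,\cosh t_1)$, whereas the bottom row of $A$ is $(\sinh t_1,0,\dots,0,\cosh t_1)$: in the paper's matrix the boosted radial direction occupies the \emph{first} column and the tangential frame vectors occupy columns $2,\dots,n$, while in $RT$ the boosted direction sits in column $n$. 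So for $t_1\neq 0$ the two matrices differ, and the discrepancy is not just the ``angle and sign conventions'' you defer.

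To close the gap you must either (i) verify directly that the displayed columns satisfy $\langle A_j,A_k\rangle=\delta_{jk}$ for $1\le j,k\le n$, $\langle A_j,A_{n+1}\rangle=0$, $\langle A_{n+1},A_{n+1}\rangle=-1$ and $\det A=1$ --- which follows from the observation that $A_2,\dots,A_n$ together with the unit vector $u=\bigl(\cos(t_n)\prod_{k=2}^{n-1}\sin(t_k),\dots,\cos(t_2),0\bigr)$ form the standard orthonormal moving frame of the spherical parametrization of $S^{n-1}$, while $A_1,A_{n+1}$ are the boost of $u$ and $e_{n+1}$ --- and then read off $A\cdot o=[A_{n+1}]=x$ from the last column; or (ii) exhibit explicitly an element $\sigma\in\mathrm{O}(n)$ fixing $e_{n+1}$ with $A=RT\sigma$ and check $\det\sigma=+1$. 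Option (ii) is not mere bookkeeping: right-multiplication by a column permutation can flip the determinant, and $\det A=+1$ is part of the assertion $A\in\mathrm{SO}_+(n,1)$, so the sign must be tracked as a function of $n$ and of the ordering of your elementary rotations.
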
 

We describe the matrix $A$ by displaying its columns. The $j$-th column $A_j$ of $A$ is given as follows:   
\begin{align*}
\small{A_1}=\left(\begin{smallmatrix}
\mathrm{cosh}(t_{1})\mathrm{cos}(t_{n})\prod\limits_{k=2}^{n-1}{\mathrm{sin}(t_{k})}\\
\mathrm{cosh}(t_{1})\mathrm{sin}(t_{n})\prod\limits_{k=2}^{n-1}{\mathrm{sin}(t_{k})}\\
\mathrm{cosh}(t_{1})\mathrm{cos}(t_{n-1})\prod\limits_{k=2}^{n-2}{\mathrm{sin}(t_{k})}\\
\vdots\\
\mathrm{cosh}(t_{1})\mathrm{cos}(t_{3})\mathrm{sin}(t_{2})\\
\mathrm{cosh}(t_{1})\mathrm{cos}(t_{2})\\
\mathrm{sinh}(t_{1}) 
\end{smallmatrix} \right), & \;
  A_{n+1}=\left(\begin{smallmatrix}
\mathrm{sinh}(t_{1})\mathrm{cos}(t_{n}) \prod\limits_{k=2}^{n-1}{\mathrm{sin}(t_{k})}\\
\mathrm{sinh}(t_{1})\mathrm{sin}(t_{n})\prod\limits_{k=2}^{n-1}{\mathrm{sin}(t_{k})}\\
\mathrm{sinh}(t_{1})\mathrm{cos}(t_{n-1})\prod\limits_{k=2}^{n-2}{\mathrm{sin}(t_{k})}\\
\vdots\\
\mathrm{sinh}(t_{1})\mathrm{cos}(t_{3})\mathrm{sin}(t_{2})\\
\mathrm{sinh}(t_{1})\mathrm{cos}(t_{2})\\
\mathrm{cosh}(t_{1})
\end{smallmatrix} \right)
\end{align*}

and

\begin{align*}
A_j=\left(\begin{smallmatrix}
\mathrm{cos}(t_{n}) \mathrm{cos}(t_{j}) \prod\limits_{k=j+1}^{n-1}{\mathrm{sin}(t_{k})}\\
\mathrm{sen}(t_{n}) \mathrm{cos}(t_{j}) \prod\limits_{k=j+1}^{n-1}{\mathrm{sin}(t_{k})}\\
\mathrm{cos}(t_{n-1}) \mathrm{cos}(t_{j}) \prod\limits_{k=j+1}^{n-2}{\mathrm{sin}(t_{k})}\\
\vdots\\
\mathrm{cos}(t_{j+2})\mathrm{sin}(t_{j+1})\mathrm{cos}(t_{j})\\
\mathrm{cos}(t_{j+1})\mathrm{cos}(t_{j})\\
\mathrm{-sin}(t_{j})\\
0\\
\vdots\\
0
\end{smallmatrix}\right) & \; \mathrm{for} \; 2 \leq j \leq n.
\end{align*}

The proof of Lemma \ref{le:14} is straightforward and we omit it. The matrix $A$ allow to translate the fibre over $o$ to the fibre over an arbitrary point in $\mathbb{H}_{\mathbb{R}}^n$ and induces the following basis 
$\mathcal{B}_x$ adapted to $x$: 

\begin{equation}\notag
 \mathcal{B}_x:=\{W_{1}=A_1/\mathrm{cosh}(t_{1}), W_{2}=A_2, \cdots,W_n=A_n, W_{n+1}=A_{n+1}/\mathrm{cosh}(t_{1})\}
\end{equation}

We have scaled  $A_1$ and $A_{n+1}$ by the positive number $1/\mathrm{cosh}(t_{1})$ as in \cite{cano2016action}. The following results follows from applying the matrix $A$ to the fibre over $o$.

\begin{lemma}\notag
If $z$ is a point in $\Omega$ such that $\Pi(z)=x$, where $x \in \mathbb{H}_{\mathbb{R}}^n$ is the point
\begin{equation}\notag
x=[\mathrm{tanh}(t_1)\mathrm{cos}(t_{n})\mathrm{sin}(t_{n-1}) \cdots \mathrm{sin}(t_{2}): \cdots :\mathrm{tanh}(t_1)\mathrm{cos}(t_2):1],
\end{equation} 
then $z$ is the image under $A$ of a point in $L_o$ and $L_x=\Pi^{-1}(x)$ is the Lagrangian space
\begin{equation}\notag
L_x=\{[iy_1W_1+\cdots +iy_nW_n+ix_{n+1}W_{n+1}]:x_{n+1},y_1,...,y_n\in \mathbb{R}, x_{n+1}\neq 0\}.
\end{equation}  
The boundary of $L_x$ is 
\begin{equation} \notag
C_x=\{[iy_1W_1+\cdots +iy_nW_n]:(y_1,...,y_n)\in \mathbb{R}^n \setminus \{0,...,0\} \}.
\end{equation}
\end{lemma}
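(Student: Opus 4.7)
The plan is to reduce the description of $L_x$ to that of $L_o$, which has already been computed, by exploiting the $\mathrm{SO}_+(n,1)$-equivariance of $\Pi$ together with Lemma \ref{le:14}. Since the matrix $A$ produced there lies in $\mathrm{SO}_+(n,1)$ and projectively carries $o$ to $x$, and the extended map $\Pi:\Omega\to\mathbb{H}_{\mathbb{R}}^n$ satisfies $\Pi(A\cdot w)=A\cdot\Pi(w)$ for every $w\in\Omega$, the preimage sets transform as $L_x=A\cdot L_o$. In particular, every $z$ with $\Pi(z)=x$ is the image under $A$ of a point in $L_o$, which is the first assertion of the lemma.

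To obtain the explicit formula, I would pick a representative $\mathbf{w}=iy_1\mathbf{e}_1+\cdots+iy_n\mathbf{e}_n+x_{n+1}\mathbf{e}_{n+1}$ of an arbitrary point of $L_o$ and apply $A$. Since $A$ is a real matrix, $A\mathbf{w}$ equals $iy_1A_1+\cdots+iy_nA_n+x_{n+1}A_{n+1}$, where $A_j$ denotes the $j$-th column of $A$. The next step is to rewrite this combination in the adapted basis $\mathcal{B}_x=\{W_1,\ldots,W_{n+1}\}$, using that $W_j=A_j$ for $2\leq j\leq n$ while $W_1$ and $W_{n+1}$ differ from $A_1$ and $A_{n+1}$ by the positive scalar $1/\cosh(t_1)$. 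After this basis change, the coefficients of $W_1,\ldots,W_n$ are purely imaginary and the coefficient of $W_{n+1}$ takes the form stipulated by the statement, up to an $\mathbb{R}$-linear bijective reparametrization of the free parameters $(y_1,\ldots,y_n,x_{n+1})$, which does not alter the underlying projective set.

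The description of the boundary $C_x$ then follows from the same argument applied to $C_o=A^{-1}\cdot C_x$, which corresponds to setting $x_{n+1}=0$ in the parametrization, together with the fact that the projective transformation induced by $A$ is a homeomorphism and therefore commutes with the boundary operation. I do not foresee any real obstacle: the whole argument is an application of equivariance plus a change of basis, and the only care needed is that the strict positivity of $\cosh(t_1)$ guarantees that the relabelling of parameters is a bijection of $\mathbb{R}^{n+1}$ onto itself (respectively of $\mathbb{R}^n\setminus\{0\}$ onto itself for the boundary).
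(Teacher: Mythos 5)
Your argument is correct and is exactly the paper's approach: the paper proves this lemma only by remarking that it ``follows from applying the matrix $A$ to the fibre over $o$,'' which is precisely the equivariance-plus-change-of-basis computation you spell out. One incidental observation: your computation yields a \emph{real} coefficient $x_{n+1}\cosh(t_1)$ of $W_{n+1}$ (consistent with the description of $L_o$, whose last homogeneous coordinate is real), which indicates that the factor $i$ in front of $x_{n+1}W_{n+1}$ in the statement is a typo rather than a defect in your proof.
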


Now, we prove the Theorem \ref{teo:uno} verifying that the projection $\Pi: \Omega \rightarrow \mathbb{H}_{\mathbb{R}}^{n}$ is an $\mathrm{SO_+}(n,1)$-equivariant smooth fibre bundle.  The result is analogous to Theorem 3.6 in \cite{cano2016action}.
\begin{proof} [Proof of Theorem \ref{teo:uno}]
Consider a point $x \in \mathbb{H}_{\mathbb{R}}^{n}$ distinct of the origin. We can find an open ball $U(x)$ centered at $x$ that does not contain the origin. So, we define $\varphi:\Pi^{-1}(U(x)) \rightarrow U(x) \times L_o$ given by $z=A_yl_o \mapsto (y,l_o)$. It is not hard to see that $\varphi$ is smooth and has smooth inverse. Also, since $\Pi$ is equivariant $\Pi(z)=\Pi(A_yl_o)=A_yo=y$, $\Pi$ agrees with the projection on the first factor of $\varphi$. For the origin, we take an open ball $U_x$ centered at a point $x \neq o$ that does not contain $o$. Then $A_x^{-1}U_x$ is an open neighborhood of $o$. We take the function $\varphi_o:\Pi^{-1}(A_x^{-1}U_x) \rightarrow A_x^{-1}U_x \times L_o$ given by $z=A_{A_x^{-1}y}l_o \mapsto (A_x^{-1}y,l_o)$, again it is not hard to see that $\varphi_o$ is smooth and has smooth inverse, also $\Pi$ agrees with the projection on the first factor of $\varphi_o$ because $\Pi(z)=\Pi(A_{A_x^{-1}y}l_o)=y$. Thus, $\Pi$ is a smooth fibre bundle with connected fibre $L_o$.  
\end{proof}

\section{A $\mathbb{R}$-circle in $\mathbb{P}_{\mathbb{C}}^{n}$}
\label{sec4}

\subsection{The projection}
\label{subsec8}
For this subsection we assume $n>2$. Consider the $\mathbb{R}$-circle $\partial\mathcal{H}_\mathbb{R}^2$ defined as the set
\begin{equation}\notag
\{[w_1:w_2:0:\cdots:0:w_{n+1}]\in \mathbb{P}_{\mathbb{C}}^{n}:w_1,w_2,w_{n+1}\in \mathbb{R}, w_1^2+w_2^2-w_{n+1}^2 =0\}.
\end{equation}

The circle $\partial\mathcal{H}_\mathbb{R}^2$ is the set of null vectors of the subspace 
\begin{equation}\notag
\mathcal{P}_\mathbb{R}^2:=\{[w_1:w_2:0:\cdots:0:w_{n+1}] \in \mathbb{P}_{\mathbb{C}}^{n}:w_1,w_2,w_{n+1}\in \mathbb{R}\}
\end{equation}
 which is a copy of $\mathbb{P}_\mathbb{R}^2$. Moreover, the circle $\partial\mathcal{H}_\mathbb{R}^2$ is the boundary of the disk 
\begin{equation}\notag
\mathcal{H}_\mathbb{R}^2:=\{w \in \mathcal{P}_\mathbb{R}^2 : w_1^2+w_2^2-w_{n+1}^2 <0\}.
\end{equation}
 We are interested again in the set obtained as union of hyperplanes 
\begin{equation}\notag
\Lambda=\bigcup_{p\in \partial\mathcal{H}_\mathbb{R}^2}H_p. 
\end{equation} 

Let $\mathbf{z}=(z_1 \cdots, z_{n+1})$ be a point in $\mathbb{C}^{n,1}$, we study the intersection of $H_{z}$ with $\mathcal{P}_\mathbb{R}^2$, where $z=[\mathbf{z}]$. If $\mathbf{z}$ has the form $(0,0,z_3,...,z_n,0)$ then $H_{z} \cap \mathcal{P}_{\mathbb{R}}^2=\mathcal{P}_{\mathbb{R}}^2$, so $z$ lies in $\Lambda$. Conversely, if $H_{z} \cap \mathcal{P}_{\mathbb{R}}^2=\mathcal{P}_{\mathbb{R}}^2$ then, it is not hard to see that, $\mathbf{z}$ must have the form $(0,0,z_3,...,z_n,0)$. In order to study the case when $\mathbf{z}$ has not the form $(0,0,z_3,...,z_n,0)$, we define the projection 
\begin{gather*} 
\widetilde{Q} : \; \mathbb{C}^{n,1} \rightarrow \mathbb{C}^{2,1}  \\
\mathbf{z}= (z_1,z_2,...,z_{n+1}) \mapsto \widetilde{Q}(\mathbf{z})=(z_1,z_2,z_{n+1}).
\end{gather*}
The kernel of the projection $\widetilde{Q}$ is the set 
\begin{equation}\notag
\widetilde{\Lambda}_0=\{(z_1,z_2,...,z_n,z_{n+1}) \in \mathbb{C}^{n,1}:z_1=z_2=z_{n+1}=0\}\cong \mathbb{C}^{n-2}. 
\end{equation}
So, the map $\widetilde{Q}$ induces a projection 
\begin{gather*}
Q: \; \mathbb{P}_\mathbb{C}^{n} \setminus [\widetilde{\Lambda}_0] \rightarrow \mathbb{P}_\mathbb{C}^{2} \\
z=[z_1:z_2:...:z_{n+1}] \mapsto Q(z)=[z_1:z_2:z_{n+1}].
\end{gather*} 
The maps $\widetilde{Q}$ and $Q$ allow to restrict this study to the case treated in \cite{cano2016action} and \cite{barrera2019chains}. We denote the set $[\widetilde{\Lambda}_0]$ by $\Lambda_0$. If $\mathbf{z}$ has not the form $(0,0,z_3,...,z_n,0)$ then we have two possibilities  
\begin{enumerate}
	\item $\widetilde{Q}(\mathbf{z})$ and $\widetilde{Q}(\mathbf{\overline{z}})$ are linearly dependent and $H_{z} \cap \mathcal{P}_{\mathbb{R}}^2$ is a projective line in $\mathcal{P}_{\mathbb{R}}^2$, or 
	\item $\widetilde{Q}(\mathbf{z})$ and $\widetilde{Q}(\mathbf{\overline{z}})$ are linearly independent and $H_{z} \cap \mathcal{P}_{\mathbb{R}}^2$ is a point in $\mathcal{P}_{\mathbb{R}}^2$.
\end{enumerate}
 
In the first case when the intersection is a projective line $l:=H_{z} \cap \mathcal{P}_{\mathbb{R}}^2$ we have  two possibilities:  
  \begin{itemize}
	\item[1.1] the projective line $l$ is a subset of $\mathcal{P}_{\mathbb{R}}^2 \setminus \overline{\mathcal{H}_{\mathbb{R}}^2}$, and $z$ lies in $\Omega$,
	\item[1.2] the projective line $l$ intersects $\overline{\mathcal{H}_{\mathbb{R}}^2}$, and  $z$ lies in $\Lambda$. 
		\end{itemize}
		
 In the second case when the intersection is a point, we have the following possibilities:   		
\begin{itemize}
	\item[2.1] the point is contained in $\mathcal{P}_{\mathbb{R}}^2 \setminus \overline{\mathcal{H}_{\mathbb{R}}^2}$, and $z$ lies in $\Omega$,
	\item[2.2] the point is contained in $\mathcal{H}_{\mathbb{R}}^2$, and $z$ lies in $\Omega$, 
	\item[2.3] the point is contained in $\partial\mathcal{H}_\mathbb{R}^2$, and $z$ lies in $\Lambda$.
		\end{itemize}
		We denote the union of the tangent lines to $\partial \mathbb{H}_{\mathbb{C}}^2$ at points of $\partial \mathbb{H}_{\mathbb{R}}^2$ in $\mathbb{P}_{\mathbb{C}}^2$ by $\Lambda_{(2)}$. The Corollary 2.5 and the Theorem 2.7 in \cite{cano2016action} give descriptions of the sets $\Lambda_{(2)}$ and $\Omega_{(2)}:=\mathbb{P}_{\mathbb{C}}^2 \setminus \Lambda_{(2)}$, respectively in terms of the function $f$ defined in the Subsection \ref{subsec3}. The descriptions are similar to those given in the proof of Theorem 2 and in Proposition \ref{te:de}.  
\begin{lemma} \label{le:lam}
The set $\Lambda$ is equal to $Q^{-1}(\Lambda_{(2)}) \cup \Lambda_0$. 
\end{lemma}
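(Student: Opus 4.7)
My plan is to first note the analog of Lemma \ref{le:la} in this setting: a point $z\in\mathbb{P}_\mathbb{C}^n$ lies in $\Lambda$ if and only if $H_z\cap\partial\mathcal{H}_\mathbb{R}^2\neq\emptyset$. This is immediate from the symmetry $w\in H_z\iff z\in H_w$ applied to the definition $\Lambda=\bigcup_{p\in\partial\mathcal{H}_\mathbb{R}^2}H_p$. The inclusion $\Lambda_0\subseteq\Lambda$ was already verified in the paragraph preceding the statement (points of the form $(0,0,z_3,\dots,z_n,0)$ satisfy $H_z\cap\mathcal{P}_\mathbb{R}^2=\mathcal{P}_\mathbb{R}^2\supseteq\partial\mathcal{H}_\mathbb{R}^2$), so the lemma reduces to showing that for $z\notin\Lambda_0$ one has $z\in\Lambda$ if and only if $Q(z)\in\Lambda_{(2)}$.

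To handle this main case, I would exploit the special form of points in $\partial\mathcal{H}_\mathbb{R}^2$. Any such point has a representative $\mathbf{w}=(w_1,w_2,0,\dots,0,w_{n+1})$ with $w_1,w_2,w_{n+1}\in\mathbb{R}$ satisfying $w_1^2+w_2^2-w_{n+1}^2=0$. Because the middle coordinates of $\mathbf{w}$ vanish and the surviving ones are real (so the conjugations disappear), the incidence relation $w\in H_z$ collapses to
\begin{equation*}
z_1 w_1+z_2 w_2-z_{n+1}w_{n+1}=0.
\end{equation*}
This is precisely the condition that $[w_1:w_2:w_{n+1}]\in H_{Q(z)}$ inside $\mathbb{P}_\mathbb{C}^2$, where $Q(z)$ is well-defined exactly because $z\notin\Lambda_0$.

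The map $[w_1:w_2:0:\cdots:0:w_{n+1}]\mapsto[w_1:w_2:w_{n+1}]$ is a bijection between $\partial\mathcal{H}_\mathbb{R}^2\subset\mathbb{P}_\mathbb{C}^n$ and the standard real circle $\partial\mathbb{H}_\mathbb{R}^2\subset\mathbb{P}_\mathbb{C}^2$, so applying Lemma \ref{le:la} in dimension $2$ yields
\begin{equation*}
z\in\Lambda\iff H_z\cap\partial\mathcal{H}_\mathbb{R}^2\neq\emptyset\iff H_{Q(z)}\cap\partial\mathbb{H}_\mathbb{R}^2\neq\emptyset\iff Q(z)\in\Lambda_{(2)}.
\end{equation*}
Combined with $\Lambda_0\subseteq\Lambda$, this gives the desired equality $\Lambda=Q^{-1}(\Lambda_{(2)})\cup\Lambda_0$.

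No single step is genuinely difficult: the proof is essentially bookkeeping, and the casework already spelled out above (cases 1.1--1.2 and 2.1--2.3) is simply a geometric reformulation of the algebraic equivalence. The only point worth treating with care is the reduction of the hyperplane condition $\langle\mathbf{w},\mathbf{z}\rangle=0$ to its three-variable form, which uses both that the middle coordinates of $\mathbf{w}$ vanish and that $w_1,w_2,w_{n+1}$ are real. Once that simplification is in place, everything else follows formally from the definitions and Lemma \ref{le:la}.
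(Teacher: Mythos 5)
Your proof is correct and follows essentially the same route as the paper: both arguments hinge on the observation that for $w\in\partial\mathcal{H}_\mathbb{R}^2$ (real surviving coordinates, vanishing middle coordinates) the incidence $z\in H_w$ descends to $Q(z)\in H_{Q(w)}$ in $\mathbb{P}_\mathbb{C}^2$, together with the already-established inclusion $\Lambda_0\subseteq\Lambda$. The only cosmetic difference is that you package the two inclusions as a single biconditional via the symmetric incidence relation of Lemma \ref{le:la}, whereas the paper proves one containment by contradiction and the other directly.
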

\begin{proof}
 Note that $\mathbb{P}_{\mathbb{C}}^n=\Lambda_0 \cup Q^{-1}(\Lambda_{(2)}) \cup Q^{-1}(\Omega_{(2)})$. Take $z=[z_1:z_2:\cdots:z_{n+1}] \in \Lambda$. Suppose that $z \in Q^{-1}(\Omega_{(2)})$, since $z \in \Lambda$ then there is a $w=[w_1:w_2:0:\cdots:0:w_{n+1}] \in \partial\mathcal{H}_\mathbb{R}^2$ such that $z$ belongs to $H_w$, the tangent hyperplane to $\partial\mathbb{H}_{\mathbb{C}}^n$ at the point $w$. So, 
\begin{equation}\notag
z_1\overline{w_1}+z_2\overline{w_2}-z_{n+1}\overline{w_{n+1}}=0
\end{equation}
this implies that $Q(z)=[z_1:z_2:z_{n+1}]$ must be in the projective line tangent to $\partial \mathbb{H}_{\mathbb{R}}^2$ at the point $Q(w)=[w_1:w_2:w_{n+1}]$, but this is impossible since $Q(z) \in \Omega_{(2)}$. Thus $z \in Q^{-1}(\Lambda_{(2)}) \cup \Lambda_0$. On the other hand, we know that $\Lambda_0 \subset \Lambda$. Now, we take $z \in Q^{-1}(\Lambda_{(2)})$, then $Q(z)$ must be in the projective line tangent to $\partial \mathbb{H}_{\mathbb{R}}^2$ at some point $w^\prime=[w^\prime_1:w^\prime_2:w^\prime_{n+1}]$, so $z$ belongs to $H_w$, where $w=[w^\prime_1:w^\prime_2:0:\cdots:0:w^\prime_{n+1}]$ lies in $Q^{-1}(w^\prime) \cap \partial\mathcal{H}_\mathbb{R}^2$. That is  $z$ belongs to $\Lambda$.    
\end{proof}
  
By the Lemma \ref{le:lam} the set $\Omega:= \mathbb{P}_{\mathbb{C}}^n \setminus \Lambda$ is equal to the set $Q^{-1}(\Omega_{(2)})$, where $\Omega_{(2)}= \mathbb{P}_{\mathbb{C}}^2 \setminus \Lambda_{(2)}$. Also Cano et al. \cite{cano2016action} show that $\Omega_{(2)}$ has three components

\begin{align*}
\Omega_{(2)}^{0}\!&\!=\!\{p\!\in\!\mathbb{P}_{\mathbb{C}}^2\!:\! H_p\!\cap \!\mathbb{P}_{\mathbb{R}}^2\!\subset\!\mathbb{P}_{\mathbb{R}}^2\!\setminus\! \overline{\mathbb{H}_{\mathbb{R}}^2}\}, \\
\Omega_{(2)}^{1-}\!&\!=\!\{p\!=\![p_1\!:\! p_2\!:\! p_3]\!\in\!\mathbb{P}_{\mathbb{C}}^2\!:\! H_p\!\cap\!\mathbb{P}_{\mathbb{R}}^2\; \mathrm{is\;a\;point\;in\;} \mathbb{H}_{\mathbb{R}}^2\; \mathrm{and}\; \Bigg\lvert \begin{matrix} \mathrm{Re}(p_1)&\!\mathrm{Re}(p_2)\\ \mathrm{Im}(p_1)&\!\mathrm{Im}(p_2) \end{matrix} \Bigg\rvert\!<\!0 \},\\
\Omega_{(2)}^{1+}\!&\!=\!\{p\!=\![p_1\!:\! p_2\!:\! p_3]\!\in\!\mathbb{P}_{\mathbb{C}}^2\!:\! H_p\!\cap\!\mathbb{P}_{\mathbb{R}}^2\; \mathrm{is\;a\;point\;in\;} \mathbb{H}_{\mathbb{R}}^2\; \mathrm{and}\; \Bigg\lvert \begin{matrix} \mathrm{Re}(p_1)&\!\mathrm{Re}(p_2)\\ \mathrm{Im}(p_1)&\!\mathrm{Im}(p_2) \end{matrix} \Bigg\rvert\!>\!0 \}.
\end{align*} 

each being diffeomorphic to an open $4$-ball. The sets $Q^{-1}(\Omega_{(2)}^{0})$, $Q^{-1}(\Omega_{(2)}^{1-})$ and $Q^{-1}(\Omega_{(2)}^{1+})$ are non-empty, open and disjoint.
 
\begin{proposition} \label{pro:7}
Let $n>2$ be an integer. The set $\mathbb{P}_{\mathbb{C}}^n \setminus \Lambda_0$ is diffeomorphic to $\mathbb{P}_{\mathbb{C}}^2 \times \mathbb{C}^{n-2}$. 
\end{proposition}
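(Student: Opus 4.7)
My plan is to exhibit $Q$ as a smooth locally trivial fiber bundle over $\mathbb{P}^2_{\mathbb{C}}$ with fiber $\mathbb{C}^{n-2}$, and then to use the contractibility of the fiber to promote this to a global smooth trivialization.

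First, I would check that the linear map $\widetilde{Q}: \mathbb{C}^{n,1} \to \mathbb{C}^{2,1}$ is surjective with kernel exactly $\widetilde{\Lambda}_0$, so that $Q$ descends to a well-defined holomorphic surjection $\mathbb{P}^n_{\mathbb{C}} \setminus \Lambda_0 \to \mathbb{P}^2_{\mathbb{C}}$. A direct computation in local affine coordinates of $\mathbb{P}^n$ shows $Q$ is a submersion, and the fiber over any $[a_1:a_2:a_3] \in \mathbb{P}^2$ consists of the classes $[z_1:\cdots:z_{n+1}]$ with $(z_1,z_2,z_{n+1}) \propto (a_1,a_2,a_3)$ and $(z_3,\ldots,z_n)$ arbitrary (subject only to the exclusion of $\Lambda_0$); once a representative of $[a]$ is chosen, this fiber is diffeomorphic to $\mathbb{C}^{n-2}$.

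Second, I would establish local triviality over the standard cover. The base $\mathbb{P}^2$ is covered by the three affine charts $V_j = \{[a_1:a_2:a_3] : a_j \ne 0\}$ for $j \in \{1,2,3\}$, and over each of these $Q$ is explicitly trivial. For instance, over $V_3 = \{a_3 \ne 0\}$, the biholomorphism is
\begin{equation}\notag
Q^{-1}(V_3) \xrightarrow{\;\cong\;} V_3 \times \mathbb{C}^{n-2},\qquad [z_1{:}\cdots{:}z_{n+1}] \longmapsto \bigl([z_1{:}z_2{:}z_{n+1}], (z_3/z_{n+1},\ldots,z_n/z_{n+1})\bigr),
\end{equation}
with analogous trivializations over $V_1$ and $V_2$; the transition maps of the fiber coordinate between any two of these charts are fiberwise multiplication by a nonvanishing ratio of base coordinates.

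Third, I would patch these three local trivializations into a single global diffeomorphism $\mathbb{P}^n_{\mathbb{C}} \setminus \Lambda_0 \to \mathbb{P}^2_{\mathbb{C}} \times \mathbb{C}^{n-2}$ using a smooth partition of unity $\{\varphi_1,\varphi_2,\varphi_3\}$ subordinate to the cover $\{V_1,V_2,V_3\}$. The main obstacle lies exactly here: the transitions act as $\mathbb{C}^*$-rescalings on the fiber $\mathbb{C}^{n-2}$ rather than as translations, so a naive convex combination of the three local fiber trivializations does not automatically yield a bijection on each fiber. To circumvent this one must exploit that $\mathbb{C}^{n-2}$ is both contractible and convex, carrying out the patching fiberwise via a smooth retraction (or by first reducing the structure group of the bundle to a contractible subgroup and then applying the partition of unity to the resulting sections). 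The verification that the globally patched map is indeed a fiber-preserving diffeomorphism onto the product is the delicate heart of the argument.
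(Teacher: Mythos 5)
Your first two steps are correct and are in fact more careful than the paper's own argument: you exhibit $Q$ as a locally trivial fibre bundle with fibre $\mathbb{C}^{n-2}$ and compute the transition functions explicitly, whereas the paper decomposes $\mathbb{P}_{\mathbb{C}}^n\setminus\Lambda_0$ into the three affine cells $\{z_1\neq 0\}$, $\{z_1=0,\,z_2\neq0\}$, $\{z_1=z_2=0,\,z_{n+1}\neq0\}$ and matches each cell with the corresponding cell of $\mathbb{P}_{\mathbb{C}}^2$ times $\mathbb{C}^{n-2}$. The trouble is your third step, and you have put your finger on exactly the point where the argument cannot be completed. The transition data you computed are scalar multiplications by ratios $a_i/a_j$ acting diagonally on $\mathbb{C}^{n-2}$; these are precisely the transition functions of the vector bundle $\mathcal{O}(1)^{\oplus(n-2)}$ over $\mathbb{P}_{\mathbb{C}}^2$, whose first Chern class is $(n-2)h\neq0$ when $n>2$. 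A smooth complex vector bundle over $\mathbb{P}_{\mathbb{C}}^2$ with nonzero Chern class admits no global trivialization, and no partition of unity can produce one: the structure group here is $\mathbb{C}^{*}$, which is homotopy equivalent to $S^{1}$ and not contractible, and $c_1$ is exactly the obstruction you would need to kill. Reducing the structure group to a contractible subgroup is impossible for the same reason.

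Worse, the failure is not merely one of method: the total space of $\mathcal{O}(1)^{\oplus(n-2)}\to\mathbb{P}_{\mathbb{C}}^2$ is not diffeomorphic to $\mathbb{P}_{\mathbb{C}}^2\times\mathbb{C}^{n-2}$ at all. Both spaces have cohomology ring $\mathbb{Z}[h]/(h^{3})$, so any diffeomorphism induces a ring isomorphism sending $h$ to $\pm h$ and hence fixing $h^{2}$; but $p_1$ of the tangent bundle is $(n+1)h^{2}$ for $\mathbb{P}_{\mathbb{C}}^n\setminus\Lambda_0$ (restrict $T\mathbb{P}_{\mathbb{C}}^n$, with $p_1=c_1^2-2c_2=(n+1)h^2$, to the complementary $\mathbb{P}_{\mathbb{C}}^2$, which is a deformation retract) and $3h^{2}$ for the product. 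For $n>2$ these disagree, so the statement you were asked to prove is itself in doubt as a claim about diffeomorphism. The paper's own cell-by-cell identification suffers from the same defect: the bijection it defines is not continuous across cell boundaries (consider $[t:1:a:0:\cdots:0:1]\to[0:1:a:0:\cdots:0:1]$ as $t\to0$, while the assigned fibre coordinate $a/t$ diverges). Note that what the later Lemmas \ref{le:17} and \ref{le:19} actually require is only local triviality of $Q$ over the contractible pieces $\Omega_{(2)}^{0}$, $\Omega_{(2)}^{1\pm}$ (and over $\Omega_{(m)}$), which your second step does establish.
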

\begin{proof}
Observe that  
\begin{equation}\notag
\mathbb{P}_{\mathbb{C}}^n\!\setminus\!\{[1\!:\! z_2\!:\!\cdots\!:\! z_{n+1}]: z_2,\!\cdots\!,\!z_{n+1}\!\in\!\mathbb{C}\}\!=\!\{[0\!:\! z_2\!:\!\cdots\!:\! z_{n+1}]: z_2,\!\cdots\!,\! z_{n+1}\!\in\!\mathbb{C}\}.
\end{equation}
and analogously
\begin{align*}
\!&\{[0\!:\! z_2\!:\!\cdots\!:\! z_{n+1}]: z_2,\!\cdots\! z_{n+1}\!\in\!\mathbb{C}\}\setminus \!\{[0\!:\! 1\!:\! z_3\!:\!\cdots\!:\! z_{n+1}]: z_3,\!\cdots\!,\! z_{n+1}\!\in\!\mathbb{C}\}=\\
&\{[0\!:\! 0\!:\! z_3\!:\!\cdots\!:\! z_{n+1}]: z_3,\!\cdots\!,\! z_{n+1}\!\in\!\mathbb{C}\}
\end{align*} 
and
\begin{align*}
\!&\{[0\!:\! 0\!:\! z_3\!:\!\cdots\!:\! z_{n+1}]: z_3,\!\cdots\!,\! z_{n+1}\!\in\!\mathbb{C}\}\setminus \!\{[0\!:\! 0\!:\! z_3\!:\!\cdots\!:\! z_{n}\!:\!1]: z_3,\!\cdots\!,\! z_{n}\!\in\!\mathbb{C}\}=\\
&\{[0\!:\! 0\!:\! z_3\!:\!\cdots\!:\! z_{n}\!:\!0]: z_3,\!\cdots\!,\! z_{n}\!\in\!\mathbb{C}\}=\Lambda_0.
\end{align*}
Thus, 
\begin{align*}
\mathbb{P}_{\mathbb{C}}^n\setminus \Lambda_0=& \\
 &\{[1\!:\! z_2\!:\!\cdots\!:\! z_{n+1}]: z_2,\!\cdots\!,\!z_{n+1}\!\in\!\mathbb{C}\} \cup \\
 &\{[0\!:\! 1\!:\! z_3\!:\!\cdots\!:\! z_{n+1}]: z_3,\!\cdots\!,\! z_{n+1}\!\in\!\mathbb{C}\} \cup \\
 &\{[0\!:\! 0\!:\! z_3\!:\!\cdots\!:\! z_{n}\!:\!1]: z_3,\!\cdots\!,\! z_{n}\!\in\!\mathbb{C}\}. 
\end{align*}
so,
\begin{align*}
\mathbb{P}_{\mathbb{C}}^n\setminus \Lambda_0 \cong& \\
 &(\{[1\!:\! z_2\!:\!0\!:\!\cdots\!:\!0\!:\! z_{n+1}]: z_2,\!z_{n+1}\!\in\!\mathbb{C}\} \cup \\
 &(\{[0\!:\! 1\!:\!0\!:\!\cdots\!:\!0\!:\! z_{n+1}]:z_{n+1}\!\in\!\mathbb{C}\} \cup \\
 &\{[0\!\cdots\!:\! 0\!:\!1]\!\}) \times \mathbb{C}^{n-2}\\ 
\cong & \; \mathbb{P}_{\mathbb{C}}^2 \times \mathbb{C}^{n-2}.
\end{align*}
\end{proof}
\begin{lemma} \label{le:17}
The four-tuples $(Q^{-1}(\Omega_{(2)}^{0}),\Omega_{(2)}^{0},Q,\mathbb{C}^{n-2})$, $(Q^{-1}(\Omega_{(2)}^{1-}),\Omega_{(2)}^{1-},Q,\mathbb{C}^{n-2})$ and $(Q^{-1}(\Omega_{(2)}^{1+}),\Omega_{(2)}^{1+},Q,\mathbb{C}^{n-2})$ are trivial fibre bundles. 
\end{lemma}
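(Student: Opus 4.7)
The plan is to invoke Proposition \ref{pro:7}, which supplies a diffeomorphism
\begin{equation}
\Phi:\mathbb{P}_{\mathbb{C}}^n\setminus\Lambda_0\longrightarrow\mathbb{P}_{\mathbb{C}}^2\times\mathbb{C}^{n-2}.
\end{equation}
The first step is to observe that the diffeomorphism constructed in the proof of that proposition is, by design, compatible with $Q$: on each of the three chart-pieces into which $\mathbb{P}_{\mathbb{C}}^n\setminus\Lambda_0$ is decomposed---namely $\{z_1\ne 0\}$, $\{z_1=0,\,z_2\ne 0\}$ and $\{z_1=z_2=0,\,z_{n+1}\ne 0\}$---the formulas defining $\Phi$ record in the first component exactly the projective triple $[z_1:z_2:z_{n+1}]$ and in the second component the remaining coordinates. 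Consequently $\pi_1\circ\Phi=Q$, where $\pi_1:\mathbb{P}_{\mathbb{C}}^2\times\mathbb{C}^{n-2}\to\mathbb{P}_{\mathbb{C}}^2$ denotes the projection onto the first factor.

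Once this compatibility is in hand the lemma follows at once. Let $\Omega_{(2)}^{\star}$ stand for any of the three components $\Omega_{(2)}^{0}$, $\Omega_{(2)}^{1-}$, $\Omega_{(2)}^{1+}$. Restricting $\Phi$ yields
\begin{equation}
\Phi\bigl(Q^{-1}(\Omega_{(2)}^{\star})\bigr)=\Omega_{(2)}^{\star}\times\mathbb{C}^{n-2},
\end{equation}
and under this restricted diffeomorphism $Q$ becomes the first-factor projection. This is precisely the definition of a trivial fibre bundle with base $\Omega_{(2)}^{\star}$, fibre $\mathbb{C}^{n-2}$ and total space $Q^{-1}(\Omega_{(2)}^{\star})$, giving the three required trivializations in a single stroke.

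The only step that requires care is the verification that the chart-by-chart identifications of Proposition \ref{pro:7} assemble into a global smooth diffeomorphism intertwining $Q$ with $\pi_1$. This is essentially forced: on the overlap of any two charts both $Q$ and the first-factor component of $\Phi$ read off the same projective triple $[z_1:z_2:z_{n+1}]$, so the transition between the two chart-wise descriptions of $\Phi$ is the identity in the first factor. No further information about the specific geometry of $\Omega_{(2)}^{0}$, $\Omega_{(2)}^{1-}$, $\Omega_{(2)}^{1+}$ is needed; the argument works uniformly for any open subset of $\mathbb{P}_{\mathbb{C}}^2$, and the division of $\Omega_{(2)}$ into the particular three components simply reflects the topological output inherited from the case $m=n=2$.
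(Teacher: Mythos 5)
Your strategy is the same one the paper gestures at: lean on the product decomposition behind Proposition \ref{pro:7} and restrict. But the step you dismiss as ``essentially forced'' is exactly the step that fails. The three pieces in the proof of Proposition \ref{pro:7} are not an open cover with overlaps; they are pairwise disjoint strata $\{z_1\neq 0\}$, $\{z_1=0,\ z_2\neq 0\}$, $\{z_1=z_2=0,\ z_{n+1}\neq 0\}$, and the piecewise-defined map is not even continuous across the smaller strata: the point $[\varepsilon:1:z_3:\cdots:z_{n+1}]$ with $\varepsilon\neq 0$ is sent to $([\varepsilon:1:z_{n+1}],(z_3/\varepsilon,\dots,z_n/\varepsilon))$, which diverges as $\varepsilon\to 0$ whenever some $z_j\neq 0$ for $3\leq j\leq n$, while the limit point $[0:1:z_3:\cdots:z_{n+1}]$ is sent to $([0:1:z_{n+1}],(z_3,\dots,z_n))$. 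If you instead use the honest open cover by the affine charts $\{z_1\neq 0\}$, $\{z_2\neq 0\}$, $\{z_{n+1}\neq 0\}$, the first factors do agree on overlaps, as you say, but the fibre coordinates transform by multiplication by $z_i/z_j$; these transition functions exhibit $Q$ as the projection of a direct sum of $n-2$ copies of a nontrivial line bundle ($\mathcal{O}(\pm 1)$) over $\mathbb{P}_{\mathbb{C}}^2$. Your closing remark that ``the argument works uniformly for any open subset of $\mathbb{P}_{\mathbb{C}}^2$'' would produce a global trivialization of $Q$ over all of $\mathbb{P}_{\mathbb{C}}^2$ commuting with the projection, which these charts do not provide. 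So the global $\Phi$ with $\pi_1\circ\Phi=Q$ that your proof starts from is not available.

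The lemma is nevertheless true, and the specific geometry of the three components --- which you declare irrelevant --- is precisely what saves it. If $p=[p_1:p_2:p_3]\in\Omega_{(2)}^{1\pm}$, the determinant condition forces $p_1\neq 0$ and $p_2\neq 0$, so $Q^{-1}(\Omega_{(2)}^{1\pm})$ lies inside the single affine chart $\{z_1\neq 0\}$; if $p_3=0$ then $[0:0:1]\in H_p\cap\overline{\mathbb{H}_{\mathbb{R}}^{2}}$, so $\Omega_{(2)}^{0}$ lies inside $\{p_3\neq 0\}$ and $Q^{-1}(\Omega_{(2)}^{0})$ inside $\{z_{n+1}\neq 0\}$. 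Over a single chart $\{z_i\neq 0\}$ the map $z\mapsto\bigl(Q(z),(z_3/z_i,\dots,z_n/z_i)\bigr)$ genuinely is a diffeomorphism commuting with $Q$, and restricting it over $\Omega_{(2)}^{\star}$ gives the required trivialization; alternatively, each $\Omega_{(2)}^{\star}$ is an open $4$-ball, hence contractible, and a vector bundle over a contractible base is trivial. The paper's own proof is terse on this point as well, but it restricts to $Q^{-1}(\Omega_{(2)}^{\star})$ \emph{before} invoking the Proposition \ref{pro:7} reasoning, which is the correct order; your version commits explicitly to the false global gluing and needs to be repaired along the lines above.
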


\begin{proof}
By reasoning similar to that given in the proof of the Proposition \ref{pro:7} the set $Q^{-1}(\Omega_{(2)}^{0})$ is diffeomorphic to $\Omega_{(2)}^{0} \times \mathbb{C}^{n-2}$, and the following diagram commutes \\   

\centerline{
\xymatrix{ 
Q^{-1}(\Omega_{(2)}^{0}) \ar[d]_{Q}  \ar[r] & \Omega_{(2)}^{0} \times \mathbb{C}^{n-2} \ar[dl]^{\mathrm{Proj}}\\
\Omega_{(2)}^{0}
} }
The other cases are treated similarly. 
\end{proof}

A consequence of the previous lemma is the connectedness of the sets $Q^{-1}(\Omega_{(2)}^{0})$, $Q^{-1}(\Omega_{(2)}^{1-})$ and $Q^{-1}(\Omega_{(2)}^{1+})$. Thus we have shown that $\Omega$ has three connected components each component diffeomorphic to an open $n$-dimensional complex ball. 

\section{The other cases}
\label{sec5}

\subsection{A real $(m-1)$-sphere in $\mathbb{P}_{\mathbb{C}}^n$}
\label{sec6}

The work \cite{cano2016action} of Cano et al. deals with the case m=n=2. The Section \ref{sec3} cover the case $m=n$ for $n>2$ and the Section \ref{sec4} the case $m=2$ for $n>2$. In this section we study the remaining cases.\\ 

We use the same notation that in the Section \ref{sec4}, that is for $2<m<n$, we write    
\begin{align*}
\mathcal{P}_\mathbb{R}^m &:=\left\{[w_1:\cdots:w_m:0:\cdots:0:w_{n+1}]\in \mathbb{P}_{\mathbb{C}}^{n}:w_k\in\mathbb{R} \mathrm{\;for\;all\;}k\right\} \\
\partial\mathcal{H}_\mathbb{R}^m &:=\left\{[w_1:\cdots:w_m:0:\cdots:0:w_{n+1}]\in \mathcal{P}_\mathbb{R}^m:\; {\displaystyle \sum_{k=1}^{m}}w_k^2-w_{n+1}^2 =0\right\} \\
\mathcal{H}_\mathbb{R}^m &:=\left\{[w_1:\cdots:w_m:0:\cdots:0:w_{n+1}]\in \mathcal{P}_\mathbb{R}^m:\; {\displaystyle \sum_{k=1}^{m}}w_k^2 -w_{n+1}^2 <0 \right\} 
\end{align*}

for the real $m$-projective space, the real $(m-1)$-sphere and the real $m$-ball in $\mathbb{P}_\mathbb{C}^n$, respectively. Again we consider the set obtained as union of hyperplanes 

\begin{equation}\notag
\Lambda=\bigcup_{p\in \partial\mathcal{H}_\mathbb{R}^m}H_p.
\end{equation}

For this purpose we examine the intersection of $H_{z}$ with $\mathcal{P}_\mathbb{R}^m$, where $\mathbf{z}$ is a point in $\mathbb{C}^{n,1}$ and $z=[\mathbf{z}]$.  The next result follows from the same arguments explained in the third paragraph of Subsection \ref{subsec8}.
 
\begin{lemma}
The intersection $H_{z} \cap \mathcal{P}_\mathbb{R}^m$ is equal to $\mathcal{P}_\mathbb{R}^m$ if and only if $\mathbf{z}$ has the form $(0,\cdots,0,z_{m+1},\cdots,z_n,0)$.
\end{lemma}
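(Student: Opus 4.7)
The plan is to unwind the definition of $H_z$ against the coordinate form of points in $\mathcal{P}_\mathbb{R}^m$ and obtain a bilinear condition that must hold for all real parameters. Recalling that $H_z$ consists of points $w$ satisfying $z_1\overline{w_1}+\cdots+z_n\overline{w_n}-z_{n+1}\overline{w_{n+1}}=0$, and that an arbitrary point of $\mathcal{P}_\mathbb{R}^m$ is of the form $w=[w_1:\cdots:w_m:0:\cdots:0:w_{n+1}]$ with real coordinates, the membership $w\in H_z$ collapses to the single real-linear equation
\begin{equation}\notag
z_1 w_1+z_2 w_2+\cdots+z_m w_m - z_{n+1} w_{n+1}=0,
\end{equation}
since the middle conjugated terms carry $w_{m+1}=\cdots=w_n=0$ and the surviving $\overline{w_k}$ equal $w_k$.

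For the \emph{if} direction, I would simply substitute $\mathbf{z}=(0,\ldots,0,z_{m+1},\ldots,z_n,0)$ into the above expression: every surviving coefficient $z_1,\ldots,z_m,z_{n+1}$ is zero, so the equation holds identically for all real $(w_1,\ldots,w_m,w_{n+1})$, giving $\mathcal{P}_\mathbb{R}^m\subseteq H_z$ and hence the required equality.

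For the \emph{only if} direction, I would exploit that the equation must be satisfied by every choice of real $(w_1,\ldots,w_m,w_{n+1})$. Testing against the points $[1:0:\cdots:0]$, $[0:1:0:\cdots:0]$, $\ldots$, $[0:\cdots:0:1:0:\cdots:0]$ (the $j$-th real standard-basis direction for $j=1,\ldots,m$) forces $z_j=0$ for each such $j$, and testing against $[0:\cdots:0:1]$ forces $z_{n+1}=0$. Consequently $\mathbf{z}$ takes the stated form. No subtlety is expected here: the argument is just the standard principle that a linear functional vanishing on a spanning set vanishes everywhere, applied to the real span of the coordinates $w_1,\ldots,w_m,w_{n+1}$. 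The only bookkeeping item to be careful with is that the coordinates of $w\in\mathcal{P}_\mathbb{R}^m$ are real, which is precisely what allows $\overline{w_k}$ to be replaced by $w_k$ and turn the Hermitian condition into a $\mathbb{C}$-linear one in the free real parameters.
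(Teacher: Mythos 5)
Your proof is correct and follows exactly the route the paper intends: the paper itself only remarks that the claim ``follows from the same arguments explained in the third paragraph of Subsection \ref{subsec8}'' (the $m=2$ case, where the converse is left as ``not hard to see''), and those arguments are precisely your reduction of $w\in H_z$ to the linear equation $z_1w_1+\cdots+z_mw_m-z_{n+1}w_{n+1}=0$ in the real coordinates, followed by testing against the standard basis directions. You have simply written out in full what the paper leaves implicit; nothing is missing.
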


We now suppose that $\mathbf{z}$ is not in the following subspace 
\begin{equation}\notag
\widetilde{\Lambda}_0:=\{\mathbf{w}=(w_1,\cdots,w_{n+1})\in \mathbb{C}^{n+1}: w_1\!=\!\cdots\!=\! w_m\!=\! w_{n+1}\!=\!0\}\cong \mathbb{C}^{n-m+1},
\end{equation} 

it is useful to consider the projection 
\begin{gather*}
\widetilde{Q}_m: \;\mathbb{C}^{n,1} \rightarrow \mathbb{C}^{m,1} \\
\mathbf{z}=(z_1,z_2,...,z_{n+1}) \mapsto \widetilde{Q}_m(\mathbf{z})=(z_1, \cdots,z_m,z_{n+1}).
\end{gather*}

 The kernel of the projection $\widetilde{Q}_m$ is the set $\widetilde{\Lambda}_0$, so the map $\widetilde{Q}_m$ induces the projection 
\begin{gather*}
Q_m : \;\mathbb{P}_\mathbb{C}^{n} \setminus [\widetilde{\Lambda}_0] \rightarrow \mathbb{P}_\mathbb{C}^{m} \\
z=[z_1,z_2,...,z_{n+1}] \mapsto Q_m(z)=[z_1,\cdots,z_m,z_{n+1}].
\end{gather*} 
Again, we write $\Lambda_0$ for $[\widetilde{\Lambda}_0]$. We can distinguish two cases 
\begin{enumerate}
	\item $\widetilde{Q}_m(\mathbf{z})$ and $\widetilde{Q}_m(\mathbf{\overline{z}})$ are linearly dependent (or equivalently $Q_m(z)$ lies in $\mathcal{P}_\mathbb{R}^m$) and $H_{z} \cap \mathcal{P}_{\mathbb{R}}^m$ is a real projective hyperplane in $\mathcal{P}_{\mathbb{R}}^m$, or 
	\item $\widetilde{Q}_m(\mathbf{z})$ and $\widetilde{Q}_m(\mathbf{\overline{z}})$ are linearly independent and $H_{z} \cap \mathcal{P}_{\mathbb{R}}^m$ is a real projective subspace of dimension $m-2$ in $\mathcal{P}_{\mathbb{R}}^m$.
\end{enumerate}

We have already dealt with the case of a real $(m-1)$-sphere in the $m$-dimensional projective space, so we can use the projection $Q_m$ as in the  Section \ref{sec4} to describe the sets $\Lambda$ and $\Omega=\mathbb{P}_{\mathbb{C}}^n \setminus \Lambda$.  We denote by $\Lambda_{(m)}$ the union of all complex hyperplanes in $\mathbb{P}_{\mathbb{C}}^m$ tangent to $\partial\mathbb{H}_{\mathbb{C}}^m$ at points in $\partial\mathbb{H}_{\mathbb{R}}^m$ and by $\Omega_{(m)}$ to $\mathbb{P}_{\mathbb{C}}^m \setminus \Lambda_{(m)}$.     

\begin{lemma} \label{le:19}
The set $\Lambda$ is equal to $Q_m^{-1}(\Lambda_{(m)}) \cup \Lambda_0$. Also, the set $\Omega$ is equal to $Q_m^{-1}(\Omega_{(m)})$ and it is connected. 
\end{lemma}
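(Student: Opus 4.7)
The plan is to mimic the proof of Lemma \ref{le:lam} verbatim, with the projection $Q$ replaced by the higher-rank projection $Q_m$. The starting point is the disjoint decomposition $\mathbb{P}_\mathbb{C}^n = \Lambda_0 \cup Q_m^{-1}(\Lambda_{(m)}) \cup Q_m^{-1}(\Omega_{(m)})$, after which the identity $\Lambda = Q_m^{-1}(\Lambda_{(m)}) \cup \Lambda_0$ reduces to two inclusions.

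For $\Lambda \subseteq Q_m^{-1}(\Lambda_{(m)}) \cup \Lambda_0$, I would take $z = [z_1:\cdots:z_{n+1}] \in \Lambda \setminus \Lambda_0$ and rule out $z \in Q_m^{-1}(\Omega_{(m)})$. By definition of $\Lambda$ there is $w = [w_1:\cdots:w_m:0:\cdots:0:w_{n+1}] \in \partial\mathcal{H}_\mathbb{R}^m$ with $z \in H_w$, which unpacks to the single equation
\[
z_1\overline{w_1}+\cdots+z_m\overline{w_m}-z_{n+1}\overline{w_{n+1}}=0;
\]
but this is precisely the condition for $Q_m(z)=[z_1:\cdots:z_m:z_{n+1}]$ to lie on the hyperplane in $\mathbb{P}_\mathbb{C}^m$ tangent to $\partial\mathbb{H}_\mathbb{C}^m$ at $Q_m(w)\in\partial\mathbb{H}_\mathbb{R}^m$, contradicting $Q_m(z)\in\Omega_{(m)}$. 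The reverse inclusion $Q_m^{-1}(\Lambda_{(m)}) \cup \Lambda_0 \subseteq \Lambda$ is immediate: $\Lambda_0 \subseteq \Lambda$ because every vector in $\widetilde{\Lambda}_0$ is orthogonal to every vector representing a point of $\partial\mathcal{H}_\mathbb{R}^m$, and if $z\in Q_m^{-1}(\Lambda_{(m)})$ and $w'=[w'_1:\cdots:w'_m:w'_{n+1}]\in\partial\mathbb{H}_\mathbb{R}^m$ has $Q_m(z)\in H_{w'}$, then the lift $w=[w'_1:\cdots:w'_m:0:\cdots:0:w'_{n+1}]$ lies in $\partial\mathcal{H}_\mathbb{R}^m$ and $z\in H_w$. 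Taking complements yields $\Omega = Q_m^{-1}(\Omega_{(m)})$.

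For connectedness of $\Omega$ I would combine two inputs. First, since $m>2$, Theorem \ref{teo:uno} provides a fibre bundle $\Pi \colon \Omega_{(m)} \to \mathbb{H}_\mathbb{R}^m$ with connected fibre $L_o$ over a connected base, so $\Omega_{(m)}$ is connected. Second, the restriction $Q_m \colon Q_m^{-1}(\Omega_{(m)}) \to \Omega_{(m)}$ is a locally trivial fibre bundle with fibre $\mathbb{C}^{n-m}$; this is the direct analogue of Proposition \ref{pro:7} and Lemma \ref{le:17}, verified chart-by-chart on the standard affine covers $\{z_k\ne 0\}$ for $k\in\{1,\ldots,m,n+1\}$ intersected with $\Omega_{(m)}$, on which $Q_m^{-1}$ splits diffeomorphically as base times $\mathbb{C}^{n-m}$ with $Q_m$ the projection onto the first factor. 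Connected fibre over connected base gives connected total space.

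The main obstacle is the bundle trivialisation step, which is mostly routine but requires patience to check that the trivialising charts cover all of $\Omega_{(m)}$; equivalently, one could avoid writing the bundle structure globally and instead argue path-connectedness directly by joining $Q_m(z)$ to $Q_m(z')$ inside $\Omega_{(m)}$, lifting the path using a local trivialisation, and then moving inside the connected fibres $\mathbb{C}^{n-m}$ over the endpoints. Everything else is a straightforward translation of the $m=2$ argument already given in Lemma \ref{le:lam}.
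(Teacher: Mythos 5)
Your proposal is correct and follows essentially the same route as the paper: the first equality is obtained by repeating the argument of Lemma \ref{le:lam} with $Q$ replaced by $Q_m$, and connectedness of $\Omega$ comes from exhibiting $Q_m \colon Q_m^{-1}(\Omega_{(m)}) \to \Omega_{(m)}$ as a (trivial) fibre bundle with fibre $\mathbb{C}^{n-m}$ over the connected base $\Omega_{(m)}$, exactly as in Lemma \ref{le:17}. You merely spell out two steps the paper leaves implicit, namely why $\Lambda_0 \subseteq \Lambda$ and why $\Omega_{(m)}$ is connected (via the bundle of Theorem \ref{teo:uno}), both of which are correct.
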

  
\begin{proof}
Arguing in the same manner as in the proof of Lemma \ref{le:lam} we have the equality $\Lambda=Q_m^{-1}(\Lambda_{(m)}) \cup \Lambda_0$, and therefore also the equality $\Omega=Q_m^{-1}(\Omega_{(m)})$. By using arguments similar to those given in the proof of Lemma \ref{le:17} it is not hard to see that the four-tuple $(Q^{-1}(\Omega_{(m)}),\Omega_{(m)},Q,\mathbb{C}^{n-m})$ is a trivial fibre bundle. We know that $\Omega_{(m)}$ is connected, so we conclude that $\Omega$ is connected.
\end{proof}

\section*{\textbf{Declarations}}
\noindent
\textbf{Acknowledgements}
E. Montiel would like to thank to UADY's Facultad de Matem\'aticas for the kindness and the facilities provided during his stay.

\textbf{Funding.} 
The research of W. Barrera and J. P. Navarrete has been supported by the CONACYT, ``Proyecto Ciencia de Frontera'' 2019--21100 via Faculty of Mathematics, UADY, M\'exico. The research of E. Montiel has been supported by the CONACYT.
\noindent
\\
\textbf{Conflict of interests} The authors declare that they have no conflict of interest.

\noindent
\textbf{Authors' contributions}
All authors have contributed equally to the paper.

\bibliographystyle{abbrv}

\end{document}